\documentclass[11pt,reqno,a4paper]{amsart}

\usepackage[margin=1.05in,footskip=0.15in,tmargin=1in,bmargin=0.85in]{geometry}
\usepackage[indent,skip=.2\baselineskip]{parskip}
\usepackage[utf8]{inputenc}
\usepackage[english]{babel}
\usepackage{amsthm,amsmath,amssymb}
\usepackage{mathtools}
\usepackage{amsfonts}
\usepackage{enumitem}
\usepackage{graphicx}
\usepackage[font=footnotesize]{caption}
\usepackage{tikz-cd}
\usepackage{array}
\usepackage{fancyhdr}

\usepackage{comment}
\usepackage{aliascnt}
 \usepackage[backend=bibtex,style=alphabetic,natbib=true,
 doi=false,isbn=false,url=false,eprint=false,maxbibnames=9]{biblatex}

\usepackage[unicode,psdextra,pdfusetitle,
bookmarks, bookmarksdepth=2, colorlinks=true,
linkcolor=blue!50!black, citecolor=teal, urlcolor=blue!50!black]{hyperref}
\usepackage[capitalise, noabbrev,nameinlink]{cleveref}

\newcommand{\PP}{\mathbb{P}}

\newcommand{\R}{\mathbb{R}}
\newcommand{\C}{\mathbb{C}}

\AtEveryBibitem{%
  \clearfield{month}%
  \iffieldundef{eprint}{}{\clearfield{year}\clearfield{date}}%
}

\newtheorem{theorem}{Theorem}[section]

\newaliascnt{proposition}{theorem}
\newtheorem{proposition}[proposition]{Proposition}
\aliascntresetthe{proposition}
\newaliascnt{lemma}{theorem}
\newtheorem{lemma}[lemma]{Lemma}
\aliascntresetthe{lemma}
\newaliascnt{corollary}{theorem}
\newtheorem{corollary}[corollary]{Corollary}
\aliascntresetthe{corollary}

\theoremstyle{definition}

\newaliascnt{definition}{theorem}
\newtheorem{definition}[definition]{Definition}
\aliascntresetthe{definition}
\newaliascnt{example}{theorem}
\newtheorem{example}[example]{Example}
\aliascntresetthe{example}
\newtheorem{example*}{Example}
\newaliascnt{remark}{theorem}
\newtheorem{remark}[remark]{Remark}
\aliascntresetthe{remark}
\newaliascnt{conjecture}{theorem}

\aliascntresetthe{conjecture}
\newaliascnt{question}{theorem}

\aliascntresetthe{question}
\newaliascnt{assumption}{theorem}

\aliascntresetthe{assumption}
\newaliascnt{notation}{theorem}

\aliascntresetthe{notation}

\crefname{theorem}{theorem}{theorems}
\crefname{construction}{construction}{constructions}
\crefname{proposition}{proposition}{propositions}
\crefname{lemma}{lemma}{lemmas}
\crefname{corollary}{corollary}{corollaries}
\crefname{definition}{definition}{definitions}
\crefname{example}{example}{examples}
\crefname{remark}{remark}{remarks}
\crefname{conjecture}{conjecture}{conjectures}
\crefname{question}{question}{questions}
\crefname{assumption}{assumption}{assumptions}
\crefname{notation}{notation}{notations}
\Crefname{theorem}{Theorem}{Theorems}
\Crefname{construction}{Construction}{Constructions}
\Crefname{proposition}{Proposition}{Propositions}
\Crefname{lemma}{Lemma}{Lemmas}
\Crefname{corollary}{Corollary}{Corollaries}
\Crefname{definition}{Definition}{Definitions}
\Crefname{example}{Example}{Examples}
\Crefname{remark}{Remark}{Remarks}
\Crefname{conjecture}{Conjecture}{Conjectures}
\Crefname{question}{Question}{Questions}
\Crefname{assumption}{Assumption}{Assumptions}
\Crefname{notation}{Notation}{Notations}

\newtheorem*{theorem*}{Theorem}
\newtheorem*{proposition*}{Proposition}
\newtheorem*{definition*}{Definition}
\newtheorem{theoremalphabetic}{Theorem}

\newenvironment{thma}
  {\setcounter{theoremalphabetic}{0}\begin{theoremalphabetic}}
  {\end{theoremalphabetic}}

\newenvironment{thmb}
  {\setcounter{theoremalphabetic}{1}\begin{theoremalphabetic}}
  {\end{theoremalphabetic}}

\newenvironment{thmc}
  {\setcounter{theoremalphabetic}{2}\begin{theoremalphabetic}}
  {\end{theoremalphabetic}}

\crefname{theoremalphabetic}{theorem}{theorems}
\Crefname{theoremalphabetic}{Theorem}{Theorems}

\DeclareMathOperator{\rank}{rank}

\DeclareMathOperator{\supp}{supp}

\DeclareMathOperator{\diag}{diag}
\DeclareMathOperator{\Res}{Res}

\DeclareMathOperator{\spann}{span}

\DeclareMathOperator{\rk}{rk}

\numberwithin{equation}{section}

\DeclareTextFontCommand{\bfemph}{\bfseries\em}

\newcommand{\RR}{\mathbb{R}}
\newcommand{\Tn}{\mathbb{T}^n}
\newcommand{\MV}{\text{MV}}
\newcommand{\ZZ}{\mathbb{Z}}

\newcommand{\Id}{\textrm{Id}}

\newcommand{\conv}{\textrm{conv}}

\newcommand{\CC}{\mathbb{C}}

\newcommand{\cl}{\text{cl}}
\newcommand{\B}{\mathcal{B}}

\DeclareMathOperator{\Trop}{Trop}

\DeclareMathOperator{\mC}{\mathcal{C}}

\DeclareMathOperator{\codim}{codim}

\title{Combinatorics of degeneracy in vertical systems}

\author{Carles Checa and Nidhi Kaihnsa}

\date{\today}

\providecommand{keywords}[1]{
  \small
  \textbf{\textit{Keywords: }} #1
}

\newcommand{\Addresses}{{%
  \bigskip
  \footnotesize
  \textsc{Carles Checa, University of Copenhagen}\par\nopagebreak
  \textit{E-mail address}, \texttt{ccn@math.ku.dk}

  \medskip

 \textsc{Nidhi Kaihnsa, University of Copenhagen}\par\nopagebreak
  \textit{E-mail address},  \texttt{nidhi@math.ku.dk}
}}

\begin{document}

\begin{abstract}
We characterize the generic consistency of vertical systems in terms of the iterated circuits of the column matroid of
the coefficient matrix. More precisely, we prove that such a system is generically consistent if
and only if there exists an iterated circuit such that the dimension of the Minkowski sum of the associated Newton polytopes is equal to the rank of the coefficient matrix. We also give an
equivalent characterization in terms of $G$-spanning flags of the dual matroid. With this, we
derive a combinatorial formula for the codimension of the locus of parameter
values for which the system has a solution, extending known results on dual
defects of toric varieties. When this locus has codimension one, its defining
equation yields a generalization of the sparse resultant to vertical systems, which applies also to $\mathcal{A}$-discriminants.
Finally, we characterize when the solution set is contained, for generic
parameter values, in a finite union of hyperplanes parallel to a coordinate
hyperplane.
\end{abstract}

\maketitle

\section{Introduction}
\label{section:1}

Polynomial systems arising in a wide range of applications are rarely dense. Very often, the data encoded by their coefficients is supported on a sparse set of monomials, allowing many geometric properties to be studied through the combinatorics of Newton polytopes. At the same time, one is frequently interested in generic properties of these systems, that is, properties that persist under sufficiently small perturbations of the data.

Notable examples exploiting sparsity include the \emph{BKK bound} \cite[Chapter 7]{coxlitosh} for enumerating nonzero complex solutions via mixed volumes, as well as formulas for computing sparse resultants \cite{gkz1994, dandrea2020cannyemiris}. However, in many parametric families appearing in different applications, the same parameters occur in several equations, so the coefficients satisfy algebraic dependencies \cite{feliu2025genericgeometrysteadystate,genericFlatness, mixeddiscriminants}, leaving room for a more detailed study of the generic geometry of those parametric families.

Khovanskii characterized sparse systems for which a generic choice of the
coefficients gives rise to a complete intersection in $(\mathbb{C}^*)^n$
\cite[Theorem 11]{Khovanskii2016NewtonPA}. Following \cite{FELIU2025630},
we call this property
\emph{generic consistency}: for general parameter values, its solution set is
nonempty and has the expected dimension. This property can be characterized
through degeneracy: a system is generically
consistent if and only if there exists an open subset of the parameter space where all solutions are nondegenerate \cite[Theorem 3.7]{FELIU2025630}.  Our purpose is to
express this property combinatorially for other parametric families of polynomial systems.

\subsection*{Vertical systems. } Our main goal is to extend Khovanskii's theorem from sparse systems with independently varying coefficients to \emph{vertically parametrized systems} (or vertical systems, for short) \cite{genericFlatness, FELIU2025630}. These are polynomial systems of the form
	\begin{equation}
		\label{verticalSystem}
		F = C \big(u \star x^B \big) \ \in \CC[u,x^{\pm}]^{s}
	\end{equation}
with parameters $u=(u_1,\dots,u_m)$ and variables $x=(x_1,\dots,x_n)$. Here, $C\in\C^{s\times m}$ has rank $s$, and the columns $b_1,\dots,b_m$ of $B\in\mathbb{Z}^{n\times m}$ are the exponent vectors of the monomials. The notation $u\star x^B$ denotes the Hadamard product: the monomial $x^{b_i}$ is scaled by the parameter $u_i$. Thus, the same parameter $u_i$ accompanies the same monomial in every equation in which that monomial occurs. We study the vanishing locus over the torus $\Tn \coloneqq (\mathbb{C}^*)^n$ for generic values of $u\in\mathbb{C}^m$:
	$$V_{\Tn}(F_u) = \{x \in \Tn : \: C(u \star x^B) = 0\} \subset \Tn$$
	where $F_u$ is the polynomial system with coefficients specialized at $u$.

    A general framework for studying the generic geometry of vertical systems was developed in \cite{FELIU2025630, feliu2025genericgeometrysteadystate} and subsequently studied in \cite{genericFlatness,helmincktropical,feliu2025toricinvarianceverticallyparametrized,feliu2026rootboundsverticalsystems, checa2026toricvectorbundlesnumber, esterov2024engineeredcompleteintersectionsslightly, esterov2026engineeredcompleteintersectionsalgorithmic, kaveh2025vectorvaluedlaurentpolynomialequations}, including the study of generic root counts and numerical pipelines to compute with them.

	Vertical systems arise naturally in applications. Many models in applied algebraic geometry involving the critical equations of certain functions contain vertical systems or can be embedded into them. This includes several types of discriminants \cite{tropicaldiscriminants, mixeddiscriminants, iterateddiscriminants}. Another important source comes from biochemistry: the equations defining the steady states of chemical reaction networks have this form \cite{feinberg-book, feliu2025genericgeometrysteadystate}. Moreover, sparse systems with independently varying coefficients form a special case of vertical systems. Thus, the study of vertical systems is a natural generalization of sparse elimination theory \cite{sturmfels94, dickenstein2002elimination, khovanskiitoroidal}, including the study of $\mathcal{A}$-discriminants \cite{cattani2022nonsplittingflagsiteratedcircuits, CURRAN2007115, gkz1994, diroccodefect}.

    Motivated by the reaction network theory, the authors of \cite{FELIU2025630} gave a linear algebra criterion for the generic consistency of $F=C(u\star x^B)$, showing that generic consistency is equivalent to the existence of $w\in\ker(C)$ such that the matrix $C\diag(w)B^{\top}$ has rank $s$. For polynomials with real coefficients, the same criterion controls the expected dimension over the real and positive zero sets \cite[Theorem 3.7]{FELIU2025630}. Our first  objective is to provide a combinatorial characterization, in terms of iterated circuits of $M(C)$ and their Newton polytopes.

    \subsection*{Resultants.} When a system is not generically consistent, a second natural problem is to study the parameter values which lead to a system with a solution. This problem can be addressed using resultants. Sparse resultants originate in the work of Gelfand, Kapranov and Zelevinsky \cite{gkz1994} and have become a central tool in computational algebraic geometry and polynomial system solving \cite{sturmfels94,cannyemiris93,dandrea2020cannyemiris,coxlitosh}. Their computation, particularly through determinantal formulas, continues to be an active area of research \cite{dandrea2020cannyemiris,dandrea2025sparse,dandrea2002,cannypedersen,mantzaflaris:hal-01528377,checaemiris,bender2021koszultype,cannyemiris93}.

	Geometrically, the resultant is obtained from the incidence variety by projecting onto the coefficient space. For $n+1$ sparse polynomials in $n$ variables, Sturmfels described the codimension of this variety in terms of the dimensions of the Newton polytopes \cite[Theorem 1.1]{sturmfels94}. If its codimension equals one, the projection is defined by the sparse resultant. Our second objective is to derive such an analogue of this codimension formula for systems with vertical dependencies.

	\subsection*{Main results}
	Iterated circuits have appeared previously in the study of
$\mathcal{A}$-discriminants and the defectivity of toric varieties
\cite{esterov2010newton, 10.1007/s10801-018-0816-4,
cattani2022nonsplittingflagsiteratedcircuits}. Our first main contribution is
to relate the iterated circuits of the column matroid of $C$, together with
their associated Newton polytopes, to the generic consistency of vertical systems $F=C(u\star x^B)$. Recall that a circuit of $M(C)$ is a minimally dependent set of columns. An
\emph{iterated circuit} of $M(C)$ is a sequence
$(\mathcal{C}_1,\dots,\mathcal{C}_k)$ of subsets of $[m]$ such that
for each $i=0,\dots,k-1$, the set $\mathcal{C}_{i+1}$ is a circuit
    of the contraction of $M(C)$ by $\bigcup_{j=1}^i\mathcal{C}_j$. For every $X\subseteq[m]$, let
\[
    \Delta_X\coloneqq
    \operatorname{conv}\{b_j:j\in X\}\subseteq\mathbb{R}^n
\]
be the Newton polytope of the corresponding exponent vectors. We also give a formulation of this criterion in terms of its Gale dual system, by introducing \emph{$G$-spanning flags} (\Cref{def: nonsplitting_flag}), which generalize the non-splitting flags introduced in \cite{CURRAN2007115, cattani2022nonsplittingflagsiteratedcircuits}.

\begin{thma}
    (\Cref{thm:mainTheorem} and \Cref{thm: GspanningFlag})
    \label{thm:mainIntro}
    Let $C\in\mathbb{C}^{s\times m}$ have rank $s$, and let
    $B\in\mathbb{Z}^{n\times m}$. The following are equivalent:
    \begin{enumerate}[label=(\roman*)]
        \item The vertical system
        $F=C(u\star x^B)$ is generically consistent.
        \item There exists an iterated circuit
        $(\mathcal{C}_1,\dots,\mathcal{C}_k)$ of $M(C)$ such that
        \begin{equation}
            \label{eq: sumDELTAS}
            \dim\left(
                \sum_{j=1}^k\Delta_{\mathcal{C}_j}
            \right)=s.
        \end{equation}
        \item There exists a $G$-spanning flag of $M(C)^*$ of length $m-s$.
    \end{enumerate}
\end{thma}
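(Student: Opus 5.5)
The plan is to start from the linear-algebra criterion of \cite[Theorem 3.7]{FELIU2025630}. It says that $F$ is generically consistent if and only if there is $w\in\ker(C)$ with $\rank\bigl(C\diag(w)B^{\top}\bigr)=s$. Since the rank condition is open, this is the same as asking that the generic $w\in\ker C$ achieves rank $s$. The whole argument is then a matroidal analysis of the generic rank of the matrix-valued linear map $w\mapsto C\diag(w)B^{\top}$ on $\ker C$. Note that $C\diag(w)B^\top=\sum_j w_j\, c_j b_j^\top$, and it always has rank at most $s$.

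For (ii)$\Rightarrow$(i), I will construct $w$ as a perturbation series along the iterated circuit. Each circuit $\mathcal{C}_{i}$ of the contraction $M(C)/\bigcup_{j<i}\mathcal{C}_j$ lifts to a vector $w^{(i)}\in\ker C$ supported on $\mathcal{C}_1\cup\dots\cup\mathcal{C}_i$ whose restriction to $\mathcal{C}_i$ has full support. I set $w=\sum_i t^{i}w^{(i)}$, with $t$ small (or with a valuation $t^{\lambda_i}$, $\lambda_1\ll\dots\ll\lambda_k$). I then compute the leading behaviour of the minors of $C\diag(w)B^\top$ by a flag/filtration argument. Let $V_i=\spann\{c_j:j\in\mathcal{C}_1\cup\dots\cup\mathcal{C}_i\}$, and pass to the successive quotients $V_i/V_{i-1}$. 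Modulo $V_{i-1}$, the block $\sum_{j\in\mathcal{C}_i}w^{(i)}_j\bar c_j b_j^\top$ has rank $\min\bigl(\dim V_i/V_{i-1},\ \dim\Delta_{\mathcal{C}_i}\text{-contribution}\bigr)$. The relation $\sum w^{(i)}_j\bar c_j=0$ means this block only sees differences $b_j-b_{j'}$, which is exactly the affine span of $\Delta_{\mathcal{C}_i}$. The equality $\dim\sum\Delta_{\mathcal{C}_j}=s$ will then be used in a block-triangular determinant computation, together with genericity of the lifts. Here $|\mathcal{C}_i|-1=\dim V_i/V_{i-1}$, and the circuits together span $\C^s$ once the sum has dimension $s$, because the sum having dimension $s$ forces $\sum(|\mathcal{C}_i|-1)\ge s$. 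The aim is to show that some $s\times s$ minor has nonzero leading coefficient. The subtlety is that the rows coming from different levels must combine to span an $s$-dimensional space of exponent differences, and not just levelwise. For this I will use a Rado/matroid-union type argument: choose independent difference vectors greedily across levels, which is possible exactly when the Minkowski sum is $s$-dimensional.

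For (i)$\Rightarrow$(ii), I will take a generic $w\in\ker C$ attaining rank $s$ and decompose its support. The support of a generic kernel vector is the union of all circuits, and I peel off circuits one at a time. At each step I choose a circuit of the current contraction that increases $\dim\sum\Delta$, or else stop. If the maximal achievable dimension $d$ over all iterated circuits were less than $s$, I would produce a rank bound $\rank C\diag(w)B^\top\le d$. The tool here is a Rado-type min--max formula: the generic rank of $\sum_j w_j c_jb_j^\top$ on $\ker C$ is a minimum over flats and projections. Showing that this minimum equals the maximum over iterated circuits of $\dim\sum\Delta_{\mathcal{C}_j}$ is the main obstacle. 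I expect to handle it by induction on $m$, contracting a circuit $\mathcal{C}_1$ and applying the induction hypothesis to the quotient system, with $B$ replaced by its projection modulo $\operatorname{aff}\Delta_{\mathcal{C}_1}$.

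The equivalence (ii)$\Leftrightarrow$(iii) should be pure matroid duality, adapting \cite{cattani2022nonsplittingflagsiteratedcircuits}. Under dualization, contracting a circuit of $M(C)$ corresponds to deleting a cocircuit of $M(C)^*$, i.e.\ passing to a flat. An iterated circuit therefore corresponds to a flag of flats of $M(C)^*$, and the dimension count of the Minkowski sum translates into the $G$-spanning condition of \Cref{def: nonsplitting_flag}. The length $m-s=\rank M(C)^*$ corresponds to reaching dimension $s$. I would verify this step by step by unwinding the definitions in a bijection.
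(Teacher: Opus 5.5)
The gap is in (i)$\Rightarrow$(ii). You call it the main obstacle yourself and leave it to an unstated ``Rado-type min--max formula''. Your inductive skeleton matches the paper's: contract a circuit $\mathcal{C}_1$, replace $B$ by its projection modulo $L_{\mathcal{C}_1}$, and recurse. But it only closes if you can \emph{choose} $\mathcal{C}_1$ so that two things hold: $\dim\Delta_{\mathcal{C}_1}=r_C(\mathcal{C}_1)$, and the projected contracted system $F_{\mathcal{C}_1}$ is still generically consistent. Nothing in your plan produces such a circuit, and a greedy choice fails. In \Cref{example: pyramid}, the circuit $\{1,3,4,5\}$ raises the dimension to $2$, but contracting it leaves only the loop $2$. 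You are then stuck at $2<3$, although $(\{1,2,4\},\{3,5\})$ reaches $3$.

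The missing ingredient is \Cref{lemma: derivativeCircuit}. A nonzero maximal minor $P$ of $C\diag(w)B^{\top}$ is a form of degree $s=m-\ell$ on $\ker C$. Suppose $\partial_c^{|\supp(c)|-1}P=0$ for every circuit vector $c$. Restricting to the hyperplanes $T_j=\{w_j=0\}$ and inducting on $\ell$ shows that $P$ is divisible by a product of coordinate forms of total degree $m>s$, so $P=0$. Hence some circuit $\mathcal{C}$ of rank $r$, with circuit vector $c$, has $\partial_c^rP\neq0$. After row-reducing $C$ with respect to $\mathcal{C}$, the coefficient of $t^r$ in $P(w+tc)$ is a maximal minor of the matrix that stacks $C\diag(c)B^{\top}$ (rows in $L_{\mathcal{C}}$) over $C_{\mathcal{C}}\diag(w')\widetilde{B}^{\top}$. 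Its nonvanishing forces both $\dim\Delta_{\mathcal{C}}=r$ and $\rank C_{\mathcal{C}}\diag(w')B_{\mathcal{C}}^{\top}=s-r$ (\Cref{prop: contractedCircuit}). Only with this does your induction go through.

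Your (ii)$\Rightarrow$(i) is fine and is the paper's argument in another guise. Take $w^{(i)}$ to be the fundamental circuit vectors of the basis $\bigcup_i(\mathcal{C}_i\setminus\{j_i\})$. Then your lowest-order term in $t$ is exactly the coefficient of $\prod_i\lambda_i^{d_i}$ in $q(\lambda)$ that the paper isolates via block-triangularity. No Rado or greedy step is needed there. Since $\sum_i(|\mathcal{C}_i|-1)=r_C(\bigcup_i\mathcal{C}_i)\le s$ and these differences span an $s$-dimensional space, they are automatically independent (\Cref{lemma:simplices}).

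For (ii)$\Leftrightarrow$(iii), the bijection between iterated circuits and flags is \Cref{prop: dualityIteratedFlags}. Translating the dimension condition, however, is not mere unwinding of definitions. Let $U$ be the space of vectors supported on $\bigcup_i\mathcal{C}_i$ with zero sum on each $\mathcal{C}_i$. You need the identity $\dim\sum_i\Delta_{\mathcal{C}_i}=r_C(\bigcup_i\mathcal{C}_i)-\dim(U\cap\ker\overline{B})$. You also need $\ker\overline{B}=\operatorname{im}G$, which identifies $U\cap\ker\overline{B}$ with $\{y:\langle s_i,y\rangle=0\text{ for all }i\}$. Finally, length $m-s$ encodes $r_C(\bigcup_i\mathcal{C}_i)=s$, whereas dimension $s$ is encoded by the spanning condition \eqref{eq: Gspanning}; your last sentence conflates the two.
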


In
particular, the existence of such a flag is equivalent to the generic consistency of the Gale dual system (\Cref{cor: GaleDual}).  In the case where the matroid of the columns of $C$ coincides with the matroid of the columns of $\overline{B}$, our notion of $G$-spanning flag coincides with the notion of non-splitting flag in \cite{CURRAN2007115, cattani2022nonsplittingflagsiteratedcircuits}. \Cref{ex: GaleDual} illustrates how generic consistency of the Gale dual system can be read from a flag of flats.

For square systems, \Cref{thm:mainIntro} also provides a combinatorial positivity criterion for the generic number of nonzero complex solutions. We show that the existence of an iterated circuit satisfying \eqref{eq: sumDELTAS} is equivalent to the positivity of the root-count formulas of \cite{genericFlatness,kaveh2025vectorvaluedlaurentpolynomialequations} (\Cref{thm: equivalentConditions}).

Unlike in the sparse case, one cannot independently rescale the equations of a vertical system. We prove, however, that the parameters may be projectivized separately along the connected components $\mathcal{B}_1,\dots,\mathcal{B }_c \subset [m]$ of $M(C)$ (\Cref{thm: connectedComponentsInvariant}). Let $\mathbb{P}^{\mathcal B} \coloneqq \mathbb{P}^{\mathcal B_1} \times \dots \times \mathbb{P}^{\mathcal B_c}$ and denote the resulting parameter space, let $\mathcal W_F\subseteq\Tn\times\mathbb{P}^{\mathcal B}$ be the incidence variety, and let $\pi$ be its projection onto $\mathbb{P}^{\mathcal B}$. We also set
\[
    \iota(F)\coloneqq
    \max_{(\mathcal C_1,\dots,\mathcal C_k)}
    \dim\left(\sum_{j=1}^k\Delta_{\mathcal C_j}\right),
\]
where the maximum is taken over the iterated circuits of $M(C)$. Finally, let $F_{(i)}$ denote the vertical system associated with contraction by the $i$-th column (\eqref{eq: systemFI}). The following theorem extends \cite[Theorem 1.1]{sturmfels94} from sparse systems with independent coefficients to vertical systems, as well as the characterization of when the projection of the incidence variety has codimension one.

	\begin{thmb}
		\label{thm:codimensionIntro}(\Cref{thm:mainCodimension} and \Cref{thm: codimensionOne})
		Let $C \in \mathbb{C}^{s \times m}$ be of rank $s$, $B \in \mathbb{Z}^{n \times m}$ and assume that $\mathcal{W}_F \neq \varnothing$.
		\begin{enumerate}[label=(\roman*)]
        \item The codimension of $\overline{\pi(\mathcal{W}_F)}$ equals $s - \iota(F)$.
			\item If $F$ is not generically consistent, then the codimension of $\overline{\pi(\mathcal{W}_F)}$ equals $1$ if and only if there exists $i \in [m]$ such that $F_{(i)}$ is generically consistent.
		\end{enumerate}
	\end{thmb}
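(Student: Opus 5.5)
Both parts reduce to a fiber-dimension count for $\pi$, combined with \Cref{thm:mainIntro} and the rank criterion of \cite{FELIU2025630}.

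\textbf{Part (i).} Set $\mathcal W_F^{\mathrm{aff}}=\{(x,u)\in\Tn\times\CC^m: C(u\star x^B)=0\}$. For fixed $x$ the fiber is the linear space $\{u: u\star x^B\in\ker C\}$, because $u\mapsto u\star x^B$ is a linear isomorphism. Hence $\mathcal W_F^{\mathrm{aff}}\cong\Tn\times\ker(C)$ is irreducible of dimension $n+m-s$. Projectivizing along the components $\mathcal B_1,\dots,\mathcal B_c$, which \Cref{thm: connectedComponentsInvariant} allows, subtracts $c$ from both source and target. So the projectivization does not affect the codimension, and I work affinely.

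The codimension of $\overline{\pi(\mathcal W_F)}$ is $m-(n+m-s-\delta)=s-(n-\delta)$, where $\delta$ is the generic fiber dimension of $\pi$. At a generic point $(x,u)$ with $w=u\star x^B\in\ker C$ generic, the fiber $V_{\Tn}(F_u)$ has tangent space $\ker\big(C\diag(u\star x^B)B^\top\diag(x)^{-1}\big)$. Its dimension is $n-\rank\big(C\diag(w)B^\top\big)$, and generic smoothness in characteristic zero shows this equals $\delta$. Therefore
\[
\operatorname{codim}\overline{\pi(\mathcal W_F)}=s-\max_{w\in\ker C}\rank\big(C\diag(w)B^\top\big).
\]
It remains to show that $r(F):=\max_{w\in\ker C}\rank(C\diag(w)B^\top)$ equals $\iota(F)$. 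This is the refined, rank-valued form of the equivalence (i)$\Leftrightarrow$(ii) in \Cref{thm:mainIntro}. I would reuse that proof, tracking the rank rather than only testing whether it equals $s$.

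Order the circuits along an iterated circuit. Choose $w$ as a sum $\sum_j\epsilon^{j} w^{(j)}$, where each $w^{(j)}$ is supported on $\mathcal C_1\cup\dots\cup\mathcal C_j$ and restricts to the circuit vector of $\mathcal C_j$ in the contraction. A block-triangular leading-term argument then shows $\rank\ge\dim\sum_j\Delta_{\mathcal C_j}$. The reason is that each circuit $\mathcal C_j$ contributes the image of $C\diag(w^{(j)})B^\top$, which spans the translation directions of $\Delta_{\mathcal C_j}$ modulo the earlier ones; here one uses $\sum_{i\in\mathcal C_j}w_i=0$ in the appropriate quotient.

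For the converse, take the lexicographically generic $w\in\ker C$ and peel off circuits from its support. The main obstacle is making this upper bound tight. I would first try to prove it by induction on $m$ via contraction, relating $C\diag(w)B^\top$ to the matrix of $F_{(i)}$. Alternatively, one can apply the theorem's proof to a restricted system: project $B$ onto the rank-$r$ quotient and pass to a subsystem with $r$ independent rows.

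\textbf{Part (ii).} By (i), the codimension is $1$ exactly when $\iota(F)=s-1$. The system $F_{(i)}$ has $s-1$ equations when column $i$ is nonzero, and its iterated circuits are the tails of iterated circuits of $M(C)$ beginning with $\mathcal C$ such that $i\in\mathcal C$. I would show $\iota(F)=s-1$ if and only if some $F_{(i)}$ satisfies condition (ii) of \Cref{thm:mainIntro} with value $s-1$.

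Given such an $i$, prepend a circuit through $i$; this keeps the dimension at least $s-1$, and since $F$ is not generically consistent the maximum is at most $s-1$. Conversely, if $\iota(F)=s-1$ is attained by $(\mathcal C_1,\dots,\mathcal C_k)$, then the rank of the contraction drops by $|\mathcal C_1|-1$. Contracting one element $i\in\mathcal C_1$ at a time, I would pick $i$ so that the remaining polytopes still span dimension $s-1$. The delicate point is that $F_{(i)}$ uses the exponent data shifted by $b_i$; I would handle this by matching it against the definition \eqref{eq: systemFI}.
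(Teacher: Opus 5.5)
Your reduction of (i) to the generic rank of $C\diag(w)B^{\top}$ on $\ker(C)$ is the paper's. Your degeneration $w=\sum_j\epsilon^jw^{(j)}$ is a valid proof of $\rank C\diag(w)B^{\top}\ge\iota(F)$, and more direct than the paper's route through \Cref{lemma: mC}: after rescaling the $j$-th block of rows by $\epsilon^{-j}$, the limit has row space $\sum_jL_{\mathcal C_j}$, and rank is lower semicontinuous.

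The genuine gap is the reverse inequality $\rank C\diag(w)B^{\top}\le\iota(F)$ for all $w\in\ker(C)$. This is the substantive half of (i), and the forward direction of (ii) depends on it. You only list strategies, and the two you name fail.
\begin{itemize}
\item Tracking the rank through the proof of \Cref{thm:mainTheorem} breaks at the first step. \Cref{prop: contractedCircuit} rests on \Cref{lemma: derivativeCircuit}, which needs $\deg P\ge m-\ell=s$. A nonzero $\rho$-minor with $\rho<s$ has degree $\rho$. If every circuit has more than $\rho+1$ elements, all $\partial_c^{|\supp(c)|-1}P$ vanish trivially, so the lemma gives nothing.
\item Restricting to $r$ independent rows $C'$ of $C$ changes the matroid. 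Since $\ker(C')\supsetneq\ker(C)$, columns can become loops and smaller circuits appear. So \Cref{thm:mainTheorem} applied to $C'$ returns iterated circuits of $M(C')$, not of $M(C)$. Projecting $B$ can only lower the rank.
\end{itemize}

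The paper inducts on $m$, in two cases.
\begin{itemize}
\item Elements whose coordinate functionals on $\ker(C)$ are proportional (\Cref{lemm: sameCircuits}) are merged into one column after projecting along $L_{\mathcal A}$, and one shows $\iota(\widehat F)\le\iota(F)-\dim\Delta_{\mathcal A}$.
\item Otherwise the $L_j$ are pairwise non-proportional. Induction on the deletions makes every $(\iota(F)+1)$-minor vanish on each $T_j=\{w_j=0\}$. Hence the minor is divisible by $\prod_jL_j$, of degree $m>s\ge\iota(F)+1$, so it is zero.
\end{itemize}

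Within your framework there is also a short fix: augment rather than restrict. Take a full-support $w_0\in\ker(C)$ of rank $r$. Append $s-r$ generic integer rows $\epsilon_t$ to $B$ so that the vectors $C(w_0\star\epsilon_t)$ complete the column space; this is possible because $\epsilon\mapsto C(w_0\star\epsilon)$ is onto. The new system has the same matroid $M(C)$ and is generically consistent. So \Cref{thm:mainTheorem} gives an iterated circuit of $M(C)$ whose Minkowski sum has dimension $s$ in $\mathbb{R}^{n+s-r}$. Forgetting the $s-r$ new coordinates leaves dimension at least $r$.

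Part (ii) also needs repair.
\begin{itemize}
\item If $i\in\mathcal C_1$, the tail $(\mathcal C_2,\dots,\mathcal C_k)$ is an iterated circuit of $M(\mathcal C_1)$, not of $M(\{i\})$. For the same reason, prepending a circuit through $i$ to an iterated circuit of $M(\{i\})$ does not produce one of $M(C)$.
\item The correct lift adjoins $i$ to the first $\mathcal C'_t$ with $i\in\cl(\mathcal C'_1\cup\dots\cup\mathcal C'_t)$ (closure in $M(C)$), leaving the other terms unchanged. If no such $t$ exists, the sequence is already an iterated circuit of $M(C)$. Alternatively, use the paper's argument of solving the one extra equation for $u_i$.
\item There is no shift by $b_i$: $L_{\{i\}}=0$, so $F_{(i)}$ simply deletes column $i$.
\item In the forward direction, your choice $i\in\mathcal C_1$ is unjustified. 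The paper first extends the iterated circuit to full rank, using that there are no coloops. It then picks $j_t$ and $K_t$ with $\sum_t|K_t|=s-1$. It contracts the unique element of the basis $\bigcup_t(\mathcal C_t\setminus\{j_t\})$ outside $\bigcup_tK_t$, which may lie in a later $\mathcal C_{t_0}$.
\end{itemize}
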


	Thus, iterated circuits determine not merely whether a generic system has a solution, but the precise number of independent conditions imposed on the parameter space by the existence of a solution. If $\overline{\pi(\mathcal{W}_F)}$ has codimension one, it is defined by a unique homogeneous irreducible polynomial $\Res_F\in\CC[u]$, which we call the \emph{resultant of the vertical system}. We further determine which parameters can occur in $\Res_F$ in terms of generically consistent contractions (\Cref{corollary: parametersThatAppear}). This resultant can be computed in some simpler cases (\Cref{prop: sparseResultant} and \Cref{prop: lowcodimensionComputation}), but we leave more general formulas for further research.

    When the vertical system encodes the critical points of a fewnomial hypersurface, it has the form
    \begin{equation}
    \label{eq: FBintro}F_B = \overline{B}(u \star x^B)
    \end{equation}
    where the columns of $B \in \mathbb{Z}^{n \times m}$ encode the exponents of the monomials defining the hypersurface and $\overline{B} \in \mathbb{Z}^{(n+1) \times m}$ is formed from $B$ by inserting a row of ones at the top. In this case, the $\mathcal{A}$-discriminant corresponds to the resultant of \eqref{eq: FBintro}. We show that \Cref{thm:codimensionIntro} generalizes previous results stated in terms of  \textit{non-splitting flags} (see \Cref{def: nonsplitting}) \cite{cattani2022nonsplittingflagsiteratedcircuits, CURRAN2007115, diroccodefect}, which study when the locus of coefficients that give a fewnomial hypersurface with nonzero complex critical points has codimension one (\Cref{thm: dualDefects}).

As another application of the results above, we study when the variety $V_{\Tn}(F_u)$ is contained in a finite number of parallel hyperplanes in $\mathbb{C}^n$.
We say that a vertical system $F = C(u \star x^B)$ is \emph{linearly degenerate} with respect to $x_i$ if there exist $a_1,\dots,a_k \in \mathbb{C}^*$ such that $V_{\Tn}(F_{u}) \subset \bigcup_{j = 1}^k \{x_i = a_j\}$ for generic values of $u \in \mathbb{C}^m$. When restricted to positive $a_j$ and positive parameter values, this property is closely related to \emph{absolute concentration robustness} \cite{feliu2025genericgeometrysteadystate}, which arises in chemical reaction network theory, as well as to other properties such as sensitivity \cite{localglobal,precision,Kaihnsa_Nguyen_Shiu_2024}. In the case of a single possible value, it implies that the concentration corresponding to the $i$-th species remains invariant at every positive steady state for generic values of the parameters.

The work \cite{feliu2025genericgeometrysteadystate} relates linear degeneracy to the elimination ideal obtained by retaining only $x_i$ and the parameters. We combine this perspective with Theorems~\ref{thm:mainIntro} and~\ref{thm:codimensionIntro}. Let $\tau_i:\mathbb{R}^n\to\mathbb{R}^{n-1}$ be the coordinate projection that forgets the $i$-th coordinate.

\begin{thmc}(\Cref{thm: linearDegeneracy})
\label{thm: linearDegeneracyIntro}
Let $F = C(u \star x^B)$ be a generically consistent vertical system with $C$ being rank $s$. The following are equivalent:
    \begin{enumerate}[label=(\roman*)]
        \item The vertically parametrized family $F = C(u \star x^B)$ is linearly degenerate with respect to $x_i$.
        \item For every iterated circuit $(\mathcal{C}_1,\dots,\mathcal{C}_k)$ of $M(C)$,
        \[
            \dim\left(\sum_{j=1}^k\tau_i(\Delta_{\mathcal{C}_j})\right)<s.
        \]
    \item The polynomial $R_i$ defined in \eqref{eq:Ri} is nonzero and generates the elimination ideal $\langle F \rangle \cap \CC(u)[x_i^{\pm}]$.
    \end{enumerate}
\end{thmc}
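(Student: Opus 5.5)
Throughout I fix $i$, write $\pi_i\colon \Tn\times\mathbb{C}^m\to \mathbb{C}^*\times\mathbb{C}^m$ for the map $(x,u)\mapsto(x_i,u)$, and take $R_i$ to be the generator of the elimination ideal in \eqref{eq:Ri}. The idea is to recast linear degeneracy as a dimension statement about the projection of the incidence variety, and then compute that dimension with \Cref{thm:mainIntro} and \Cref{thm:codimensionIntro}.

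\textbf{Step 1: (i)$\iff$(iii).} This step is essentially algebraic. If $R_i\neq0$ generates $\langle F\rangle\cap\CC(u)[x_i^{\pm}]$, then for generic $u$ the coordinate $x_i$ of every point of $V_{\Tn}(F_u)$ is a root of $R_i(u,x_i)$. This gives finitely many values $a_j$, so (iii)$\Rightarrow$(i) holds. For the converse I would follow \cite{feliu2025genericgeometrysteadystate}. The image $\pi_i(V)$ of the incidence variety is irreducible, since $V$ is irreducible for a generically consistent vertical system (it is parametrised by $x$ and a linear space). Linear degeneracy says $\pi_i(V)$ is a hypersurface, cut out by a single irreducible polynomial. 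That polynomial generates the elimination ideal once we know $\langle F\rangle$ is generically radical along the nondegenerate locus, which I take from \cite[Theorem 3.7]{FELIU2025630}.

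\textbf{Step 2: the key reformulation.} Dehomogenise in $x_i$. After a toric change of coordinates I treat $x_i$ as an additional parameter, setting $x_i=t$. Then $F$ becomes a vertical system $\tilde F$ in the $n-1$ remaining variables, with exponent matrix $\tau_i(B)$. Its parameters are $u$ together with $t$, and $t$ enters only through the rescaling $u_j\mapsto u_j t^{b_{ij}}$.

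Condition (i) says that for generic $u$ only finitely many $t$ give a solution. Equivalently, the closure of the image of the incidence variety of $F$ in $(t,u)$-space has codimension exactly $1$.

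Now consider the system $\tau_i F\coloneqq C(u\star y^{\tau_i(B)})$ in the variables $y\in(\mathbb{C}^*)^{n-1}$. Because $t$ can be absorbed into $u$, this image coincides up to the monomial change $(t,u)\mapsto(t,\,u\star t^{B_i})$, which is a dominant map. Here $B_i$ denotes the $i$-th row of $B$. The comparison shows that (i) is equivalent to $\tau_i F$ not being generically consistent.

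\textbf{Step 3: conclusion via \Cref{thm:mainIntro}.} Apply \Cref{thm:mainIntro} to $\tau_i F$, which has the same matroid $M(C)$ and Newton polytopes $\tau_i(\Delta_{\mathcal C_j})$. It says $\tau_i F$ fails to be generically consistent exactly when every iterated circuit satisfies $\dim\sum_j\tau_i(\Delta_{\mathcal C_j})<s$, which is (ii).

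\textbf{Main obstacle.} The delicate point is the dimension bookkeeping in Step 2. Generic consistency of $F$ tells us $\pi_i(V)$ dominates $u$-space, but we must rule out $x_i$ being free on $V_{\Tn}(F_u)$ while $\tau_iF$ is still inconsistent. I would handle this with \Cref{thm:codimensionIntro}(i) applied to $\tau_i F$. From that codimension formula, $\iota(\tau_iF)\ge s-1$, because projecting $\sum_j\Delta_{\mathcal C_j}$ lowers its dimension by at most one. Combining this with generic consistency of $F$, the fibre dimension in $t$ is $0$ precisely when $\iota(\tau_iF)<s$.
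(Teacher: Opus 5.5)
Your argument for (i)$\iff$(ii) is correct, and it is more self-contained than the paper's. The paper simply quotes \cite[Theorem~4.4]{feliu2025genericgeometrysteadystate} for ``linearly degenerate in $x_i$ $\iff$ $\widehat F_i$ generically inconsistent''. You prove that equivalence directly with the change of coordinates $\Psi(t,u)=\big(t,(u_jt^{b_{ij}})_j\big)$. This is an automorphism of $\CC^*\times\CC^m$, not merely a dominant map, and it identifies the image $\pi_i(V)$ of the incidence variety of $F$ with $\CC^*\times\pi(\mathcal W_i)$, where $\mathcal W_i$ is the incidence variety of your $\tau_iF$ (the paper's $\widehat F_i$). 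Applying \Cref{thm:mainTheorem} to $\widehat F_i$ then gives (ii), exactly as in the paper. The isomorphism already settles your ``main obstacle'', so \Cref{thm:mainCodimension} is not needed there.

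The genuine gap is in (iii). You take $R_i$ to be ``the generator of the elimination ideal''. But \eqref{eq:Ri} defines $R_i$ as the specific polynomial $\Res_{\widehat F_i}(u_1x_i^{b_{i1}},\dots,u_mx_i^{b_{im}})$, with the convention $R_i=0$ when $\codim\overline{\pi(\mathcal W_i)}>1$. Under your reading (iii) is nearly empty, since $\CC(u)[x_i^{\pm}]$ is a principal ideal domain. Your Step~1 only shows that the elimination ideal is generated by the irreducible equation of the hypersurface $\overline{\pi_i(V)}$; it never touches the hidden-variable resultant. For (i)$\Rightarrow$(iii) you still need two things:
\begin{enumerate}
\item[(a)] $\Res_{\widehat F_i}$ exists, i.e.\ $\codim\overline{\pi(\mathcal W_i)}$ is exactly $1$, so that $R_i\neq 0$.
\item[(b)] $R_i$ is, up to a unit, that irreducible equation.
\end{enumerate}
Both follow from what you already have.
\begin{itemize}
\item For (a): $\mathcal W_i\neq\varnothing$ because $F$ is generically consistent. \Cref{thm:mainTheorem} gives an iterated circuit with $\dim\sum_j\Delta_{\mathcal C_j}=s$. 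Since $\tau_i$ has a one-dimensional kernel, $\iota(\widehat F_i)\ge s-1$, so \Cref{thm:mainCodimension} gives codimension at most one. Inconsistency of $\widehat F_i$ gives codimension at least one. This is where your last paragraph belongs.
\item For (b): $\overline{\pi_i(V)}=\Psi^{-1}\big(\CC^*\times V(\Res_{\widehat F_i})\big)=V(R_i)$. Because $\Psi$ is an automorphism, $R_i=\Res_{\widehat F_i}\circ\Psi$ is irreducible in $\CC[x_i^{\pm},u]$, so it agrees with your generator up to a unit. This replaces the paper's comparison of zero sets and squarefreeness of $R_i$ and $g$.
\end{itemize}
For (iii)$\Rightarrow$(i), your argument goes through unchanged once $R_i$ is the polynomial of \eqref{eq:Ri}.
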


The polynomial $R_i$ is defined by evaluating the resultant of the system after hiding the variable $x_i$. We show in examples that this resultant can be used to compute the values of $x_i$ when a vertical system is linearly degenerate with respect to $x_i$ (\Cref{ex: reactionNetwork}).

\subsection*{Structure of the paper.} In \Cref{section:2}, we recall some background on matroids needed
for our main results and relate it to basic properties of vertical
systems. We review generic consistency and projected systems and Gale dual systems. In \Cref{section:3}, we prove our main theorem
(\Cref{thm:mainTheorem}), which characterizes generic consistency in
terms of iterated circuits, and give an equivalent formulation using
$G$-spanning flags. We also establish the equivalence between
$G$-spanning and non-splitting flags under suitable hypotheses and provide a positivity criterion for the generic root counts of vertical systems.
In \Cref{section:4}, we prove the codimension formula
(\Cref{thm:mainCodimension}) for the closure of the locus of parameters
for which the system has a solution in $\Tn$. We recover known results on
the defectivity of toric varieties and define the resultant of a
vertical system when this closure is a hypersurface. Finally, we
compute this resultant in some special cases and use our criterion
to characterize linear degeneracy (\Cref{thm: linearDegeneracy}).

\subsection*{Acknowledgments}
This project has been funded by the European Union under the Grant Agreement number 101044561, POSALG. Views and opinions expressed are those of the authors only and do not necessarily reflect those of the European Union or European Research Council (ERC). Claude Opus 5 was used to address a gap in the proof of \Cref{thm:mainCodimension} and to assist in verifying the remaining proofs. All mathematical content was conceived and formulated by the authors, who take full responsibility for it.

The following notation will be used throughout the paper:

\begin{enumerate}[label=(\roman*)]
    \item $\Tn$ will denote the $n$-dimensional complex torus, i.e. $\Tn \coloneqq (\mathbb{C}^*)^n$.
    \item Given two vectors $u,v \in \CC^m$, $u \star v \in \mathbb{C}^m$ denotes the Hadamard product of the two vectors, i.e. the $i$-th entry equals $u_iv_i$.
    \item For any vector $w \in \mathbb{C}^m$, $\diag(w) \in \mathbb{C}^{m \times m}$ denotes the diagonal matrix formed with the entries of $w$.
    \item We denote by $\mathbf{1} \in \mathbb{R}^m$ the vector consisting of all ones.
    \item For $s$ vectors $v_1,\dots,v_s \in \mathbb{C}^n$, we denote by
	 $$\bigwedge_{i = 1}^s v_i = v_1 \wedge \dots \wedge v_{s} \in \wedge^{s}\mathbb{C}^n,$$
     their wedge product.
    \item We denote the ground set $[m] \coloneqq \{1,\dots,m\}$. For each subset $J \subset [m]$, we denote by $|J|$ its cardinality.
    \item If $J \subset [m]$, $\mathbb{P}^{J}$ denotes the $(|J| - 1)$-dimensional projective space over $\mathbb{C}$.
\end{enumerate}

\section{Combinatorics of vertical systems}
\label{section:2}

In this section, we define vertical systems and their properties and their relation to matroids. Moreover, we define iterated circuits and describe some of their properties with respect to the Newton polytopes of the equations of the system.

\subsection{Matroids} We briefly recall basic definitions for \textit{representable matroids} and for a detailed exposition, the readers are referred to \cite{oxley}. Given a matrix $C \in \mathbb{C}^{s \times m}$, let $M(C)$ be the matroid generated by the columns of $C$ with the rank function $$r_C(X) = \dim \spann \langle c_i  : \: i \in X\rangle.$$
\begin{enumerate}[label=(\roman*)]
     \item  A \emph{basis} of $M(C)$ is an independent set of maximal rank.
     \item
     A \textit{circuit} of $M(C)$ is a minimally dependent subset $\mathcal{C}\subset [m]$ of columns of $C$.

     \item A \emph{loop} is an element of $M(C)$ of rank $0$.
     \item A \emph{flat} $E$ is a subset $E \subset [m]$ such that
	 $r_{C}(E \cup \{x\}) > r_{C}(E)$
	 for all $x \notin E$. Given any set $J \subset [m]$, the closure of $J$, denoted by $\cl(J)$, is the smallest flat containing it.

     \item Given a subset $X\subset [m]$, the \textit{contraction} of $M(C)$ with respect to $X$, denoted by $M(X)$, is the matroid in the set of columns given by $[m]\setminus X$ with the rank function:
	 \begin{equation}
     \label{eq: rankFunction}
     r_{X}(Y) \coloneqq r_{C}(X \cup Y) - r_C(X).\end{equation}

We can describe a matrix representing the contraction \(M(X)\) as follows.
Let \(I\subseteq X\) be an independent set of maximal rank inside $X$. After reordering the columns and applying elementary row
operations, we may write \(C\) in row-reduced echelon form as
\[
\begin{pmatrix}
    \Id_r & 0 & P\\
    0 & \Id_{s-r} & \widetilde{Q}
\end{pmatrix},
\]
where the first \(r\) columns correspond to \(I\). Since \(I\) has maximal
rank in \(X\), the columns indexed by \(X\setminus I\) become loops after
contracting \(I\), i.e. they are $0$ columns in $\widetilde{Q}$. Thus, deleting the first \(r\) rows and the columns
indexed by \(X\) yields a matrix representing \(M(X)\):
\begin{equation}
\label{equation:contraction}
    C_X\coloneqq
    \begin{pmatrix}
        \Id_{s-r} & Q
    \end{pmatrix},
\end{equation}
where \(Q\) is obtained from \(\widetilde{Q}\) by deleting the
columns indexed by \(X\setminus I\).

 \item The \textit{restriction} of $M(C)$ to a subset $Y \subset [m]$, denoted by $M(C)_{\mid Y}$, is defined by restricting the rank function to $Y$. A matrix representing $M(C)_{\mid Y}$ can be computed by deleting the columns of $C$ that are not in $Y$.
      \item The \textit{dual matroid} $M(C)^*$ is the matroid with rank function
	 \begin{equation}
	    \label{rank:dual}
        r_C^*(X) = |X| + r_C([m] \setminus X) - s.
	 \end{equation}
     If $D \in \mathbb{C}^{m \times \ell}$ is a matrix whose columns generate the kernel of $C$, the dual matroid of $M(C)$ is represented by the rows of $D$, i.e. $M(C)^* = M(D)$, where $M(D)$ denotes the matroid of the rows of $D$.  By \cite[Proposition 3.1.6]{oxley}, for any subset $X \subset [m]$
     \begin{equation}
     \label{eq: contractionRestriction}
     M(X)^* = M(D)_{\mid [m] \setminus X}.
     \end{equation}
  A \textit{coloop} is a loop of the dual matroid.
     \item Two elements $e,e' \in [m]$ are connected in $M(C)$ if there exists a circuit $\mathcal{C} \subset [m]$ such that $e,e' \in \mathcal{C}$. This defines an equivalence relation whose equivalence classes are the connected components of $M(C)$.
     \end{enumerate}

Throughout, we denote by $\mathcal{B}_1,\dots,\mathcal{B}_c \subset [m]$ the connected components of $M(C)$. To find $\mathcal{B}_i$, we rearrange the columns of $C$, so that the row-reduced form of $C$ equals
	 \begin{equation}
	 	\label{eq: rref}
	 	C = \begin{pmatrix} \Id_s & P \end{pmatrix}
	 \end{equation}
	 for some matrix $P \in \mathbb{C}^{s \times \ell}$ where $\ell \coloneqq m - s$. Consider $\mathcal{G}_P = (V_P,E_P)$ to be the bipartite graph induced by the matrix $P$, where the set of vertices $[m] = [1,\dots,s,s+1,\dots,s+\ell]$ corresponds to the sets of columns and rows of $P$ and the edges correspond to the pairs of rows $i \in [s]$ and columns $j \in [\ell]$ such that $P_{ij} \neq 0$.

	 \begin{proposition}
	 	\label[proposition]{lemma: connectedComponents}
	 	The connected components of $\mathcal{G}_P$ (as a graph) are in correspondence with the connected components of $M(C)$.
	 \end{proposition}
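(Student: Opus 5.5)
The plan is to use fundamental circuits with respect to the basis $[s]$. With $C=(\Id_s\;P)$ as in \eqref{eq: rref}, the columns $1,\dots,s$ form a basis $I=[s]$ of $M(C)$. For each non-basis element $s+j$ with $j\in[\ell]$, the fundamental circuit is
\[
\mathcal{C}(s+j,I)=\{s+j\}\cup\{i\in[s]: P_{ij}\neq 0\}.
\]
This holds because the column $s+j$ equals $\sum_i P_{ij}e_i$, and that expression in terms of the basis is unique. So $\mathcal{C}(s+j,I)$ consists exactly of the vertex $s+j$ of $\mathcal{G}_P$ together with its neighbours. I will also use the standard fact (Oxley, Prop.~4.1.2 / Thm.~on connectivity via fundamental circuits): two elements lie in the same connected component of a matroid if and only if they are joined by a chain of circuits that pairwise intersect. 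Connectivity being an equivalence relation is already asserted in the paper.

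\textbf{Step 1: edges imply matroid connectivity.} If $\{i,s+j\}$ is an edge of $\mathcal{G}_P$, then $P_{ij}\neq 0$. Hence $i,s+j\in\mathcal{C}(s+j,I)$, so $i$ and $s+j$ are connected in $M(C)$. By transitivity, each connected component of $\mathcal{G}_P$ lies inside a single connected component of $M(C)$.

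\textbf{Step 2: the converse inclusion.} Let $V_1,\dots,V_t$ be the vertex sets of the components of $\mathcal{G}_P$. Write $R_a=V_a\cap[s]$ and $K_a=V_a\setminus[s]$. After permuting, $P$ is block diagonal with blocks indexed by $R_a\times K_a$, because $P_{ij}=0$ whenever $i$ and $s+j$ lie in different components. Then the column space decomposes as
\[
\mathbb{C}^s=\bigoplus_a \mathbb{C}^{R_a},
\]
and every column indexed by $V_a$ lies in $\mathbb{C}^{R_a}$. This gives
\[
r_C(X)=\sum_a r_C(X\cap V_a)\quad\text{for all } X\subseteq[m].
\]
So $M(C)$ is the direct sum of its restrictions to the $V_a$. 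Now suppose a circuit $\mathcal{C}$ met two different $V_a$. A linear dependency among the columns of $\mathcal{C}$ splits into components in each $\mathbb{C}^{R_a}$. Each component is itself a dependency supported on $\mathcal{C}\cap V_a$, and at least one of them is nontrivial. That nontrivial piece is supported on a proper subset of $\mathcal{C}$, contradicting minimality. Hence every circuit lies inside one $V_a$, and elements of different $V_a$ are never connected.

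\textbf{Step 3: edge cases.} Isolated vertices need a separate check. An isolated row vertex $i$ has a zero row of $P$, so it is a coloop. An isolated column vertex $s+j$ has a zero column of $P$, so it is a loop. In either case it forms a singleton component in both senses, consistent with the convention that every element is connected to itself.

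The main obstacle is Step 2, specifically making the direct-sum splitting rigorous. Once the splitting of $\mathbb{C}^s$ according to the components is set up, the circuit argument is routine.
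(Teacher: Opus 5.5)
Your proof is correct, and your Step 1 is exactly the paper's proof. The paper uses the fundamental circuit $\{i' \in [s] : P_{i'j}\neq 0\}\cup\{s+j\}$ to connect $i$ and $s+j$ whenever $P_{ij}\neq 0$, invokes transitivity of matroid connectivity (Oxley, Proposition 4.1.2), and then simply asserts that the components coincide.

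That argument only shows that each component of $\mathcal{G}_P$ lies inside a single component of $M(C)$. Your Step 2 supplies the reverse inclusion, which the paper leaves implicit. The splitting $\mathbb{C}^s=\bigoplus_a \mathbb{C}^{R_a}$ forces every circuit to lie in one $V_a$, so elements of different graph components never share a circuit. Your write-up is therefore the more complete of the two.

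Step 3 is already covered by Step 2, since an isolated vertex is just a one-element $V_a$, possibly with $R_a=\varnothing$.
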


	 \begin{proof}
Let $i \in [s]$ and $j \in [\ell]$.	If $P_{ij} \neq 0$, then the $i$-th and $(s+j)$-th columns of $C$ are connected in $M(C)$, by considering the circuit $\mathcal{C} = \{i' \in [s] : \: P_{i'j} \neq 0\} \cup \{s + j\}$. As the connectivity relation is transitive \cite[Proposition 4.1.2]{oxley}, the connected components of $G_P$ equal the connected components of $M(C)$.
\end{proof}

In particular, the connected components of $G_P$ do not depend on the choice of the row-reduced echelon form \eqref{eq: rref} and the matrix $C$ has a block structure determined by each of the connected components of $G_P$. Finally, we introduce the notion of iterated circuits, which has been used in \cite{esterov2010newton, cattani2022nonsplittingflagsiteratedcircuits}, and will play an essential role in our characterization of degeneracy.
	 \begin{definition}
	 	\label[definition]{def: iteratedCircuits}
	 	An \emph{iterated circuit} of $M(C)$ is a sequence $(\mathcal{C}_1,\dots,\mathcal{C}_k)$ of subsets $\mathcal{C}_i \subset [m]$ such that:
	 	\begin{enumerate}[label=(\roman*)]
	 		\item $|\mathcal{C}_i| \geq 2$ for each $i = 1,\dots,k$,
	 		\item $\mathcal{C}_{i+1}$ is a circuit of the contraction $M(X_i)$, where $X_i = \bigcup_{j = 1}^i\mathcal{C}_j$ for $i = 0,\dots,k-1$.
	 	\end{enumerate}
	 \end{definition}

	 \begin{remark}
	 	The fact that $\mathcal{C}_i \cap \mathcal{C}_{i'} = \varnothing$  for all $i \neq i'$ follows from the definition. Each of the sets $\mathcal{C}_i$ is contained in a unique connected component of $M(C)$, and several of them may lie in the same component. For each connected component $\mathcal{B}$ of $M(C)$, the subsequence of those $\mathcal{C}_i$ contained in $\mathcal{B}$ is an iterated circuit of $M(C)_{\mid \mathcal{B}}$.
	 \end{remark}

 The dual object of an iterated circuit in $M(C)$ is a flag of flats in the dual matroid.

     \begin{proposition}
    \label[proposition]{prop: dualityIteratedFlags}
  Let $C \in \mathbb{C}^{s \times m}$ be a matrix of rank $s$ and let
$D \in \mathbb{C}^{m \times \ell}$ be a Gale dual matrix.  The tuple
$(\mathcal{C}_1,\dots,\mathcal{C}_k)$ for $\mathcal{C}_i \subset [m]$ is an iterated circuit of $M(C)$ after removing the singleton elements if and only if the chain
$$E_{\ell - k} \subset \dots \subset E_{\ell} = [m], \quad
E_{i} \coloneqq [m] \setminus \bigcup_{j = 1}^{\ell - i}\mathcal{C}_j$$
is a flag of flats in $M(D)$ with $r_D(E_i) = i$ for all
$i = \ell - k,\dots,\ell$. Moreover,
$$r_C\big(\bigcup_{i = 1}^k\mathcal{C}_i\big) = s \iff
E_{\ell - k} \text{ is independent in } M(D).$$
     \end{proposition}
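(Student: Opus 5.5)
The plan is to reduce everything to the standard duality between contraction in $M(C)$ and restriction in $M(D)$, namely \eqref{eq: contractionRestriction}. Throughout, $M(D)$ has rank $\ell=m-s$, and we use the dual rank formula \eqref{rank:dual}. Set $X_i=\bigcup_{j\le i}\mathcal{C}_j$, so that $E_{\ell-i}=[m]\setminus X_i$. The singleton caveat will be handled by noting that a singleton circuit of a contraction is a loop of $M(X_i)$. Such a loop lies in $\cl_C(X_i)$ and is excluded by condition (i) of \Cref{def: iteratedCircuits}; on the flag side it corresponds to a complement that fails to be a flat. I will first treat the statement for sequences with all $|\mathcal{C}_i|\ge 2$ and then remark on how singletons are discarded.

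\textbf{Step 1 (flats and rank).} Show that a set $E=[m]\setminus X$ is a flat of $M(D)$ with $r_D(E)=\ell-i$ if and only if $X$ is a union of circuits of $M(C)$ with $|X|-r_C(X)=i$. By \eqref{rank:dual}, $r_D(E)=|E|+r_C(X)-s=\ell-(|X|-r_C(X))$, which gives the rank statement at once. For the flat condition, $E$ is a flat of $M(D)$ if and only if adding any $x\in X$ raises the rank. That is $r_D(E\cup x)=r_D(E)+1$, which by \eqref{rank:dual} is equivalent to $r_C(X\setminus x)=r_C(X)$. This says every $x\in X$ lies in a circuit contained in $X$, i.e. $X$ has no coloops in $M(C)_{|X}$, i.e. $X$ is a union of circuits.

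\textbf{Step 2 (induction along the chain).} Given an iterated circuit, argue by induction on $i$ that $X_i$ is a union of circuits of $M(C)$ with nullity $|X_i|-r_C(X_i)=i$. Since $\mathcal{C}_{i+1}$ is a circuit of $M(X_i)$, we have $r_{X_i}(\mathcal{C}_{i+1})=|\mathcal{C}_{i+1}|-1$, so the nullity increases by exactly one. Every $x\in\mathcal{C}_{i+1}$ satisfies $r_C(X_{i+1}\setminus x)=r_C(X_{i+1})$, because a circuit of the contraction lifts to a circuit of $M(C)$ inside $X_i\cup\mathcal{C}_{i+1}$ containing $x$. Step 1 then gives that the $E_i$ are flats with the stated ranks, and they are nested by construction.

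For the converse, suppose the flags are given with ranks dropping by one. Then $X_{i+1}\setminus X_i$ has nullity exactly one in $M(X_i)$. This nullity computation holds because the complement of $E_{\ell-i-1}$ is a union of circuits, so each of its elements lies in a circuit contained in it. Hence $M(X_i)$ restricted to $\mathcal{C}_{i+1}$ is a connected rank-deficient set of nullity one with no coloops, so it is a single circuit. The absence of coloops comes from the flat property via Step 1 applied inside the contraction, using \eqref{eq: contractionRestriction}.

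\textbf{Step 3 (the final equivalence).} We have $r_D(E_{\ell-k})=\ell-k$, and by \eqref{rank:dual} $|E_{\ell-k}|=m-|X_k|$. Hence $E_{\ell-k}$ is independent exactly when $m-|X_k|=\ell-k$, i.e. $|X_k|-k=s$, i.e. $r_C(X_k)=s$, using the nullity formula $r_C(X_k)=|X_k|-k$ from Step 2.

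The main obstacle is the converse in Step 2, namely showing that a nullity-one difference between consecutive coloop-free sets is a single circuit of the contraction rather than a circuit plus extra elements. Here I would use the flat property of $E_{\ell-i-1}$ to exclude coloops of $M(X_i)_{|\mathcal{C}_{i+1}}$. The point is that a coloop-free matroid of nullity one is a circuit, possibly together with loops. Those loops are precisely the singletons removed in the statement, and this is where the phrase ``after removing the singleton elements'' enters.
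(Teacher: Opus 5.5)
\textbf{Main argument.} Your Steps 1--3 are correct, but the route differs from the paper's. The paper argues one step of the chain at a time. By \eqref{eq: contractionRestriction}, $M(D)_{\mid E_{\ell-i+1}}$ is the dual of $M(X_{i-1})$, and hyperplanes of a matroid are exactly the complements of circuits of its dual \cite[Proposition 2.1.6]{oxley}. Hence $E_{\ell-i}$ is a hyperplane of $M(D)_{\mid E_{\ell-i+1}}$, and so a flat of $M(D)$ of rank $\ell-i$, if and only if $\mathcal{C}_i$ is a circuit of $M(X_{i-1})$. The last claim is read off from $r_D(E_{\ell-k})=|E_{\ell-k}|+r_C(X_k)-s$, with no nullity bookkeeping.

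You instead prove, directly from \eqref{rank:dual}, the global description: flats of $M(D)$ of rank $\ell-i$ are the complements of unions of circuits of $M(C)$ of nullity $i$. You then recover the step-wise circuit condition from the fact that a coloop-free matroid of nullity one is a single circuit. Your version is more self-contained and gives the explicit formula $r_C(X_k)=|X_k|-k$. The paper's version is shorter because it imports the hyperplane--circuit correspondence.

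\textbf{Singleton caveat.} Your treatment of the singleton caveat is wrong in two places, and the two explanations contradict each other.

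First, a singleton step $\mathcal{C}_{i+1}=\{x\}$ with $x$ a loop of $M(X_i)$ does \emph{not} produce a non-flat. Since $x\in\cl(X_i)$, your own Step 1 shows that $X_i\cup\{x\}$ is a union of circuits of nullity $i+1$. So $E_{\ell-i}\setminus\{x\}$ is a flat of rank $\ell-i-1$; dually, $x$ is a coloop of $M(D)_{\mid E_{\ell-i}}$. For example, take columns $c_1=c_2=e_1$, $c_3=e_2$, $c_4=e_1+e_2$ in $\mathbb{C}^2$. The flag of flats $\varnothing\subset\{2\}\subset[4]$ of $M(D)$ yields $(\{1,3,4\},\{2\})$. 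This is precisely why the statement needs the caveat.

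Second, a coloop-free matroid of nullity one is exactly one circuit, never ``a circuit possibly together with loops''. A loop is itself a circuit, so a circuit of size at least two plus a loop has nullity two. Singletons therefore occur as whole steps, not alongside a larger circuit.

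The fix is simple. Run Step 2 for sequences in which each $\mathcal{C}_{i+1}$ is a circuit of $M(X_i)$ of any size. Both directions go through verbatim, since $r_{X_i}(\mathcal{C}_{i+1})=|\mathcal{C}_{i+1}|-1$ also holds when $\mathcal{C}_{i+1}$ is a loop. Then note that discarding the singleton steps leaves an iterated circuit. Contracting an element of $\cl(X_i)$ amounts to deleting it, so the later contractions have the same circuits on the remaining elements. Neither you nor the paper spells out this last point.
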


    \begin{proof}
    We know that $E$ is a flat of rank $\ell - 1$ in $M(D)$ (also called a hyperplane) if and only if $[m] \setminus E$ is a circuit in $M(C)$ \cite[Proposition 2.1.6(iii)]{oxley}. By \eqref{eq: contractionRestriction}, $E_{\ell - i}$ is a hyperplane in $M(D)_{\mid E_{\ell -i + 1}}$ (in particular, a flat in $M(D)$ of rank $\ell - i$) for $i = 1,\dots,k$ if and only if $\mathcal{C}_{i} = E_{\ell - i + 1} \setminus E_{\ell - i}$ is a circuit in $M(\bigcup_{j = 1}^{i-1} \mathcal{C}_j)$ for $i = 1,\dots,k$. Therefore, $E_{\ell - k} \subset \dots \subset E_{\ell}$ is a chain of flats in $M(D)$ if and only if $(\mathcal{C}_1,\dots,\mathcal{C}_k)$ is an iterated circuit of $M(C)$, up to removing the sets with one element. Using \eqref{rank:dual} the rank of $E_{\ell - k}$ in $M(D)$ equals
    $$r_D(E_{\ell - k}) = \big|E_{\ell - k}\big| + r_C\big(\bigcup_{i = 1}^{k} \mathcal{C}_{i}\big) - s.$$
    Therefore, $E_{\ell - k}$ is independent in $M(D)$ if and only if $r_C\big(\bigcup_{i = 1}^k \mathcal{C}_{i}\big) = s$.
    \end{proof}

 \subsection{Vertical systems and generic consistency}

We consider a vertical system as in \eqref{verticalSystem}, i.e. a parametric polynomial system of the form
	 \[ F =  C (u \star x^B) \in \CC[u,x^\pm]^s \]
	 with variables $x=(x_1,\dots,x_n)$ and parameters in the ring $\mathbb{C}[u_1,\dots,u_m]$. Throughout the article,
	 $B\in \ZZ^{n\times m}$ is a matrix whose columns we denote by $b_1,\dots,b_m$  and $C\in \CC^{s\times m}$ is a matrix of rank $s$, whose columns we denote by $c_1,\dots,c_m \in \mathbb{C}^s$.

     By $F_{u}$ we refer to the specialization of $F$ to a given $u \in \CC^m$ and we denote
     $$V_{\Tn}(F_u) \coloneqq \{x \in \Tn: \: F_u(x) = 0\}.$$

The partition of $M(C)$ can be related to the invariance of the polynomial system under an action of the torus \cite{feliu2025toricinvarianceverticallyparametrized}. In addition, the graph $G_P$ in \Cref{lemma: connectedComponents} has been used in \cite{checa2025effectivecriterionmultiplepositive} in the context of positive solutions of vertically parametrized polynomial systems, as well as in the context of Gale duality of point configurations \cite{rodriguezpajares2025combinatorialdescriptionselfassociatedset}. In particular, the following modifications can be performed in a given specialization $F_u$ of a vertical system $F = C(u \star x^B)$, without changing its geometry.

	 \begin{proposition}
	 	\label[proposition]{thm: connectedComponentsInvariant}
	 	Let $F = C(u \star x ^B)$ be a vertical system and let $\mathcal{B} \subset [m]$ be a connected component of $M(C)$.
	 	\begin{enumerate}[label=(\roman*)]    \item Let $u \in \mathbb{C}^m$ and $\lambda \in \mathbb{C}^*$. Consider $u' \in \mathbb{C}^m$ given by
	 		$$u'_i \coloneqq \begin{cases}
	 			\lambda u_i & i \in \mathcal{B} \\
	 			u_i & i \notin \mathcal{B}
	 		\end{cases},$$
	 		Then, $V_{\Tn}(F_{u'}) = V_{\Tn}(F_{u})$.
	 		\item Consider $a \in \mathbb{Z}^n$ and the vertical system $F' = C(u \star x^{B'})$ for some matrix $B' \in \mathbb{Z}^{n \times m}$ whose columns are
	 		$$b_i' = \begin{cases}
	 			b_i + a & i \in \mathcal{B} \\
	 			b_i  & i \notin \mathcal{B}
	 		\end{cases}.$$
	 		Then, $V_{\Tn}(F_{u}) = V_{\Tn}(F'_{u})$ for all $u \in \mathbb{C}^m$.
	 	\end{enumerate}
	 \end{proposition}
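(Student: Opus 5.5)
The plan is to use the block structure of $C$ given by the connected components. By \Cref{lemma: connectedComponents}, after permuting columns and applying row operations (which do not change $V_{\Tn}(F_u)$, since they replace $F_u$ by an equivalent system), we may write $C=(\Id_s\ P)$. Then the bipartite graph $\mathcal{G}_P$ splits into components. Thus the rows of $C$ can be partitioned into sets $R_1,\dots,R_c$, one for each component $\mathcal{B}_1,\dots,\mathcal{B}_c$, such that row $r\in R_j$ has nonzero entries only in the columns indexed by $\mathcal{B}_j$. This needs a small check. A row $r\in[s]$ of $(\Id_s\ P)$ is supported on $\{r\}\cup\{s+q : P_{rq}\neq 0\}$. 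Every such column is adjacent to $r$ in $\mathcal{G}_P$, hence lies in the same component as column $r$. Row reduction gives an invertible $G$ with $GC=(\Id_s\ P)$, and $V_{\Tn}(F_u)=V_{\Tn}(GF_u)$.

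For (i), take a row $r$ of $GC$ with $r\in R_j$. Its equation reads $\sum_{i\in\mathcal{B}_j}(GC)_{ri}u_ix^{b_i}=0$. If $\mathcal{B}=\mathcal{B}_j$, replacing $u$ by $u'$ multiplies this equation by $\lambda\neq 0$. If $\mathcal{B}\neq\mathcal{B}_j$, the equation is unchanged. In either case the zero set in $\Tn$ of each equation is the same, so $V_{\Tn}(GF_{u'})=V_{\Tn}(GF_u)$, and therefore $V_{\Tn}(F_{u'})=V_{\Tn}(F_u)$.

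For (ii), the argument is the same. For a row $r\in R_j$ with $\mathcal{B}_j=\mathcal{B}$, the corresponding equation of $GF'_u$ equals $x^a$ times the corresponding equation of $GF_u$. Since $x^a$ is a unit on $\Tn$, the zero sets agree there. Rows belonging to other components are unchanged. Since the same $G$ works for $F'$ (the matrix $C$ is the same), we conclude $V_{\Tn}(F_u)=V_{\Tn}(F'_u)$ for every $u\in\mathbb{C}^m$.

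The only step requiring care is the block-support claim. Concretely, one must confirm that in the reduced echelon form no row mixes columns from two different components. This follows because every nonzero entry of row $r$ yields an edge (or the identity column $r$ itself) in $\mathcal{G}_P$. I expect no real obstacle beyond this bookkeeping.
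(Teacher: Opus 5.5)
Your proof is correct and follows the same route as the paper: row-reduce $C$ so that, by \Cref{lemma: connectedComponents}, each equation involves only the columns of a single connected component. Rescaling the parameters (resp.\ shifting the exponents) on $\mathcal{B}$ then multiplies the equations of that block by $\lambda$ (resp.\ by the unit $x^a$) and leaves the other equations unchanged; your explicit check that each row of $(\Id_s\ P)$ is supported in one component is exactly the block-structure observation the paper invokes.
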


	 \begin{proof}
	 	From \Cref{lemma: connectedComponents}, we deduce that, if we write $C$ as in \eqref{eq: rref}, it must have a block structure where each block corresponds to a single connected component (this observation has been made in \cite[Remark 4.6]{feliu2025toricinvarianceverticallyparametrized}). Therefore, if $s' = r_C(\mathcal{B})$ we may consider the equations $F_1 = \dots = F_{s'} = 0$ that correspond to the block of $\mathcal{B}$.
	 	\begin{enumerate}[label=(\roman*)]
         \item The $u'$ chosen as above corresponds to scaling the polynomial equations in the connected component $\B$ by $\lambda$, which does not change the zero set over $\Tn$.

         \item The $b'$ chosen as above is equivalent to multiplying the equations in the connected component $\B$ by the monomial $x^a$ and does not change the zero locus over $\Tn$.  \qedhere
        \end{enumerate}
	 \end{proof}

In the following, we focus on the property of generic consistency of $F$, which implies that there exists an open subset where $F_u$ has the expected dimension.

   \begin{definition}
        Let $F = C(u \star x^B)$ be a vertical system.  With these notations in place, we say $F$ is \textit{generically consistent} if there exists a nonempty Zariski open subset $U \subset \mathbb{C}^{m}$ such that $\dim V_{\Tn}(F_{u}) = n - s$ for all $u \in U$.
     \end{definition}

In \cite{FELIU2025630}, the authors proved the following theorem and provided a verifiable condition, based on linear algebra, for determining generic consistency. Although other equivalent conditions were given, we summarize their criterion as in the following theorem.

	 \begin{theorem} \cite[Theorem 3.7]{FELIU2025630}
	 	\label{thm:genericConsistencyOSkarElisendaBeatriz}    Given a vertical system $F = C(u \star x^B)$, the following are equivalent:
	 	\begin{enumerate}[label=(\roman*)]
	 		\item The system $F$ is generically consistent.
	 		\item There exists a vector $w \in \ker(C)$ such that the matrix $ C\diag(w)B^{\top}$ has rank $s$.
	 	\end{enumerate}
	 \end{theorem}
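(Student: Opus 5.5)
The plan is to study the incidence variety through an explicit parametrization and reduce generic consistency to the dominance of a morphism. Dominance, by generic smoothness in characteristic zero, is equivalent to the surjectivity of its differential at some point, and that surjectivity condition will turn out to be exactly the rank condition in (ii).

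\emph{Step 1: parametrize the incidence variety.} Set $\mathcal{W}=\{(x,u)\in\Tn\times\CC^m : C(u\star x^B)=0\}$. The condition $C(u\star x^B)=0$ says precisely that $w\coloneqq u\star x^B\in\ker(C)$. Since $x\in\Tn$, the map
\[
\Phi:\Tn\times\ker(C)\to\mathcal{W},\qquad (x,w)\mapsto \bigl(x,\,w\star x^{-B}\bigr)
\]
is an isomorphism. Here $x^{-B}$ denotes the vector $(x^{-b_1},\dots,x^{-b_m})$, and the inverse of $\Phi$ is $(x,u)\mapsto(x,u\star x^B)$. Hence $\mathcal{W}$ is irreducible, smooth and of dimension $n+m-s$. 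Let $\pi:\mathcal{W}\to\CC^m$ be the projection, and note that $\pi^{-1}(u)\cong V_{\Tn}(F_u)$.

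\emph{Step 2: reduce consistency to dominance.} Suppose $\pi$ is dominant. By the theorem on fibre dimension, there is a nonempty Zariski open set $U\subset\CC^m$ over which every fibre is nonempty of pure dimension $(n+m-s)-m=n-s$, so $F$ is generically consistent. Conversely, suppose $\pi$ is not dominant. Then $\overline{\pi(\mathcal{W})}$ is a proper closed subset, and for $u$ outside it the fibre is empty, so $F$ is not generically consistent. Since we work over $\CC$, generic smoothness shows that $\pi\circ\Phi$ is dominant if and only if its differential is surjective at some point of $\Tn\times\ker(C)$. Equivalently, it is dominant if and only if the differential is surjective on a nonempty open set.

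\emph{Step 3: compute the differential.} Write $\psi(x,w)=w\star x^{-B}$. At a point $(x,w)$, and using logarithmic coordinates $\xi=dx/x$ on $\Tn$, the differential is
\[
d\psi(\eta,\xi)=\diag(x^{-B})\bigl(\eta-\diag(w)B^{\top}\xi\bigr),\qquad \eta\in\ker(C),\ \xi\in\CC^n.
\]
Since $\diag(x^{-B})$ is invertible, $d\psi$ is surjective exactly when $\ker(C)+\operatorname{im}(\diag(w)B^{\top})=\CC^m$. Applying $C$, which is surjective onto $\CC^s$ with kernel $\ker(C)$, this holds if and only if $C\diag(w)B^{\top}$ has rank $s$. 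The resulting condition does not depend on $x$ at all.

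\emph{Step 4: conclude.} If some $w\in\ker(C)$ makes $C\diag(w)B^{\top}$ of rank $s$, then $d\psi$ is surjective at $(\mathbf{1},w)$, so $\pi$ is dominant and $F$ is generically consistent. Conversely, if $F$ is generically consistent, then $\pi$ is dominant, so $d\psi$ is surjective at some $(x,w)$, and that $w$ satisfies (ii).

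The point needing the most care is Step 2. There one must check that the equivalence between dominance and surjectivity of the differential is legitimate. It is, because the source $\Tn\times\ker(C)$ is smooth and irreducible and we work in characteristic zero. One must also check that the parametrization is valid even when $u$ or $w$ has zero entries, for instance in the presence of coloops of $M(C)$. No torus restriction on $w$ is used, so these cases are handled automatically.
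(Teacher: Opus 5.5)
Your proof is correct. Parametrizing the incidence variety by $\Tn\times\ker(C)$, reducing generic consistency to dominance of $\pi$, and showing that $d\psi$ is surjective exactly when $\rank C\diag(w)B^{\top}=s$, independently of $x$, gives a complete argument, and it covers the cases with coloops. The paper does not prove this statement itself; it imports it from \cite{FELIU2025630}. Your route is nonetheless the mechanism the paper uses in the opening step of the proof of \Cref{thm:mainCodimension}, specialized to codimension zero: there $\mathcal W_F$ is described through the kernels of $C\diag(x^B)$, and generic smoothness and fibre dimension are combined with the Jacobian $C\diag(w)B^{\top}\diag(x)^{-1}$.
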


In addition, they showed that if
$F$ is generically consistent, then there is an open set where its solutions are non-degenerate, meaning their Jacobian has full rank. If
$F$ is not generically consistent, then for any parameter values where $V_{\Tn}(F_u)$ is nonempty, the solutions are degenerate.

\begin{remark}
	\label[remark]{rk: reduce cases}

	If there exists a row of $C$ where only one entry is nonzero, one can easily rule out the possibility of generic consistency. Indeed, the equation corresponding to that row is of the form $u \cdot x^{b} = 0$ for some $b \in \mathbb{Z}^n$. For $u \neq 0$, we get $V_{\Tn}(F_{u}) = \varnothing$. This rules out the case where $\ell = 0$. Moreover, we also exclude the case when $C$ has a zero column, as such a column would not appear in the polynomial system $F = C(u \star x^B)$.
	Therefore, given a matrix $C \in \CC^{s \times m}$, we assume that $m > s$ and that
	$M(C)$ has no loops or coloops, i.e. $C$ has no zero columns and each row of \eqref{eq: rref} has at least two nonzero entries. All in all, up to a reordering of the columns in \eqref{eq: rref}, we may assume that $P \in \mathbb{C}^{s \times \ell}$ has no zero rows or columns.
\end{remark}

	 Note that the condition on the rank of $C\diag(w)B^{\top}$ can be reduced to checking whether a certain polynomial with coefficients in the vector space $\wedge^s \mathbb{C}^n$ is nonzero.

	 \begin{proposition}
	 	\label[proposition]{lemma_1}
	 	Let $F = C(u \star x^B)$ and assume that $C \in \mathbb{C}^{s \times m}$ is of the form \eqref{eq: rref} for some $P \in \mathbb{C}^{s \times \ell}$. There exists $w \in \ker(C)$ such that $C\diag(w)B^{\top}$ has rank $s$ if and only if
	 	\begin{equation}
	 		\label{eq:consistencyPolynomial}
	 		q(\lambda) = \sum_{\varphi: [s] \xrightarrow[]{} [\ell]}\bigwedge_{i = 1}^s \lambda_{\varphi(i)}  P_{i,\varphi(i)}\big(b_i - b_{s + \varphi(i)}\big)
	 	\end{equation}
	 	is a nonzero polynomial in $\big(\wedge^s \mathbb{C}^n\big)[\lambda_1,\dots,\lambda_{\ell}]$.

	 \end{proposition}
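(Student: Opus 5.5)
We parametrize $\ker(C)$ explicitly and then read off the rank condition from the maximal minors. Since $C = (\Id_s \;\; P)$, the kernel is the image of the Gale dual matrix
\[
D = \begin{pmatrix} -P \\ \Id_\ell \end{pmatrix} \in \mathbb{C}^{m\times \ell},
\]
so every $w \in \ker(C)$ can be written $w = D\lambda$ with $\lambda \in \mathbb{C}^\ell$. Concretely, $w_{s+j} = \lambda_j$ for $j \in [\ell]$ and $w_i = -\sum_{j} P_{ij}\lambda_j$ for $i\in[s]$.

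The $i$-th row of $C\diag(w)B^\top$ is the vector $\sum_k C_{ik}w_k b_k \in \mathbb{C}^n$. The only nonzero entries of row $i$ of $C$ are $C_{ii}=1$ and $C_{i,s+j}=P_{ij}$. Hence this row equals
\[
w_i b_i + \sum_{j=1}^{\ell} P_{ij}\lambda_j b_{s+j} = -\sum_j P_{ij}\lambda_j b_i + \sum_j P_{ij}\lambda_j b_{s+j} = -\sum_{j=1}^{\ell} \lambda_j P_{ij}\,(b_i - b_{s+j}).
\]
Call this vector $r_i(\lambda)$; it is linear in $\lambda$.

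Next we use the rank criterion. An $s\times n$ matrix has rank $s$ if and only if the wedge of its rows, $r_1(\lambda)\wedge\dots\wedge r_s(\lambda)\in\wedge^s\mathbb{C}^n$, is nonzero. Expanding this wedge by multilinearity means choosing, for each row $i$, one summand index $\varphi(i)\in[\ell]$. This gives
\[
\bigwedge_{i=1}^s r_i(\lambda) = (-1)^s \sum_{\varphi:[s]\to[\ell]} \bigwedge_{i=1}^s \lambda_{\varphi(i)}P_{i,\varphi(i)}(b_i-b_{s+\varphi(i)}) = (-1)^s q(\lambda).
\]
So there is a $w\in\ker C$ with $\rank C\diag(w)B^\top = s$ if and only if there is a $\lambda\in\mathbb{C}^\ell$ with $q(\lambda)\neq 0$.

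Finally we compare evaluation with the polynomial itself. Choose a basis of $\wedge^s\mathbb{C}^n$; then $q$ is a vector of polynomials in $\mathbb{C}[\lambda]$. A polynomial over the infinite field $\mathbb{C}$ is nonzero if and only if it has a non-vanishing evaluation. Therefore $q\not\equiv 0$ exactly when some evaluation $q(\lambda)$ is nonzero, which is the claimed equivalence.

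The only delicate step is the multilinear expansion. Terms in which two rows select parallel vectors need not cancel individually, but no cancellation argument is required: we never claim anything about individual terms, only that the full sum equals the wedge of the rows. Nothing beyond this bookkeeping should be needed.
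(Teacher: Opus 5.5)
Your proof is correct and follows the paper's argument: you parametrize $\ker(C)$ by $D=\begin{pmatrix}-P\\ \Id_\ell\end{pmatrix}$, compute the rows $\sum_j P_{ij}\lambda_j(b_{s+j}-b_i)$ of $C\diag(D\lambda)B^{\top}$, and expand the wedge of these rows (equivalently, the vector of maximal minors) by multilinearity. Your handling of the sign $(-1)^s$ and of the step ``nonzero polynomial over $\mathbb{C}$ if and only if it has a nonzero evaluation'' only makes explicit what the paper leaves implicit.
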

\begin{proof}
    Consider the matrix $D \in \mathbb{C}^{m \times \ell}$
    $$D = \begin{pmatrix}
        -P \\
        \Id_{\ell}
    \end{pmatrix},$$
    whose columns span the kernel of $C$. In order to check that condition $(ii)$ in \Cref{thm:genericConsistencyOSkarElisendaBeatriz} holds, we have to check that the matrix $C \diag(D \lambda) B^{\top}$ has rank $s$ for some $\lambda \in \mathbb{C}^{\ell}$. Expanding on $C \diag(D \lambda)$, we deduce
    $$C \diag(D\lambda) = \begin{pmatrix}
        -\sum_{j = 1}^{\ell}P_{1j}\lambda_j & 0 & \dots & 0 & P_{11}\lambda_1 & \dots & P_{1\ell}\lambda_{\ell} \\
        0 & -\sum_{j = 1}^{\ell}P_{2j}\lambda_j & \dots & 0 & P_{21}\lambda_1 & \dots & P_{2\ell}\lambda_{\ell}  \\
        \dots & & \dots & \dots & \dots && \dots \\
        0 & \dots & 0 &-\sum_{j = 1}^{\ell}P_{sj}\lambda_j &  P_{s1}\lambda_1 & \dots & P_{s\ell} \lambda_{\ell} \\
    \end{pmatrix}$$
    which implies that
    \begin{equation}
        \label{equatio_Consistencyn}
        C \diag(D\lambda)B^{\top} = \begin{pmatrix}
            \sum_{j = 1}^{\ell}P_{1j}\lambda_j\big(b_{s + j} - b_{1} \big) \\
            \sum_{j = 1}^{\ell}P_{2j}\lambda_j\big(b_{s + j} - b_{2} \big) \\
            \dots \\
            \sum_{j = 1}^{\ell}P_{sj}\lambda_j\big(b_{s + j} - b_{s} \big) \\
        \end{pmatrix}.
    \end{equation}
    This matrix has rank $s$ for some choice of $\lambda \in \mathbb{C}^{\ell}$ if and only if the polynomial in $\wedge^s \mathbb{C}^n \otimes \mathbb{C}[\lambda_1,\dots,\lambda_{\ell}]$ where the coefficient of $e_{i_1} \wedge \dots \wedge e_{i_s}$ is the minor of $C\diag(D\lambda)B^{\top}$ corresponding to the columns $\{i_1,\dots,i_s\}$ is nonzero. This polynomial can be written as the polynomial $q(\lambda)$ in \eqref{eq:consistencyPolynomial} by using that the determinant is multilinear and choosing the summand corresponding to $\varphi(i) \in [\ell]$ in the $i$-th row of the matrix \eqref{equatio_Consistencyn}.
\end{proof}

	 	\Cref{lemma_1} implies that verifying that the system $F = C(u \star x^B)$ is generically consistent can be reduced to verifying that one of the coefficients of the monomials in $q(\lambda)$ is nonzero. Although $q(\lambda)$ may depend upon the choice of the row-reduced echelon form, the condition $q(\lambda) \equiv 0$ cannot depend on this choice.

\subsection{Iterated circuits and Newton polytopes}
In this subsection, we consider the iterated circuits of $M(C)$ and the associated Newton polytopes obtained from the columns of $B$ for each circuit. As a main result, we will give an explicit condition that ensures degeneracy of the system. For each subset $X \subset [m]$,
we denote by $\Delta_{X}$ the Newton polytope generated by the columns of $B$ restricted to $X$, i.e.
$\Delta_{X} \coloneqq \conv \{ b_j : \: j \in X \} \subset \mathbb{R}^n.$
Additionally, we set $\Delta \coloneqq \Delta_{[m]}$. In some cases, \Cref{lemma_1} can already be used to deduce the generic consistency of $F = C(u \star x^B)$.

\begin{corollary}
\label[corollary]{corollary:circuits}
Let $F = C(u \star x^B)$ be a vertical system.
If there exists a circuit $\mathcal{C} \subset [m]$ of $M(C)$ such that $\dim \big(\Delta_{\mathcal{C}} \big) = s$, then $F$ is generically consistent.
\end{corollary}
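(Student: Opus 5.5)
Since \Cref{lemma_1} reduces generic consistency to finding $w\in\ker(C)$ with $\rank C\diag(w)B^\top=s$, I will take $w$ to be the circuit vector of $\mathcal{C}$. Let $w\in\ker(C)$ be the unique (up to scaling) kernel vector supported exactly on $\mathcal{C}$, so $w_j\neq 0$ precisely for $j\in\mathcal{C}$. I then show that $C\diag(w)B^{\top}$ has rank $s$ and conclude by \Cref{thm:genericConsistencyOSkarElisendaBeatriz}. I will not need the polynomial $q(\lambda)$ at all; it is only a repackaging of the same rank condition.

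Write $M\coloneqq C\diag(w)B^{\top}=\sum_{j\in\mathcal{C}} w_j\, c_j b_j^{\top}$. Since $\sum_{j\in\mathcal{C}} w_j c_j=0$, I may fix any $j_0\in\mathcal{C}$ and subtract $\bigl(\sum_j w_jc_j\bigr) b_{j_0}^\top=0$, which gives
\[
M=\sum_{j\in\mathcal{C}\setminus\{j_0\}} w_j\, c_j\,(b_j-b_{j_0})^{\top}.
\]
Because $\mathcal{C}$ is a circuit, the vectors $\{c_j : j\in\mathcal{C}\setminus\{j_0\}\}$ are linearly independent. Let $k=|\mathcal{C}|-1$ and form the $s\times k$ matrix $U$ with columns $w_jc_j$, which has full column rank $k$ since $w_j\neq0$. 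Let $V$ be the $k\times n$ matrix with rows $(b_j-b_{j_0})^\top$. Then $M=UV$, and since $U$ is injective, $\rank M=\rank V=\dim\spann\{b_j-b_{j_0}: j\in\mathcal{C}\}=\dim\Delta_{\mathcal{C}}$. By hypothesis this equals $s$.

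The only point needing care is the relation between $\rank V$ and the affine dimension of $\Delta_{\mathcal{C}}$: these agree because the affine hull of $\{b_j\}_{j\in\mathcal{C}}$ is $b_{j_0}+\spann\{b_j-b_{j_0}\}$. One should also remark that rank is computed over $\mathbb{C}$, which agrees with the real rank for the integer vectors $b_j-b_{j_0}$. The hypothesis forces $k\ge s$ while $k\le s$ holds automatically, so in fact $|\mathcal{C}|=s+1$ and $\mathcal{C}$ spans. The argument does not need this. I expect no real obstacle; the main step is the rewriting of $M$ using the kernel relation.
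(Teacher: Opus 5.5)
Your proof is correct and is essentially the paper's argument. The paper row-reduces $C$ with respect to $\mathcal{C}\setminus\{j_0\}$ and reads off the coefficient of $\lambda_1^s$ in $q(\lambda)$. That coefficient is just $q(e_1)$, i.e.\ the rank test of \Cref{thm:genericConsistencyOSkarElisendaBeatriz} at the circuit vector $De_1$ supported on $\mathcal{C}$, which is exactly your $w$; your factorization $M=UV$ does the same computation without row reduction. (Small slip: the rank criterion in your first line is \Cref{thm:genericConsistencyOSkarElisendaBeatriz}, not \Cref{lemma_1}.)
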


\begin{proof}
    We can assume that the first $s + 1$ columns of $C$ correspond to $\mathcal{C}$ and deduce that $\prod_{i = 1}^{s}P_{i1} \neq 0$. With this, we deduce that the coefficient of $\lambda_1^s$ in $q(\lambda)$ equals
	$$\bigwedge_{i = 1}^s P_{i1}\big( b_{s + 1} - b_i \big),$$
	which is nonzero if and only if $\dim \big(\Delta_{\mathcal{C}} \big) = s$.
\end{proof}

 In particular, when $\ell = 1$, we have that $F = C(u \star x^B)$ is generically consistent if and only if $\dim \Delta = s$. We will generalize this aspect in \Cref{lemmacircuits}.

\begin{proposition}
	\label[proposition]{lemma:simplices}
	Let $F = C(u \star x^B)$ be a vertical system and let $(\mathcal{C}_1,\dots,\mathcal{C}_k)$ be an iterated circuit of the matroid $M(C)$ satisfying
	\begin{equation}
		\label{eq:maximalDimension}
		\dim\Big(\sum_{i = 1}^k\Delta_{\mathcal{C}_i}\Big) = s.\end{equation}
	Then,
	\begin{enumerate}[label=(\roman*)]
		\item $\dim(\Delta_{\mathcal{C}_i}) = |\mathcal{C}_i| - 1$ for all $i \in \{1,\dots,k\}$.
		\item $\sum_{i = 1}^k\dim\Delta_{\mathcal{C}_i} = s$.
	\end{enumerate}

\end{proposition}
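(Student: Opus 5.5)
The argument is a sandwich: a lower bound on $s$ from subadditivity of dimension under Minkowski sums, and an upper bound on $s$ from the ranks of the contractions along the iterated circuit.

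\emph{Lower bound.} For polytopes $P,Q$, the affine hull of $P+Q$ is the Minkowski sum of their affine hulls, so $\dim(P+Q)\le \dim P+\dim Q$. By induction,
\[
s=\dim\Big(\sum_{i=1}^k\Delta_{\mathcal{C}_i}\Big)\le \sum_{i=1}^k \dim \Delta_{\mathcal{C}_i}.
\]
Since $\Delta_{\mathcal{C}_i}$ is the convex hull of $|\mathcal{C}_i|$ points, also $\dim\Delta_{\mathcal{C}_i}\le |\mathcal{C}_i|-1$. This gives
\[
s\le \sum_{i=1}^k\dim\Delta_{\mathcal{C}_i}\le \sum_{i=1}^k(|\mathcal{C}_i|-1).
\]

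\emph{Upper bound.} Put $X_i=\bigcup_{j\le i}\mathcal{C}_j$, with $X_0=\varnothing$. Since $\mathcal{C}_{i+1}$ is a circuit of the contraction $M(X_i)$, its rank there is $|\mathcal{C}_{i+1}|-1$. By the contraction rank formula \eqref{eq: rankFunction},
\[
r_C(X_{i+1})-r_C(X_i)=r_{X_i}(\mathcal{C}_{i+1})=|\mathcal{C}_{i+1}|-1.
\]
Telescoping yields
\[
r_C(X_k)=\sum_{i=1}^k(|\mathcal{C}_i|-1).
\]
Because $C$ has rank $s$, we get $\sum_i(|\mathcal{C}_i|-1)=r_C(X_k)\le s$.

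\emph{Conclusion.} Combining the two bounds gives
\[
s\le\sum_i\dim\Delta_{\mathcal{C}_i}\le\sum_i(|\mathcal{C}_i|-1)\le s,
\]
so every inequality is an equality. The equality $\sum_i\dim\Delta_{\mathcal{C}_i}=s$ is (ii). The equality $\sum_i\dim\Delta_{\mathcal{C}_i}=\sum_i(|\mathcal{C}_i|-1)$, together with the termwise inequality $\dim\Delta_{\mathcal{C}_i}\le|\mathcal{C}_i|-1$, forces equality in each term, which is (i). As a by-product, $r_C(X_k)=s$.

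There is no real obstacle here. The only points needing care are the subadditivity of dimension under Minkowski sums (justified via affine hulls) and the telescoping rank identity, both of which are standard.
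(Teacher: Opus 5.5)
Your proof is correct and is essentially the paper's argument: the chain $\dim\big(\sum_i\Delta_{\mathcal{C}_i}\big)\le\sum_i\dim\Delta_{\mathcal{C}_i}\le\sum_i(|\mathcal{C}_i|-1)=r_C\big(\bigcup_i\mathcal{C}_i\big)\le s$ is forced to consist of equalities. The only addition is that you derive the telescoping rank identity explicitly from the contraction rank formula \eqref{eq: rankFunction}, whereas the paper simply says it follows from the definition of iterated circuits.
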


\begin{proof}
	For each $i = 1,\dots,k$, $\dim \Delta_{\mathcal{C}_i} \leq |\mathcal{C}_i| - 1$ where equality holds if and only if the points $b_j$ for $j \in \mathcal{C}_i$ are affinely independent. Taking the sum for all $i \in \{1,\dots,k\}$, we get
	$$\dim(\sum_{i = 1}^k \Delta_{\mathcal{C}_i}) \leq \sum_{i = 1}^k\dim\Delta_{\mathcal{C}_i} \leq \big(\sum_{i = 1}^k|\mathcal{C}_i|\big) - k  = r_C\big(\bigcup_{i=1}^k\mathcal{C}_i\big)\leq s,$$
	where the equality follows from the definition of iterated circuits. Consequently, if $\dim\Big(\sum_{i = 1}^k\Delta_{\mathcal{C}_i}\Big) = s$, then each $\Delta_{\mathcal{C}_i}$ has dimension $|\mathcal{C}_i| - 1$ and $\sum_{i = 1}^k\dim\Delta_{\mathcal{C}_i} = s$.
\end{proof}

In the following example, we show that it is possible that none of the maximal circuits $\mathcal{C}$ of $M(C)$ (those with $|\mathcal{C}| = s + 1$) satisfy $\dim(\Delta_{\mathcal{C}}) = s$, but there exists an iterated circuit satisfying \eqref{eq:maximalDimension}.

\begin{example}
	\label[example]{example: pyramid}
	Consider $F = C(u \star x^B)$ to be the vertically parametrized polynomial system $F = C(u \star x^B)$ for the matrices:
	$$C = \begin{pmatrix}
		1 & 0 & 0 & 1 & 0 \\
		0 & 1 & 0 & 1 & 1 \\
		0 & 0 & 1 & 0 & 1
	\end{pmatrix}, \quad B = \begin{pmatrix}
		0 & 0 & 1 & 1 & 0 \\
		0 & 0 & 0 & 1 & 1\\
		0 & 1 & 0 & 0 & 0
	\end{pmatrix}$$
	which corresponds to the polynomial system
	\begin{equation}
		\label{second_system}
		\begin{cases}
			u_1 + u_4 xy = 0 ,\\[4pt]
			u_2z + u_4 xy + u_5 y = 0 ,\\[4pt]
			u_3x + u_5 y = 0 ,\\
		\end{cases}
	\end{equation}
	The only circuit $\mathcal{C}$ of $M(C)$ such that $|\mathcal{C}| = 4$ is the one corresponding to the columns $\{1,3,4,5\}$. However, the exponents corresponding to these columns are contained in the plane $\{z = 0\}$ and $\dim \Delta_{\mathcal{C}} = 2$. Nonetheless, if we consider the iterated circuit $(\mathcal{C}_1,\mathcal{C}_2)$ of $M(C)$ corresponding to the columns $\{1,2,4\}$ and $\{3,5\}$, then the polytopes $\Delta_{\mathcal{C}_1}$ and $\Delta_{\mathcal{C}_2}$ are
	$$\Delta_{\mathcal{C}_1} = \conv\{(0,0,0),(0,0,1),(1,1,0)\}, \quad \Delta_{\mathcal{C}_2} = \conv\{(1,0,0),(0,1,0)\}$$
	whose sum has dimension $3$.
	\end{example}

Thus, the previous example motivates the need for iterated circuits for studying generic consistency. We now state a general result on matroids, which gives a criterion for ruling out generic consistency.

\begin{theorem}
	\label[theorem]{lemmacircuits}
	Let $M$ be a matroid of rank $s$ on the index set $[m]$ and let $b_1,\dots,b_m \in \mathbb{R}^n$. Assume that for every circuit $\mathcal{C}$ of $M$ we have
	\begin{equation}
		\label{eq:hypotheselemmacircuits}
		\dim \Delta_{\mathcal{C}} < |\mathcal{C}|-1,
	\end{equation}
	then, for any function $\varphi: \{1,\dots,s\} \to \{s + 1,\dots,m\}$, the vector space $$L_{\varphi} = \spann_{\mathbb{R}}\{ b_i - b_{\varphi(i)} : \: i \in [s]\} \subset \mathbb{R}^n$$ satisfies  $
	\dim_{\RR} L_{\varphi} < s.$
\end{theorem}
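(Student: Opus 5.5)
The plan is to bound $\dim L_\varphi$ by the dimension of an affine span and then compare affine rank with matroid rank. For a nonempty subset $S\subseteq[m]$, write $f(S)\coloneqq \dim\big(\mathrm{aff}\{b_j : j\in S\}\big)+1=\dim\Delta_S+1$ for the affine rank of the points indexed by $S$. Set $S_\varphi\coloneqq [s]\cup\varphi([s])$. Every generator $b_i-b_{\varphi(i)}$ of $L_\varphi$ is a difference of two points of $\{b_j : j\in S_\varphi\}$. Hence $L_\varphi$ lies in the direction space of $\mathrm{aff}\{b_j : j\in S_\varphi\}$, which gives
\[
\dim_{\RR} L_\varphi \le f(S_\varphi)-1 .
\]
It therefore suffices to show $f(S_\varphi)\le r_M(S_\varphi)$, because $r_M(S_\varphi)\le s$ then yields $\dim L_\varphi\le s-1<s$. (If $s=0$ the statement is vacuous, so assume $s\geq 1$, in which case $S_\varphi\neq\varnothing$.)

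The key step is the claim that, under hypothesis \eqref{eq:hypotheselemmacircuits}, every nonempty $S\subseteq[m]$ satisfies $f(S)\le r_M(S)$. I would prove this by induction on $|S|$. If $S$ is independent in $M$, the bound is trivial: $f(S)\le |S| = r_M(S)$. If $S$ is dependent, it contains a circuit $\mathcal{C}$ of $M$. By hypothesis $\dim\Delta_{\mathcal{C}}<|\mathcal{C}|-1$, so the points $\{b_j\}_{j\in\mathcal{C}}$ are affinely dependent. Consequently there is some $e\in\mathcal{C}$ with $b_e\in\mathrm{aff}\{b_j : j\in\mathcal{C}\setminus\{e\}\}$, and this gives $f(S)=f(S\setminus\{e\})$.

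On the matroid side, every element of a circuit lies in the closure of the remaining elements of that circuit, so $r_M(S)=r_M(S\setminus\{e\})$. Moreover $S\setminus\{e\}$ is nonempty, since $|\mathcal{C}|\ge 2$ for a loopless matroid; a loop $e$ would have $\Delta_{\{e\}}$ of dimension $0$, not $<0$, so the hypothesis itself excludes loops. The induction hypothesis then gives $f(S)=f(S\setminus\{e\})\le r_M(S\setminus\{e\})=r_M(S)$.

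The main obstacle is the choice of the element $e$. Affine dependence of $\{b_j\}_{j\in\mathcal{C}}$ only guarantees that \emph{some} point is in the affine span of the others, not any prescribed one. The argument works because the matroid side needs no particular choice: any element of a circuit can be removed without dropping the rank. I would check this point carefully. A useful consequence is that the argument needs no structural information about $\varphi$ (for instance, that $[s]$ is a basis or that $P_{i\varphi(i)}\neq 0$); only $r_M(S_\varphi)\le s$ is used.
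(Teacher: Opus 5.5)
Your proof is correct. It shares its outer structure with the paper's: both place $L_\varphi$ inside the direction space of $\mathrm{aff}\{b_j : j\in[s]\cup\varphi([s])\}$, and both bound that affine dimension using the fact that circuits of $M$ have affinely dependent points. The difference is in how the bound is obtained.

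The paper argues by contradiction. It takes a maximal affinely independent subfamily $Y$ of $X=[s]\cup\varphi([s])$, so $|Y|=\dim\Delta_X+1$. If this were at least $s+1$, then $Y$ would be dependent in $M$, which has rank $s$. So $Y$ would contain a circuit whose points form a subfamily of an affinely independent family, hence are affinely independent, contradicting the hypothesis.

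You instead prove the local inequality $\dim\Delta_S+1\le r_M(S)$ for every nonempty $S$ by induction on $|S|$. You remove an element $e$ of a circuit $\mathcal{C}\subseteq S$ with $b_e$ in the affine span of the other points of $\mathcal{C}$, which preserves both the affine rank and $r_M$. Your handling of the choice of $e$ is right: any element of a circuit lies in the closure of the rest, so no particular $e$ is needed. Your observation that the hypothesis excludes loops is also right.

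What your version buys is the sharper bound $\dim L_\varphi\le r_M([s]\cup\varphi([s]))-1$, and it makes visible that nothing about $\varphi$ is used. What the paper's version buys is brevity. In fact, the paper's idea yields your inequality in one line without induction. A maximal affinely independent subfamily $Y\subseteq S$ contains no circuit by hypothesis, so it is independent in $M$, and hence $\dim\Delta_S+1=|Y|\le r_M(S)$.
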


\begin{proof}
	Let $X \subseteq [m]$ be a subset with $|X| \ge s+1$, and let $d = \dim \Delta_{X}$. Choose a maximal subset $Y\subseteq X$ such that the set $\{b_i\}_{i \in Y}$ is affinely independent. The number of elements in $Y$ must be $|Y| = d+1$. Since $M$ has rank $s$, any subset of size strictly bigger than $s$ is dependent.
	If $d \ge s$, then $|Y| = d+1 \ge s+1$. Equivalently, there exists a circuit $\mathcal{C} \subset Y$. The set $\{b_i\}_{i \in \mathcal{C}}$ corresponding to  $\mathcal{C}$ is affinely independent, implying that $\dim \Delta_{\mathcal{C}} = |\mathcal{C}|-1$, contradicting \eqref{eq:hypotheselemmacircuits}. Therefore $d < s$.

	Fix a function $\varphi: [s] \xrightarrow{} \{s+1,\dots,m\}$ and consider the set $X = [s] \cup \varphi([s])$.
	Since $\varphi([s]) \subseteq \{s+1,\dots,m\}$ is nonempty, we have $|X| \ge s+1$, implying that $\dim \Delta_{X} < s$. Moreover, all the vectors in $L_{\varphi}$ are contained in the linear span of $\Delta_X$ (see \eqref{eq: directionSpaces} later on). Therefore, $\dim L_{\varphi} \le \dim \Delta_{X} < s$.
	\qedhere
\end{proof}

\begin{corollary}
	\label[corollary]{corollary:circuit}
    Let $F = C(u \star x^B)$ be a vertical system. Assume that for all circuits  $\mathcal{C} \subset [m]$ of $M(C)$, we have $\dim(\Delta_{\mathcal{C}}) < |\mathcal{C}| - 1.$ Then, the polynomial $q$ in \eqref{eq:consistencyPolynomial} is identically zero. In particular, $F$ is not generically consistent.
\end{corollary}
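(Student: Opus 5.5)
We plan to deduce the corollary directly from \Cref{lemmacircuits} together with \Cref{lemma_1} and \Cref{thm:genericConsistencyOSkarElisendaBeatriz}. After reordering the columns and applying row operations, which change neither $V_{\Tn}(F_u)$ for generic $u$ nor the matroid $M(C)$, we may assume that $C$ is in the form \eqref{eq: rref}, $C = (\Id_s \ P)$. Then the columns $1,\dots,s$ form a basis of $M(C)$, and the matroid $M = M(C)$ has rank $s$ on $[m]$. Its circuits are exactly the circuits of $M(C)$, so the hypothesis of \Cref{lemmacircuits} holds for the points $b_1,\dots,b_m$. (These lie in $\mathbb{Z}^n \subset \mathbb{R}^n$, and real and complex dimensions of the spans agree because the vectors are real.)

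Next, we expand $q(\lambda)$ from \eqref{eq:consistencyPolynomial}. Each monomial coefficient of $q$ is a sum of terms indexed by maps $\varphi:[s]\to[\ell]$. Each term is a scalar multiple $\prod_i P_{i,\varphi(i)}$ times the wedge
\[
\bigwedge_{i=1}^s \bigl(b_i - b_{s+\varphi(i)}\bigr).
\]
Identifying $\varphi$ with the map $i\mapsto s+\varphi(i)$ into $\{s+1,\dots,m\}$, the vectors in this wedge span exactly $L_\varphi$. By \Cref{lemmacircuits}, $\dim L_\varphi < s$, so these $s$ vectors are linearly dependent and their wedge product vanishes in $\wedge^s\mathbb{C}^n$. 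Hence every summand of $q$ is zero, and $q \equiv 0$ identically, not merely after cancellation.

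Finally, by \Cref{lemma_1}, $q\equiv 0$ means that no $w\in\ker(C)$ makes $C\diag(w)B^\top$ of rank $s$. By \Cref{thm:genericConsistencyOSkarElisendaBeatriz}, $F$ is then not generically consistent. The only point needing care is that the row-reduction and reordering to \eqref{eq: rref} are harmless. Row operations on $C$ replace $F$ by an invertible linear combination of its equations, and column permutations just relabel parameters; both preserve the matroid, the circuits and the polytopes $\Delta_{\mathcal C}$. I do not expect a genuine obstacle: the substance is entirely in \Cref{lemmacircuits}.
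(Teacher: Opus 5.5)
Your proposal is correct and follows the paper's own proof. The paper also applies \Cref{lemmacircuits} term by term to $q(\lambda)$, so that each wedge $\bigwedge_{i}(b_i - b_{s+\varphi(i)})$ vanishes, and then concludes via \Cref{lemma_1} together with \Cref{thm:genericConsistencyOSkarElisendaBeatriz}; your remarks on reducing to the form \eqref{eq: rref} only spell out what the paper leaves implicit.
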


\begin{proof}
	This follows from \Cref{lemmacircuits}, applied to the terms of the polynomial $q(\lambda)$ in \eqref{eq:consistencyPolynomial}. The second part follows from \Cref{lemma_1}.
\end{proof}

\subsection{Gale duality}

Gale duality has been established as a central tool in enumerative geometry of sparse systems, for studying complex, real, or positive solutions \cite{sottile2011real, bihan2021optimaldescartes}. In this section, we show that the structure of vertical systems is adequate to introduce Gale duality in parametric polynomial systems.

Let $F = C(u \star x^B)$ be a vertical system with $C \in \mathbb{C}^{s \times m}$ and $B \in \mathbb{Z}^{n \times m}$. We let $\overline{B} \in \mathbb{Z}^{(n + 1) \times m}$ be the matrix $B$ after adding a first row equal to $\mathbf{1} \in \mathbb{Z}^m$, i.e.
$$\overline{B} = \begin{pmatrix}
    1 & 1 & \dots & 1 \\
    b_1 & b_2 & \dots & b_m
\end{pmatrix}.$$
Let $D \in \mathbb{C}^{m \times \ell}$ be a Gale dual matrix of $C$ with rows $d_1,\dots,d_m \in \mathbb{C}^{\ell}$, i.e. its columns generate the kernel of $C$. We let $G \in \mathbb{Z}^{m \times (m - 1 - n)}$ be a Gale dual matrix of $\overline{B}$, whose columns generate the kernel of $\overline{B}$ as an abelian group. Finally, we let $g_1,\dots,g_m \in \mathbb{Z}^{m - n -1}$ be the rows of $G$. Here, we assume that $\overline{B}$ has rank $n+1$.

The \textit{Gale dual system} of $F$ is the system
\begin{equation}
\label{eq:galeDualSystem}
H \coloneqq
\left(D(y)\star\frac{1}{u}\right)^G-\mathbf 1_{m-n-1} \in \mathbb{C}[u^{\pm},y_1^{\pm},\dots,y_{\ell}^{\pm}],
\end{equation}
where $D(y)\in\mathbb C[y]^m$ denotes the vector whose $i$-th
entry is the linear form $\langle d_i,y\rangle$.

\begin{remark}
\label[remark]{rk: equationsOfDual}
We note that, as the columns of $G$ generate the kernel over $\mathbb{Z}$, the greatest common divisor of the entries of the columns of $G$ is 1.  Secondly, for
$j\in\{1,\dots,m-n-1\}$, the $j$-th equation of
\eqref{eq:galeDualSystem} is
\[
\prod_{i=1}^m
\left(\frac{\langle d_i,y\rangle}{u_i}\right)^{g_{ij}}=1.
\]
As $\mathbf{1}_m \in \mathbb{Z}^m$ is a row of $\overline{B}$, the left-hand side of this identity is a rational function of degree $0$. Thus, we study the zero locus of this equation in $\mathbb{P}^{\ell - 1}$ when each of the factors is a nonzero complex number, i.e. $u_i\langle d_i, y\rangle \neq 0$. After clearing denominators, we recover the polynomial equation
\[
\prod_{g_{ij}>0}\Big(\frac{\langle d_i,y\rangle}{u_i}\Big)^{g_{ij}}
-
\prod_{g_{ij}<0}\Big(\frac{\langle d_i,y\rangle}{u_i}\Big)^{-g_{ij}}
=0.
\]

\end{remark}

Note that if $u_i = 0$, we may consider the Gale dual of the system $F$ after removing the $i$-th column in $C$. Thus, for $u \in (\mathbb{C}^*)^m$, we consider $H_u$ to be the specialization of \eqref{eq:galeDualSystem} and the zero set:
\begin{equation}
\label{eq: zeroSetVD}
    V_D(H_u) = \{y \in \mathbb{P}^{\ell - 1} :\: H_u = 0, \: \langle d_i,y \rangle \neq 0, \: \forall i \in \{1,\dots,m\} \}
\end{equation}

\begin{theorem}
\label[theorem]{thm:galeDualityVerticalSystems}
Let \(F=C(u\star x^B)\) be a vertical system, and let
\[
H=\left(D(y)\star\frac{1}{u}\right)^G-\mathbf{1}_{m-n-1}
\]
be its Gale dual. Then, for every \(u\in(\mathbb{C}^*)^m\),
\begin{enumerate}[label=(\roman*)]
    \item If the columns of $\overline B$ generate $\mathbb{Q}^{n+1}$, \(\dim V_{\Tn}(F_u)=\dim V_D(H_u)\).
    \item If the columns of $\overline B$ generate $\mathbb{Z}^{n+1}$,
then Gale duality induces an isomorphism of schemes
$
V_{\Tn}(F_u)\cong V_D(H_u).
$
\end{enumerate}
\end{theorem}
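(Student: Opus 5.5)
We follow the standard Gale duality argument for fewnomial systems, with the parameters absorbed into the coefficients. Fix $u\in(\mathbb{C}^*)^m$.

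The first step is to describe the solution set through the kernel of $C$. A point $x\in\Tn$ satisfies $F_u(x)=0$ exactly when the vector $z\coloneqq u\star x^B\in(\mathbb{C}^*)^m$ lies in $\ker(C)$. Since the columns of $D$ span $\ker(C)$, this means $z=D(y)$ for a unique $y\in\mathbb{C}^{\ell}$, with every coordinate $\langle d_i,y\rangle$ nonzero.

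The second step identifies which vectors $z$ arise this way. A vector $z\in(\mathbb{C}^*)^m$ has the form $u\star x^B$ for some $x\in\Tn$ if and only if $z\star u^{-1}$ lies in the image of the monomial map $\Tn\to(\mathbb{C}^*)^m$, $x\mapsto x^B$. To allow for the projectivization in $y$, we pass to $\overline{B}$: include the map $(t,x)\mapsto t\cdot x^B$, so that $z\star u^{-1}$ is required to lie in the image of the torus $(\mathbb{C}^*)^{n+1}$ under $\overline{B}$. Scaling $y$ by $t$ then corresponds to the row of ones. This produces a bijection between
\[
\{(t,x)\in\mathbb{C}^*\times\Tn : C(u\star t x^B)=0\}\quad\text{and}\quad\{y : \langle d_i,y\rangle\neq0,\ D(y)\star u^{-1}\in\operatorname{im}(\overline{B})\}.
\]
Quotienting by $t$ on one side and by the scaling of $y$ on the other yields $V_{\Tn}(F_u)\leftrightarrow V_D(H_u)$. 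The $t$-quotient is harmless on the left, because $F_u$ is homogeneous of degree one in $t$: $C(u\star tx^B)=t\,F_u(x)$.

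The third step is the image computation, which is the key lemma. Because $G$ generates $\ker_{\mathbb{Z}}\overline{B}$, the image of $x\mapsto x^{\overline{B}}$ is the subtorus $\{w\in(\mathbb{C}^*)^m : w^G=\mathbf{1}\}$ precisely when the saturation holds, that is, when $\overline{B}$ generates $\mathbb{Z}^{n+1}$. This gives the identity $\operatorname{im}=\ker(\text{character map }w\mapsto w^G)$ by duality of the exact sequence $0\to\mathbb{Z}^{m-n-1}\xrightarrow{G^\top}\ldots$, dualized over the divisible group $\mathbb{C}^*$.

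The same saturation makes the monomial map injective, so $(t,x)$ is uniquely determined by $w$. For (ii), we check that the inverse map from $y$ to $(t,x)$ is given by Laurent monomials in the $\langle d_i,y\rangle/u_i$, using a $\mathbb{Z}$-left-inverse of $\overline{B}^\top$. Both maps are then morphisms, inverse to each other on the defining equations, and this gives the scheme isomorphism.

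For (i), the columns only generate a finite-index lattice $\Lambda$. In that case the monomial map has finite kernel, isomorphic to $\mathrm{Hom}(\mathbb{Z}^{n+1}/\Lambda,\mathbb{C}^*)$. Its image is a finite union of translates of the subtorus cut out by $w^G=1$, or equivalently it is the identity component. The correspondence becomes finite-to-one in both directions, so dimensions agree. Concretely, $V_D(H_u)$ contains the image of $V_{\Tn}(F_u)$ under a finite map, and conversely each solution $y$ comes from finitely many $x$ up to the finite group action. We also note that $V_D(H_u)$ is built from the full $G$, which spans the same rational kernel, so extra torsion components can only add finitely many translates, each of the same dimension.

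The main obstacle is bookkeeping in the non-saturated case (i). One has to verify that $w^G=\mathbf{1}$ describes exactly a union of finitely many torsion translates of the image, each of which gives a component of the same dimension, so that no dimension is gained. We will handle this by applying the Smith normal form to $\overline{B}$, which reduces everything to a diagonal computation.
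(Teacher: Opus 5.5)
Your overall strategy is sound, but it takes a different route from the paper, and one claim in your key lemma is wrong in a way that affects (i).

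\textbf{Comparison with the paper.} The paper does not re-derive the duality. For the saturated case (ii) it cites Sottile's Theorem~6.3. For (i) it factors $\overline{B}=Q\widetilde{B}$, with $Q$ square nonsingular and the columns of $\widetilde{B}$ generating $\mathbb{Z}^{n+1}$. It then observes that $\overline{B}$ and $\widetilde{B}$ have the same integral Gale dual, and that the monomial map of $Q$ is finite and surjective, which reduces (i) to (ii).

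You instead work directly with the exact sequence of tori. The extra coordinate $t$ absorbs the projectivization, and a $\mathbb{Z}$-left inverse $A$ with $\overline{B}A=\mathrm{Id}$ gives the inverse morphism. The row of ones makes the $x$-part of $w^A$ homogeneous of degree $0$ in $y$, so it is well defined on $\mathbb{P}^{\ell-1}$. Checking that both composites are the identity modulo the defining ideals is routine: $\mathrm{Id}-A\overline{B}$ has image in $\ker_{\mathbb{Z}}\overline{B}$, hence equals $GN$ for an integer matrix $N$, and $u\star x^B$ lies in $\ker C$ modulo $\langle F_u\rangle$. This version is self-contained and treats (i) without a reduction, at the cost of length.

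\textbf{The error in the key lemma.} You claim that the image of $(t,x)\mapsto (t,x)^{\overline{B}}$ equals $\{w: w^G=\mathbf{1}\}$ precisely when the columns of $\overline{B}$ generate $\mathbb{Z}^{n+1}$. In (i) you then treat the image as possibly a single component of a disconnected group. In fact the image is always all of $\{w^G=\mathbf{1}\}$. Let $\Lambda$ be the lattice spanned by the columns of $\overline{B}$. Since $G$ is a $\mathbb{Z}$-basis of the saturated lattice $K=\ker_{\mathbb{Z}}\overline{B}$, we have $\{w^G=\mathbf{1}\}=\mathrm{Hom}(\mathbb{Z}^m/K,\mathbb{C}^*)\cong\mathrm{Hom}(\Lambda,\mathbb{C}^*)$, which is a connected torus. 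Moreover, restriction $\mathrm{Hom}(\mathbb{Z}^{n+1},\mathbb{C}^*)\to\mathrm{Hom}(\Lambda,\mathbb{C}^*)$ is surjective because $\mathbb{C}^*$ is divisible. Saturation of $\Lambda$ only controls injectivity: the kernel is the finite group $\mathrm{Hom}(\mathbb{Z}^{n+1}/\Lambda,\mathbb{C}^*)$, which you correctly identify.

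This is not cosmetic, because the patch you propose for the supposed extra translates would not work. Suppose $D(y)\star u^{-1}$ lay in a translate $\zeta\star T$ of the image $T$. Then $y$ would correspond to a zero of $F_{\zeta\star u}$, not of $F_u$. The theorem is asserted for every $u$, not generic $u$, so nothing forces $\dim V_{\Tn}(F_{\zeta\star u})=\dim V_{\Tn}(F_u)$.

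With the corrected lemma no patch is needed. Consider the map $(t,x)\mapsto y$ defined by $D(y)=t\,(u\star x^B)$, from $\mathbb{C}^*\times V_{\Tn}(F_u)$ to $\{y\in\mathbb{C}^{\ell}:\langle d_i,y\rangle\neq0 \text{ for all } i,\ (D(y)\star u^{-1})^G=\mathbf{1}\}$. It is surjective with finite fibres. These two sets are cones over $V_{\Tn}(F_u)$ and $V_D(H_u)$ respectively, so comparing their dimensions gives (i). Your planned Smith normal form computation would in fact reveal exactly this.
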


One can find a proof of this statement in \cite[Theorem~6.3]{sottile2011real}, assuming that
the columns of \(\overline{B}\)
generate \(\mathbb{Z}^{n + 1}\), or equivalently, that $B$ contains the zero column and the rest generate $\mathbb{Z}^n$ (up to a translation of $B$). Without this assumption, one may write the matrix
$\overline{B}=Q\widetilde B$, where $Q$ is a square nonsingular matrix and the columns of
\(\widetilde B\) generate \(\mathbb{Z}^{n + 1}\). The monomial
map induced by $Q$ is finite and surjective, so it preserves the dimension of the
solution set, although not necessarily its scheme structure or number
of connected components. Moreover, \(\overline B\) and \(\widetilde B\) have the
same Gale dual, proving the dimension equality in (i).

\subsection{Projected systems}
	Given $X\subset [m]$, we define the real vector space:
	\begin{equation}
    \label{eq: directionSpaces}
    L_X \coloneqq \spann_{\RR}(b_i - b_j : \: i,j \in X) \subset \mathbb{R}^n.\end{equation}
	We denote by  $L_{X}^{\perp} \subset \mathbb{R}^n$ the vector space orthogonal to $L_X$
	and $\tau_X: \mathbb{R}^n \xrightarrow{} L_X^{\perp}$ denotes a general linear projection with kernel $L_X$ and $\tau_X(\mathbb{Z}^n) \subset \mathbb{Z}^n$. For each subset $X \subset [m]$, we denote by $B_X$ the matrix with columns $(\tau_X(b_j))_{j \notin X}$. With this notation, we define the corresponding projected system as follows.

\begin{definition}
Let $F=C(u\star x^B)$ be a vertical system, where $C\in\mathbb{C}^{s\times m}$ and $B\in\ZZ^{n\times m}$. Let $X\subseteq [m]$ and let $C_X \in \CC^{s' \times m'}$ be the matrix in \eqref{equation:contraction}. The \textit{projected system} with respect to $X$ is
	\begin{equation}
		\label{eq: systemFI}
		F_{X} = C_{X}(u \star x^{B_{X}}).
	\end{equation}
We write $\ell_{X} \coloneqq m' - s'$.
\end{definition}

In \Cref{prop: contractedCircuit}, we show that if a vertical system is generically consistent, then there exists a circuit $\mathcal{C}$ of $M(C)$ such that the projected system $F_{\mathcal{C}}$ is also generically consistent. The proof of \Cref{prop: contractedCircuit} is based on the following lemma, which has a direct analogue in the theory of hyperplane arrangements and zonotopal algebras (\cite[Theorem~5.8]{HoltzRon2011} and \cite[Proposition~4.15 and Theorem~4.17]{ArdilaPostnikov2010}).

Given a vector space $V \subset \mathbb{C}^m$ and $c \in V$, we denote by $\supp(c) = \{i \in [m] : \: c_i \neq 0\} \subset [m]$ its support. We say that a vector $c \in V$ is a \textit{circuit vector} if for every other $c' \in V \setminus \{0\}$ with $\supp(c') \neq \supp(c)$, we have $\supp(c') \not\subset \supp(c)$. For any homogeneous polynomial function $P: V \xrightarrow[]{} \CC$ and $c \in V$, we denote the derivative in the direction of $c$ as:
$$\partial_c^iP(w) \coloneqq \frac{d^i}{d\lambda^i}P(w + \lambda c)_{\mid \lambda = 0}.$$

\begin{lemma}
\label{lemma: derivativeCircuit}
    Let $V \subset \mathbb{C}^m$ be a vector space of dimension $\ell$ that is not contained in any coordinate hyperplane and let $P: V \xrightarrow[]{} \CC$ be a homogeneous polynomial function of degree greater or equal than $m - \ell$. If for all circuit vectors $c$ with $k = |\supp(c)|$, we have that
    $$\partial_c^{k-1}P = 0,$$
    then $P = 0$.
\end{lemma}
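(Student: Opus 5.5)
I would argue by induction on $m-\ell$, restricting $P$ to coordinate hyperplane sections of $V$ and using the circuit-vector hypotheses to control its Taylor expansion along a circuit direction. In the base case $m=\ell$ we have $V=\mathbb{C}^m$, the circuit vectors are the multiples of the standard basis vectors $e_i$ with $k=1$, and the hypothesis $\partial^0_{e_i}P=P=0$ gives the claim directly. Throughout I use the following observation. For $i\in[m]$, set $V^{(i)}\coloneqq V\cap\{w_i=0\}$, which has dimension $\ell-1$ because $V$ is not contained in a coordinate hyperplane. Every circuit vector of $V^{(i)}$ is a circuit vector of $V$: if $c\in V^{(i)}$ and $c'\in V\setminus\{0\}$ had $\supp(c')\subsetneq\supp(c)$, then $c'_i=0$, so $c'\in V^{(i)}$, contradicting minimality in $V^{(i)}$. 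Moreover, directional derivatives along vectors of $V^{(i)}$ commute with restriction to $V^{(i)}$. So the hypotheses on $P$ pass to $P_{\mid V^{(i)}}$, once $V^{(i)}$ is viewed in the coordinate space obtained by deleting the coordinates $j$ that vanish identically on $V^{(i)}$. There are $t\ge 1$ such coordinates, namely those proportional to $w_i$ on $V$. This gives a configuration with $m'=m-t$ and $\ell'=\ell-1$, hence $m'-\ell'\le m-\ell$.

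For the inductive step, fix a circuit vector $c$ with $k=|\supp(c)|$ and an index $i\in\supp(c)$. Then $V=V^{(i)}\oplus\mathbb{C}c$, and I would expand
\[
P(v'+\lambda c)=\sum_{j\ge 0}\frac{\lambda^j}{j!}\,\partial_c^jP(v'),\qquad v'\in V^{(i)}.
\]
The hypothesis $\partial_c^{k-1}P=0$ on all of $V$ forces $\partial_c^jP=0$ for every $j\ge k-1$. Hence it suffices to show that $Q_j\coloneqq(\partial_c^jP)_{\mid V^{(i)}}$ vanishes for $0\le j\le k-2$. Each $Q_j$ is homogeneous of degree $d-j$, where $d\ge m-\ell$ is the degree of $P$. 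For a circuit vector $c'$ of $V^{(i)}$ with support size $k'$, the operators $\partial_{c'}$ and $\partial_c$ commute, and $\partial_{c'}^{k'-1}P=0$ on $V$. Therefore $\partial_{c'}^{k'-1}Q_j=0$, so each $Q_j$ satisfies the derivative hypotheses on the smaller configuration, and the inductive hypothesis would kill $Q_j$ as soon as $d-j\ge m'-\ell'$.

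The main obstacle is exactly this degree bookkeeping. The bound $m'-\ell'\le m-\ell$ handles $j=0$, which already shows that $P$ vanishes on every hyperplane $V^{(i)}$ and is therefore divisible by $\prod w_i$ over the parallel classes of coordinates. For $1\le j\le k-2$, however, I need $m'-\ell'\le m-\ell-j$. I would try to obtain this by slicing more cleverly. The complement $[m]\setminus\supp(c)$ is a hyperplane of the matroid $M(D)$, so I would replace $V^{(i)}$ by an intersection of $V$ with several coordinate hyperplanes indexed by elements of $\supp(c)$. Each extra cut removes at least one coordinate while lowering $\ell$ by at most one, and the freedom in choosing which coordinates to cut should gain one unit of $m'-\ell'$ per derivative taken. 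If this slicing argument does not close, my fallback is to combine the divisibility $P=\big(\prod w_i\big)\,\widetilde P$ obtained from the case $j=0$ with a second induction on $d$. Alternatively, I would appeal directly to the zonotopal-algebra statement (\cite[Theorem~5.8]{HoltzRon2011}, \cite[Proposition~4.15 and Theorem~4.17]{ArdilaPostnikov2010}): the joint kernel of the operators $\partial_c^{|\supp(c)|-1}$ consists of polynomials of degree at most $m-\ell-1$, which yields $P=0$ immediately.
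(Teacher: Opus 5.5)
\textbf{There is a genuine gap: the inductive step does not close, and none of your repairs fixes it.} Expanding along the circuit direction $c$, you must show $Q_j=(\partial_c^jP)_{\mid V^{(i)}}=0$ for all $0\le j\le k-2$. With $d=m-\ell$, the inductive hypothesis applies to $Q_j$ only when $d-j\ge m'-\ell'=m-\ell-(t-1)$, that is, only for $j\le t-1$. Here $t$ is the number of coordinates proportional to $w_i$ on $V$. In general $t-1<k-2$. For a generic plane $V\subset\mathbb{C}^4$ one has $t=1$ and $k=3$, so $Q_1$ is a linear form while the sliced configuration requires degree at least $m'-\ell'=2$.

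Your proposed repairs do not help:
\begin{itemize}
\item The ``cleverer slicing'' gives no mechanism by which each derivative lowers $m'-\ell'$ by one.
\item The second induction on $d$ fails. $\widetilde P=P/\prod w_i$ has degree $d$ minus the number of parallel classes, which can drop below $m-\ell$. By the product rule, $\widetilde P$ also does not inherit the conditions $\partial_{c}^{k-1}\widetilde P=0$.
\item Induction on $m-\ell$ is not well founded as set up, since $t=1$ gives $m'-\ell'=m-\ell$. You should induct on $\ell$ instead, with base case $\ell=1$: there $V=\mathbb{C}c$ with $\supp(c)=[m]$, and $\partial_c^{m-1}P=0$ kills $P$.
\end{itemize}

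\textbf{The missing idea is the one the paper uses.} Stop the Taylor expansion exactly where your degree bound works, and turn it into higher-order divisibility rather than into $P=0$ directly.
\begin{itemize}
\item For $0\le j\le t-1$ you have $\deg Q_j\ge m'-\ell'$, so induction gives $Q_j=0$. Any $v\in V\setminus V^{(i)}$ can replace $c$ here; the circuit hypotheses enter only through the inductive hypothesis on $V^{(i)}$ and the base case.
\item Hence $w_i^{\,t}$ divides $P$, not merely $w_i$.
\item The forms attached to different parallel classes are pairwise non-proportional on $V$, and the class sizes add up to $m$. So a product of linear forms of degree $m$, namely $\prod_{j=1}^m w_j$ restricted to $V$ up to a scalar, divides $P$.
\item First reduce to $\deg P=m-\ell$. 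A nonzero $P$ has a nonzero derivative of order $d-m+\ell$, and that derivative still satisfies every hypothesis, because constant-coefficient derivatives commute. Then $m>m-\ell$ forces $P=0$.
\end{itemize}

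\textbf{On the zonotopal-algebra fallback.} This is the only route in your proposal that could stand on its own, but as written it is an unchecked citation. To use it you would need to:
\begin{itemize}
\item set up the dictionary: the coordinate forms on $V$ become a vector configuration in $V^*$, circuit vectors become normals to its hyperplanes, and $|\supp(c)|$ becomes the number of forms off the hyperplane, so the operators $\partial_c^{|\supp(c)|-1}$ generate the internal power ideal;
\item verify that the cited results give top degree $m-\ell-1$ for its inverse system.
\end{itemize}
The paper invokes those works only as analogues and proves the lemma by the self-contained argument above.
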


\begin{proof}
The assertion for a polynomial of degree $d > m - \ell$ can be reduced to the case of a polynomial of degree $m - \ell$ by taking a nonzero partial derivatives of degree $d - m + \ell$.

For the case where $P$ has degree $m - \ell$, the proof follows by induction on $\ell$. If $\ell = 1$, then the only circuit vectors in $V$ have support equal to $[m]$ (as $V$ is not contained in any coordinate hyperplane). If we have a polynomial of degree $m - 1$ whose $(m - 1)$-th derivative equals $0$, then it must be the zero polynomial.

If $\ell > 1$, we may consider the coordinate functions $L_j: V \xrightarrow[]{} \CC$ sending a vector $w$ to $w_j$ and the vector spaces $T_j = \ker(L_j)$. Clearly, $T_j \subset V$ is a hyperplane as $V$ is not contained in any coordinate hyperplane. Moreover, any circuit vector in $T_j$ is also a circuit vector in $V$. For each $j \in \{1,\dots,m\}$, consider
$$\mathcal{X}_j = \{j' \in [m] :\: \ker L_{j'} = T_j\}$$
Note that $[m] = \bigsqcup_{j \in \mathcal{X}}\mathcal{X}_{j}$ for some set of representatives $\mathcal{X} \subset [m]$ and let $a_j = |\mathcal{X}_j|$ for $j \in [m]$. In particular, we can always restrict the support of $T_j$ to $\C^{m - a_j}$. For any $v \in V \setminus T_j$ and $0 \leq t \leq a_j - 1$, we may consider the function
$$P_t \coloneqq \partial_v^tP_{\: \mid T_j},$$
which has degree
$m - \ell - t \geq m - a_j - \ell + 1$. Using the hypothesis and that derivatives and restrictions commute, we derive that the functions $P_t$ satisfy the conditions of the lemma. More concretely, for all circuit vectors $c \in T_j$, we have
$$\partial_c^{k - 1}P_t=\partial_c^{k - 1}\partial_v^{t}P_{\mid T_j} = 0.$$
Therefore, $P_t = 0$ in $T_j$ for all $0 \leq t \leq a_j - 1$. As the function $P(w + \lambda v)$ equals
$\sum_{t \geq 0}\frac{\lambda^t}{t!}
(\partial_v^tP)(w)$, the polynomial $L_j^{a_j}$ divides $P$. If we apply this for all $j \in \mathcal{X}$, we deduce that $\prod_{j \in \mathcal{X}} L_j^{a_j}$ (which has degree $m > m - \ell$) divides $P$, implying that $P$ must be the zero polynomial in $V$.
\end{proof}

The following proposition follows from applying the previous lemma to the case where $V = \ker(C)$ and $P$ is a minor of the matrix $C \diag(w) B^{\top}$.

\begin{proposition}
\label{prop: contractedCircuit}
    Let $F = C(u \star x^B)$ be a vertical system. If $F$ is generically consistent, there exists a circuit $\mathcal{C}$ of $M(C)$ with $\dim \Delta_{\mathcal{C}} = r_C(\mathcal{C})$ such that $F_{\mathcal{C}}$ is generically consistent.
\end{proposition}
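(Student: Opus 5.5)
Apply \Cref{lemma: derivativeCircuit} with $V=\ker(C)$ and $P$ a nonvanishing maximal minor of $C\diag(w)B^{\top}$, then read off the circuit from the lowest-order coefficient of $P$ along a circuit vector.

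By \Cref{thm:genericConsistencyOSkarElisendaBeatriz}, some $w\in\ker(C)$ makes $C\diag(w)B^{\top}$ of rank $s$. Hence some $s\times s$ minor, viewed as a homogeneous polynomial $P$ of degree $s$ on $V$, is nonzero. By \Cref{rk: reduce cases}, $M(C)$ has no coloops, so $V$ is not contained in any coordinate hyperplane. Since $\dim V=\ell=m-s$, the degree of $P$ is $s=m-\ell$, so the lemma applies. It gives a circuit vector $c\in V$ with support $\mathcal{C}$, $|\mathcal{C}|=k$, such that $\partial_c^{k-1}P\neq 0$. Supports of circuit vectors of $\ker(C)$ are exactly the circuits of $M(C)$, so $\mathcal{C}$ is a circuit and $r_C(\mathcal{C})=k-1$.

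To interpret the derivative, change coordinates so that $C$ has the form \eqref{eq: rref} with $\mathcal{C}\setminus\{s+1\}\subseteq[s]$ and $c=D e_1$, i.e.\ $\lambda_1$ is the direction of $c$. Then row $i$ of \eqref{equatio_Consistencyn} involves $\lambda_1$ only for $i\in\mathcal{C}\cap[s]$, and there it contributes $\lambda_1P_{i1}(b_{s+1}-b_i)$. Since $\deg P=s$ and exactly $k-1$ rows contain $\lambda_1$, the derivative $\partial_c^{k-1}P$ is, up to a constant, the coefficient of $\lambda_1^{k-1}$, which is the top power. By multilinearity this coefficient equals
\[
\Big(\bigwedge_{i\in\mathcal{C}\cap[s]}P_{i1}(b_{s+1}-b_i)\Big)\wedge\Big(\bigwedge_{i\notin\mathcal{C}}\textstyle\sum_{j\ge2}P_{ij}\lambda_j(b_{s+j}-b_i)\Big),
\]
taken componentwise in the chosen minor. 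Its nonvanishing gives two conclusions at once.

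First, the vectors $b_{s+1}-b_i$ for $i\in\mathcal{C}\cap[s]$ are linearly independent. This gives $\dim\Delta_{\mathcal{C}}=k-1=r_C(\mathcal{C})$.

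Second, the remaining wedge factor stays nonzero after projecting modulo $L_{\mathcal{C}}$. Deleting the rows of $\mathcal{C}\cap[s]$ and the columns of $\mathcal{C}$ yields $C_{\mathcal{C}}$ in reduced form with matrix $(P_{ij})_{i\notin\mathcal{C},\,j\ge2}$. So the second factor, projected by $\tau_{\mathcal{C}}$, is exactly the polynomial $q$ of \Cref{lemma_1} for $F_{\mathcal{C}}$. Its nonvanishing shows, via \Cref{lemma_1} and \Cref{thm:genericConsistencyOSkarElisendaBeatriz}, that $F_{\mathcal{C}}$ is generically consistent.

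The main obstacle is this last identification, for two reasons. One must check that the factorization $\wedge^{k-1}L_{\mathcal{C}}\wedge(\cdot)\neq0$ is equivalent to the projected wedge being nonzero in $\wedge^{s-k+1}(\mathbb{R}^n/L_{\mathcal{C}})$, using a generic $\tau_{\mathcal{C}}$. One must also check that contracting the whole circuit matches the reduced-form deletion in \eqref{equation:contraction}, including the column $s+1$ becoming a loop. I would handle this by working with the full $\wedge^s$-valued polynomial rather than a single minor, so that only these wedge identities need to be verified.
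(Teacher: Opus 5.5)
Your proposal is correct and follows essentially the same route as the paper. You apply \Cref{lemma: derivativeCircuit} to a nonzero maximal minor on $\ker(C)$, and you read the top coefficient of $P$ along the resulting circuit vector as a wedge of a block spanning $L_{\mathcal{C}}$ with the rows of the contracted system; its nonvanishing gives $\dim\Delta_{\mathcal{C}}=r_C(\mathcal{C})$ and, after projecting by $\tau_{\mathcal{C}}$, the generic consistency of $F_{\mathcal{C}}$. Your explicit coordinates $c=De_1$ in the form \eqref{eq: rref} simply make the paper's stacked-matrix argument with $R=C\diag(c)B^{\top}$ and $M=Q\diag(w')\widetilde{B}^{\top}$ concrete, and the two identifications you flag as obstacles are routine linear algebra that check out.
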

\begin{proof}
    As $F$ is generically consistent, \Cref{thm:genericConsistencyOSkarElisendaBeatriz} implies that there exists a minor of the matrix $C\diag(w)B^{\top}$ which is a nonzero polynomial function on $\ker C \subset \mathbb{C}^m$.

    We denote this minor as $P: \ker(C) \xrightarrow{} \mathbb{C}$ and note that it is a function of degree $s = m - \ell$. Therefore, \Cref{lemma: derivativeCircuit} implies that there exists a circuit vector $c \in \ker C$, i.e. a circuit $\mathcal{C}$ of $M(C)$, such that $\mC = \supp(c)$ and $r = r_C(\mathcal{C}) = |\mathcal{C}| - 1$ and $\partial_c^rP \neq 0$.

    Note that the matrix $C \diag(c)B^{\top}$ can have, at most, rank $r$, as it only involves the columns of $C$ indexed by $\mC$. Therefore, using the multilinearity of the chosen minor, the function $P(w + tc)$ has degree at most $r$ in $t$. Since $\partial^r_cP \neq 0$, there exists a vector $w \in \ker(C)$ such that the coefficient of $t^r$ in $P(w+tc)$ is nonzero. Next, we write $C$ in row-reduced echelon form as in \eqref{eq: rref} (in particular, preserving the function $P$ up to constants). This provides a matrix $Q$ for the contracted system $F_{\mathcal{C}}$, which must have rank $s - r$.

    As $c \in \ker C$, the matrix $R \coloneqq C\diag(c)B^{\top}$ has a first block whose rows span $L_{\mC}$ and then a block of zeros. On the other hand, if we let $Q \coloneqq C_{\mC}$, the bottom block of $C\diag(w + tc)B^{\top}$ has the form
    $$M = Q\diag(w')\widetilde{B}^{\top},$$
    where $w' = (w_j)_{j \notin \mathcal{C}}$ and $\widetilde{B}$ only preserves the columns of $B$ corresponding to $j \notin \mC$. All in all, the coefficient of $t^r$ in the function $P(w + tc)$ is a minor of the matrix
    \begin{equation}
    \label{eq: RM}
        \begin{pmatrix}
        R \\
        M
    \end{pmatrix}.
    \end{equation}
    As this coefficient is nonzero, the matrix in \eqref{eq: RM} must have rank $s$. In particular, $R$ has rank $r$ implying that $\dim \Delta_{\mC} = \dim L_{\mathcal{C}} = r$ and the rows in $M$ are linearly independent modulo the rows of $R$, which after projecting to $L_{\mC}^{\perp}$, implies that
    $$\rank(Q\diag(w')B_{\mC}^{\top}) = s - r.$$
    By \Cref{thm:genericConsistencyOSkarElisendaBeatriz}, this implies that $F_{\mC}$ is generically consistent.
\end{proof}

\section{Generic consistency, iterated circuits and $G$-spanning flags}
\label{section:3}

In this section, we provide the main characterization of generic consistency in terms of the combinatorial properties of the iterated circuits of $M(C)$ and their Newton polytopes.

\subsection{Iterated circuits and generic consistency}

The first result of this combinatorial characterization is the following theorem.

\begin{theorem}
	\label[theorem]{thm:mainTheorem}
	Let $C \in \mathbb{C}^{s \times m}$ be of rank $s$ and let $B \in \mathbb{Z}^{n \times m}$. The following are equivalent:
	\begin{enumerate}[label=(\roman*)]
		\item The vertically parametrized system $F = C(u \star x^B)$ is generically consistent.
		\item There exists an iterated circuit $(\mathcal{C}_1,\dots,\mathcal{C}_k)$ of $M(C)$,  such that
		$$\dim\big(\sum_{j = 1}^k\Delta_{\mathcal{C}_j}\big) = s.$$
	\end{enumerate}
\end{theorem}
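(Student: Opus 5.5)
We prove both implications by induction on $s$, the rank of $C$, using \Cref{thm:genericConsistencyOSkarElisendaBeatriz} throughout to turn generic consistency into the rank condition on $C\diag(w)B^{\top}$. The projected system $F_{\mathcal{C}}$ (built from the contraction $C_{\mathcal{C}}$ and the projection $\tau_{\mathcal{C}}$) is what lets us peel off one circuit at a time. It has rank $s-r_C(\mathcal{C})<s$, so the induction hypothesis applies to it. In the base case $s=0$ (or when the contraction has no circuits left), both conditions hold vacuously. The empty iterated circuit gives the sum $\{0\}$, of dimension $0=s$.

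\textbf{(i) $\Rightarrow$ (ii).} Suppose $F$ is generically consistent. By \Cref{prop: contractedCircuit} there is a circuit $\mathcal{C}_1$ of $M(C)$ with $\dim\Delta_{\mathcal{C}_1}=r_C(\mathcal{C}_1)=|\mathcal{C}_1|-1$ such that $F_{\mathcal{C}_1}$ is generically consistent. The matroid of $C_{\mathcal{C}_1}$ is the contraction $M(\mathcal{C}_1)$, of rank $s-r_C(\mathcal{C}_1)$, and its exponent vectors are $\tau_{\mathcal{C}_1}(b_j)$ for $j\notin\mathcal{C}_1$. By induction there is an iterated circuit $(\mathcal{C}_2,\dots,\mathcal{C}_k)$ of $M(\mathcal{C}_1)$ with
\[
\dim\sum_{j\ge2}\tau_{\mathcal{C}_1}(\Delta_{\mathcal{C}_j})=s-r_C(\mathcal{C}_1).
\]
Contracting $\mathcal{C}_1$ and then $\mathcal{C}_2\cup\dots\cup\mathcal{C}_{i}$ is the same as contracting $X_i$, so $(\mathcal{C}_1,\dots,\mathcal{C}_k)$ is an iterated circuit of $M(C)$. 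The linear span of $\sum_j\Delta_{\mathcal{C}_j}$ (after translation) is $L_{\mathcal{C}_1}+\sum_{j\ge2}L_{\mathcal{C}_j}$. Since $\tau_{\mathcal{C}_1}$ has kernel $L_{\mathcal{C}_1}$, this space has dimension
\[
\dim L_{\mathcal{C}_1}+\dim\tau_{\mathcal{C}_1}\Big(\sum_{j\ge 2}L_{\mathcal{C}_j}\Big)=r_C(\mathcal{C}_1)+\bigl(s-r_C(\mathcal{C}_1)\bigr)=s.
\]
There is one subtlety: circuits of size $1$ in the contraction (loops) must be excluded. This is harmless, because a loop makes a row of $C_{\mathcal{C}_1}$ monomial only if it were a coloop, and we may discard loops as in \Cref{rk: reduce cases}.

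\textbf{(ii) $\Rightarrow$ (i).} Suppose $(\mathcal{C}_1,\dots,\mathcal{C}_k)$ satisfies \eqref{eq:maximalDimension}. By \Cref{lemma:simplices}, $\dim\Delta_{\mathcal{C}_1}=|\mathcal{C}_1|-1=r_C(\mathcal{C}_1)$, and $\sum_{j}\dim\Delta_{\mathcal{C}_j}=s$. Reading the dimension computation above backwards, $(\mathcal{C}_2,\dots,\mathcal{C}_k)$ is an iterated circuit of $M(\mathcal{C}_1)$ whose projected polytopes have Minkowski sum of dimension $s-r_C(\mathcal{C}_1)$. By induction, $F_{\mathcal{C}_1}$ is generically consistent, so there is $w'\in\ker C_{\mathcal{C}_1}$ with
\[
\rank\bigl(C_{\mathcal{C}_1}\diag(w')B_{\mathcal{C}_1}^{\top}\bigr)=s-r.
\]
The remaining task is to lift this to $F$. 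First extend $w'$ to some $w\in\ker C$. Writing $C$ in echelon form adapted to an independent subset of $\mathcal{C}_1$, the bottom rows of $C\diag(w)$ involve only columns outside $\mathcal{C}_1$ and equal $C_{\mathcal{C}_1}\diag(w')$. Let $c$ be the circuit vector with support $\mathcal{C}_1$. Consider $w+tc$, exactly as in the proof of \Cref{prop: contractedCircuit}. The matrix $C\diag(w+tc)B^{\top}$ has top block $tR+(\text{terms independent of }t)$, where the rows of $R$ span $L_{\mathcal{C}_1}$ and $\rank R=r$ because $\dim\Delta_{\mathcal{C}_1}=r$. Its bottom block is $M$, whose rows are independent modulo $L_{\mathcal{C}_1}$ because their images under $\tau_{\mathcal{C}_1}$ have rank $s-r$.

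The coefficient of $t^r$ in a suitable maximal minor is therefore a minor of $\begin{pmatrix}R\\ M\end{pmatrix}$, which has rank $r+(s-r)=s$. So for large generic $t$ the matrix has rank $s$, and \Cref{thm:genericConsistencyOSkarElisendaBeatriz} gives generic consistency. The main obstacle is this lifting step. We must check that the bottom block of $C\diag(w+tc)B^{\top}$ really is independent of $t$ and agrees with $C_{\mathcal{C}_1}\diag(w')\widetilde B^{\top}$. This holds because $c$ is supported on $\mathcal{C}_1$, and those columns are zero in the bottom rows once the rows are reduced along a basis of $\mathcal{C}_1$. We must also choose the minor columns so that the $t^r$ coefficient factors as (an $r$-minor of $R$ on $L_{\mathcal{C}_1}$) times (an $(s-r)$-minor of the projected $M$). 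This factorization is a block-triangular determinant computation after a change of basis of $\mathbb{R}^n$ adapted to $L_{\mathcal{C}_1}\oplus L_{\mathcal{C}_1}^{\perp}$.
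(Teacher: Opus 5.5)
Your proof is correct.

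\textbf{(i)$\Rightarrow$(ii).} Here you follow the paper's argument. You peel off a circuit $\mathcal{C}_1$ via \Cref{prop: contractedCircuit}, apply induction to the projected system $F_{\mathcal{C}_1}$, and reassemble using $\ker\tau_{\mathcal{C}_1}=L_{\mathcal{C}_1}$. The paper inducts on $\ell$ with base case $\ell=1$, while you induct on $s$ with base case $s=0$; both work.

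\textbf{(ii)$\Rightarrow$(i).} Here your route is genuinely different. The paper argues directly, with no induction:
\begin{itemize}
\item it picks $j_i\in\mathcal{C}_i$ so that $\mathcal{D}=\bigcup_i(\mathcal{C}_i\setminus\{j_i\})$ is a basis of $M(C)$, and row-reduces $C$ with respect to $\mathcal{D}$;
\item it notes that the columns of $P$ indexed by $j_1,\dots,j_k$ have staircase support;
\item it reads off that the coefficient of $\prod_i\lambda_i^{d_i}$ in the polynomial $q(\lambda)$ of \Cref{lemma_1} is a single nonzero product.
\end{itemize}
You instead prove the converse of \Cref{prop: contractedCircuit}: if $\dim\Delta_{\mathcal{C}}=r_C(\mathcal{C})$ and $F_{\mathcal{C}}$ is generically consistent, then so is $F$. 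You do this by lifting $w'\in\ker C_{\mathcal{C}_1}$ to $w\in\ker C$ and taking the $t^{r}$-coefficient of a maximal minor of $C\diag(w+tc)B^{\top}$. This checks out. The block-triangular factorization you worry about at the end is not needed. The stacked matrix with blocks $R$ and $M$ already has rank $s$, since the rows of $R$ form a basis of $L_{\mathcal{C}_1}$ and the rows of $M$ are independent modulo $L_{\mathcal{C}_1}$. So some $s$-minor of it is nonzero.

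\textbf{Comparison.} The paper's argument gives an explicit certificate, namely a specific monomial of $q(\lambda)$ with nonzero coefficient, and needs neither induction nor projected systems. Yours yields a recursive characterization: $F$ is generically consistent if and only if some circuit $\mathcal{C}$ has $\dim\Delta_{\mathcal{C}}=r_C(\mathcal{C})$ and $F_{\mathcal{C}}$ is generically consistent. Both directions then run through the same contraction-and-projection mechanism.

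\textbf{Two small wording corrections.}
\begin{itemize}
\item Loops of $M(\mathcal{C}_1)$ are just zero columns of $C_{\mathcal{C}_1}$. They lie in no circuit of size at least two and do not affect the system. It is coloops, not loops, that produce monomial rows, and generic consistency excludes them.
\item Your parenthetical base case is inaccurate. A contraction of positive rank with no circuits makes both conditions fail rather than hold vacuously. The base case $s=0$ alone suffices.
\end{itemize}
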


\begin{proof}
	To prove that (ii) implies (i), we assume that there exists an iterated circuit $(\mathcal{C}_1,\dots,\mathcal{C}_k)$ such that $
		\dim\Big(\sum_{i = 1}^k\Delta_{\mathcal{C}_i}\Big) = s.$
	Let $d_i \coloneqq \dim(\Delta_{\mathcal{C}_i})$ for $i = 1,\dots,k$. By \Cref{lemma:simplices}, the set of points $\{b_j\}_{j \in \mathcal{C}_i}$ is affinely independent for $i \in \{1,\dots,k\}$. Therefore, we can pick $j_i \in \mathcal{C}_i$ such that
	$$\bigwedge_{i = 1}^k\bigwedge_{j \in \mathcal{C}_i \setminus \{j_i\}}(b_j - b_{j_i}) \neq 0.$$
	We note that $\mathcal{D} \coloneqq \cup_{i = 1}^k\mathcal{C}_i \setminus \{j_1,\dots,j_k\}$ is a basis of $M(C)$  and write $C$ in row-reduced echelon form with respect to $\mathcal{D}$. Let $P \in \mathbb{C}^{s \times \ell}$ be the matrix in \eqref{eq: rref} with respect to the basis $\mathcal{D}$, partitioning $[m]$ into $\mathcal{D} \sqcup ([m] \setminus \mathcal{D})$ . After reordering the columns, we may assume that $\{j_1,\dots,j_k\} = \{s + 1,\dots,s + k\}$, i.e. the first $k$ columns of $P$ correspond to the subset of the remaining elements in the iterated circuit. We denote $D_i = d_1 + \dots + d_i$ for all $i = 1,\dots,k$. As $\mathcal{C}_i$ is a circuit of $M(\mathcal{C}_1 \cup \dots \cup \mathcal{C}_{i-1})$, the support of the $i$-th column is contained in the first $D_i$ rows. In particular, $$P_{j,i} = \begin{cases}
		0 & D_i < j \\
		\neq 0 & D_{i-1} < j \leq D_i
	\end{cases}.$$ Using this, we deduce that the coefficient of the monomial $\prod_{i = 1}^k\lambda_i^{d_i}$ in the polynomial $q(\lambda)$ in \eqref{eq:consistencyPolynomial}
	equals:
	$$\prod_{i = 1}^k\prod_{j = D_{i-1} + 1}^{D_{i}}P_{j,i} \otimes \bigwedge_{i = 1}^k\bigwedge_{j = D_{i-1} + 1}^{D_i}(b_j - b_{s + i}) \neq 0.$$
 Here, we used the convention that $D_0 = 0$. Therefore, by \Cref{lemma_1}, $F = C(u \star x^B)$ is generically consistent.

	For the other implication, we assume that $F$ is generically consistent and proceed by induction on $\ell$. If $\ell = 1$, the polynomial $q(\lambda)$ must be of the form
	$$\Big(\prod_{i = 1}^sP_{i,1} \otimes \bigwedge_{i = 1}^s  \big(b_{s + 1} - b_i \big) \Big)\lambda_1.$$
	Therefore, $q(\lambda)$ is not a zero polynomial if and only if the unique circuit $\mathcal{C}$ in $M(C)$ satisfies $\dim \Delta_{\mathcal{C}} = s$.

    If $\ell > 1$, we use \Cref{prop: contractedCircuit} to deduce that there exists a circuit $\mathcal{C}_1$ such that $F_{\mathcal{C}_1}$ is generically consistent. If $d \coloneqq  r_C(\mathcal{C}_1)$, then the number of rows of $F_{\mC_1}$ is $s - d$, while the number of nonzero columns is, at most, $m - d - 1$. Therefore,
    $$\ell_{\mC_1} = m - d - 1 - (s - d) < \ell.$$
    Applying the induction hypothesis, we deduce that there exists an iterated circuit $\mathcal{C} = (\mathcal{C}_2,\dots,\mathcal{C}_k)$ of $M(\mathcal{C}_1)$ such that
    $$\dim\big(\sum_{j = 2}^k\tau_{\mathcal{C}_1}(\Delta_{\mathcal{C}_j})\big) = s - d$$
    As $\ker \tau_{\mathcal C_1} = L_{\mathcal{C}_1}$, $(\mC_1,\dots,\mC_k)$ is an iterated circuit of $M(C)$ with $\dim \big(\sum_{j = 1}^k\Delta_{\mC_j}\big) = s$, concluding the proof.
\end{proof}

\begin{example}
	\label[example]{example:section3}
	Consider the vertically parametrized system $F = C(u \star x^B)$ from \Cref{example: pyramid}. The set $\mathcal{C} = \{1,2,4\}$ is a circuit of $M(C)$. Additionally, $L_{\mathcal{C}} = \{z \in \mathbb{R}^3 : \: z_1 = z_2\}$ and its orthogonal space is $ L_{\mathcal{C}}^{\perp} = \{z \in \mathbb{R}^3 : \: z_3 = z_1 + z_2 = 0\}.$ Note that $L_{\mathcal{C}}^{\perp}$ is $1$-dimensional. We consider the linear projection
	$$\tau_{\mathcal{C}}: \mathbb{R}^3 \xrightarrow[]{} L_{\mathcal{C}}^{\perp}, \quad (z_1,z_2,z_3) \xrightarrow[]{} (z_1 - z_2, z_2 - z_1, 0).$$
	The contraction with respect to $I = \{1,2\}$ gives rise to the matrix
	$C_{\mathcal{C}} = \begin{pmatrix}
		1 & 1
	\end{pmatrix}$ which corresponds to the columns $\{3,5\}$ of $C$.
	Hence, the corresponding projected system $F_{\mathcal{C}} = C_{\mC}(u \star x^{B_{\mathcal{C}}})$ equals
	$$
	u_3xy^{-1} + u_5 x^{-1}y = 0.
	$$
	Although the polynomial defining $F_{\mC}$ involves two different variables, its Newton polytope lies in a $1$-dimensional linear space. We note that $\mathcal{C} = (\mathcal{C}_1,\mC_2)$ with $\mC_1 = \{1,2,4\}$ and $\mC_2 = \{3,5\}$ gives an iterated circuit of $M(C)$ such that $\dim (\Delta_{\mC_1}+\Delta_{\mC_2})=3$. By \Cref{thm:mainTheorem}, $F$ is generically consistent.
\end{example}

\subsection{$G$-spanning flags and generic consistency}

In this subsection, we give yet another characterization of generic consistency, this time using the Gale dual system of $F = C(u \star x^B)$. In particular, the characterization relies on the following definition.

\begin{definition}
\label[definition]{def: nonsplitting_flag}
Let $H = \left(D(y)\star\frac{1}{u}\right)^G-\mathbf{1}_{m-n-1}$ be the Gale dual of a vertical system $F = C(u \star x^B)$. A $G$-\textit{spanning flag} of length $k \in \mathbb{Z}_{> 0}$ is a flag of flats in $M(D)$ of the form
$$\emptyset = E_0 \subset E_1 \subset \dots \subset E_{k} $$
satisfying $r_D(E_j) = j$ for all $j = 1,\dots,k$ and if we define
\begin{equation}
\label{eq: si}
s_i \coloneqq \sum_{j \in E_i \setminus E_{i-1}}g_j, \quad i = 1,\dots,k,
\end{equation}
then,
\begin{equation}
\label{eq: Gspanning}
\spann_{\mathbb{R}} \langle s_j : \; j = 1,\dots,k - 1 \rangle = \spann_{\R} \langle g_j : \: j \in [m] \rangle.
\end{equation}
\end{definition}

\begin{remark}
\label[remark]{rk: excludesl}
The reason for excluding $s_k$ in \eqref{eq: Gspanning} is that if the flag is complete and $E_{\ell} = [m]$, then
$$\sum_{i = 1}^{\ell}s_i = \sum_{j = 1}^mg_j = 0,$$
implying that $s_{\ell}$ lies in the vector space spanned by $s_1,\dots,s_{\ell - 1}$.
\end{remark}

\begin{theorem}
\label{thm: GspanningFlag}
Let $F = C(u \star x^B)$ be a vertical system and let $H = \left(D(y)\star\frac{1}{u}\right)^G-\mathbf{1}_{m-n-1}$ be its dual vertical system. The following are equivalent:
\begin{enumerate}[label=(\roman*)]
    \item There exists a $G$-spanning flag of length $\ell$.
    \item There exists an iterated circuit $(\mathcal{C}_1,\dots,\mathcal{C}_k)$ of $M(C)$,  such that
            \begin{equation}
            \label{eq: sumDELTAS2}
				\dim\Big(\sum_{i = 1}^k\Delta_{\mathcal{C}_i}\Big) = s.
            \end{equation}
\end{enumerate}

\end{theorem}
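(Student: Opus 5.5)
The plan is to translate both conditions into a single linear-algebra statement about a partition of $[m]$, using \Cref{prop: dualityIteratedFlags} for the combinatorial side and Gale duality between $\overline{B}$ and $G$ for the metric side.

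\textbf{Step 1: matching flags and iterated circuits.} A $G$-spanning flag of length $\ell$ is a complete flag $\emptyset=E_0\subset\dots\subset E_\ell$ of flats of $M(D)$. Since $E_\ell$ has rank $\ell$, it is a flat of full rank, so $E_\ell=[m]$. Set $A_i\coloneqq E_{\ell-i+1}\setminus E_{\ell-i}$ for $i=1,\dots,\ell$. These sets are nonempty and partition $[m]$. By \Cref{prop: dualityIteratedFlags}, applied with $k=\ell$ and allowing singletons, $(A_1,\dots,A_\ell)$ is an iterated circuit of $M(C)$ once the singletons are removed. The bottom set $E_0=\emptyset$ is independent, so $r_C(\bigcup A_i)=s$.

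Conversely, let $(\mathcal{C}_1,\dots,\mathcal{C}_k)$ be an iterated circuit satisfying \eqref{eq: sumDELTAS2}. By the proof of \Cref{lemma:simplices}, $\sum_i(|\mathcal{C}_i|-1)=s$, so $X_k=\bigcup\mathcal{C}_i$ has rank $s$. Each remaining element $j\notin X_k$ is then a loop of the contraction, so we append the singletons $\{j\}$ one by one. This gives $k+(m-|X_k|)=m-s=\ell$ parts, hence by \Cref{prop: dualityIteratedFlags} a complete flag $E_0=\emptyset\subset\dots\subset E_\ell=[m]$ in $M(D)$. Singletons contribute points to the Minkowski sum, so they do not change its dimension. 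It therefore suffices to prove the following claim for a partition $[m]=A_1\sqcup\dots\sqcup A_\ell$ arising this way:
\[
\dim\Big(\sum_i\Delta_{A_i}\Big)=s \iff \spann\langle s_1,\dots,s_{\ell-1}\rangle=\spann\langle g_j: j\in[m]\rangle .
\]
Here the $s_i$ are indexed via the flag, so the index reversal $i\leftrightarrow \ell-i+1$ just permutes them. The dropped vector is irrelevant because $\sum_i s_i=G^{\top}\mathbf 1=0$, as in \Cref{rk: excludesl}.

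\textbf{Step 2: the linear-algebra claim.} Let $U\coloneqq\spann\{e_j-e_{j'}: j,j'\in A_i \text{ for some } i\}\subset\R^m$, which has $\dim U=m-\ell=s$. The image $\overline{B}(U)$ equals $\{0\}\times\spann\{b_j-b_{j'}\}$, and this has dimension $\dim\sum_i\Delta_{A_i}$. Hence the left side of the claim holds iff $U\cap\ker\overline{B}=0$.

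Taking orthogonal complements, $U^{\perp}=\spann\{\mathbf 1_{A_i}\}$ and $(\ker\overline{B})^{\perp}=\operatorname{row}(\overline{B})$. So $U\cap\ker\overline{B}=0$ iff $U^\perp+\operatorname{row}(\overline{B})=\R^m$. This in turn holds iff the image of $U^\perp$ in $\R^m/\operatorname{row}(\overline{B})$ is everything. Now $G^{\top}$ induces an isomorphism $\R^m/\operatorname{row}(\overline{B})\cong\R^{m-n-1}=\spann\langle g_j\rangle$, since $\ker G^{\top}=(\operatorname{col} G)^\perp=(\ker\overline{B})^\perp$. Under this isomorphism $\mathbf 1_{A_i}\mapsto s_i$. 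This gives exactly the right side of the claim.

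\textbf{Main obstacle.} The delicate point is the bookkeeping in Step 1. One must check that padding with singleton loops really produces a complete flag of length exactly $\ell$, which uses the count $\sum(|\mathcal{C}_i|-1)=s$ from \Cref{lemma:simplices}. One must also check that $E_\ell=[m]$ and that the indexing of the $s_i$ matches the parts. One small technical point: if $\overline{B}$ had rank less than $n+1$, the identification of $\R^m/\operatorname{row}(\overline{B})$ with the span of the $g_j$ still holds, because both sides have dimension $m-\rank\overline{B}$.
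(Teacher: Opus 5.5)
Your proposal is correct and follows essentially the paper's proof. You pass between complete flags of $M(D)$ and iterated circuits padded by singleton loops via \Cref{prop: dualityIteratedFlags}, reduce $\dim\big(\sum_i\Delta_{A_i}\big)=s$ to $U\cap\ker\overline{B}=0$ by rank--nullity, and then translate this into the spanning condition on the $s_i$. The only cosmetic difference is that you dualize the last step (orthogonal complements and the isomorphism $\mathbb{R}^m/\operatorname{row}(\overline{B})\cong\spann\langle g_j\rangle$ induced by $G^{\top}$). The paper instead writes $x=Gy$ and reads off directly that $U\cap\ker\overline{B}\cong\{y:\langle s_i,y\rangle=0 \text{ for all } i\}$.
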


\begin{proof}
Let $(\mathcal{C}_1,\dots,\mathcal{C}_k)$ be an iterated circuit. By \Cref{prop: dualityIteratedFlags}, there exists a flag
$$E_{\ell - k} \subset \dots \subset E_{\ell} = [m], \quad r_D(E_i) = i,$$
for $i = \ell - k,\dots,\ell$. Moreover, $E_{\ell - k}$ is independent if and only if $r_C(\cup_{i = 1}^k\mathcal{C}_i) = s$. If $E_{\ell - k}$ is independent, then the flag can be extended to a complete flag
\begin{equation}
\label{eq: completeFlag}
\emptyset = E_{0} \subset E_1 \subset \dots \subset E_{\ell} = [m], \quad r_D(E_i) = i,
\end{equation}
by considering any flag of independent sets contained in $E_{\ell - k}$.

Next, we consider the vector subspace of $\mathbb{R}^m$:
$$U \coloneqq \{x \in \RR^m : \: \supp(x) \subset \bigcup_{i = 1}^k\mathcal{C}_i, \: \sum_{j \in \mathcal{C}_i}x_j = 0, \: \forall i \in \{1,\dots,k\}\},$$
which has dimension $r_C\big(\bigcup_{i = 1}^k\mathcal{C}_i\big)$. In addition, consider the linear map defined by the matrix $\overline B$, i.e.
$$\overline{B}: \RR^{m} \xrightarrow[]{} \mathbb{R}^{n + 1}, \quad x \xrightarrow[]{} \overline{B} \cdot x.$$
Note that $U$ is the direct sum of the vector spaces
\[
\Big\{(x_j)_{j\in\mathcal C_i}\in\R^{|\mathcal C_i|} \: :
\sum_{j\in\mathcal C_i}x_j=0\Big\}.
\]
The image of each of these vector subspaces under $\overline{B}$ is isomorphic to the real vector spaces $L_{\mathcal{C}_i}$ from \eqref{eq: directionSpaces}, adding a first component equal to zero as the elements in $U$ satisfy that $\sum_{i = 1}^m x_i = 0$. Hence, we deduce that $$\dim \overline{B}(U) = \dim \big(\sum_{i = 1}^k\Delta_{\mathcal{C}_i}\big).$$
Using the rank-nullity theorem, we deduce that
\begin{equation}
\label{eq: rankNullity}
\dim\big(\sum_{i = 1}^k\Delta_{\mathcal{C}_i}\big) = r_C\Big(\bigcup_{i = 1}^k\mathcal{C}_i\Big) - \dim(U \cap \ker(\overline{B})).
\end{equation}
As the rank of $M(C)$ equals $s$, we have that $\dim(\sum_{i = 1}^k\Delta_{\mathcal{C}_i}) = s$ if and only if $r_C\big(\bigcup_{i = 1}^k\mathcal{C}_i\big) = s$ and $U \cap \ker(\overline{B}) = 0$.

Finally, we note that $x \in \ker(\overline{B})$ if and only if there exists $y \in \mathbb{R}^{m - n - 1}$ such that $x = Gy$, i.e. $x_i = \langle g_i, y\rangle$ for all $i = 1,\dots,m$. Using the complete flag in \eqref{eq: completeFlag}, we let $s_i$ be as in \eqref{eq: si} and note that $x \in U \cap \ker(\overline{B})$ if and only if
$$ \langle s_i,y \rangle = 0, \quad \forall i \in \{1,\dots,\ell\}, $$
for $x = G y$. Therefore, $U \cap \ker(\overline{B}) = 0$ which happens if and only if,
$$\spann_{\mathbb{R}} \langle s_j : \; j = 1,\dots,\ell \rangle = \spann_{\RR} \langle g_j : \: j \in [m] \rangle. $$
Using \Cref{rk: excludesl},  we conclude the proof of (ii)$\Rightarrow$(i). Conversely, by \Cref{prop: dualityIteratedFlags}, a complete flag of flats of $M(D)$ comes from an iterated circuit after omitting the sets $E_i\setminus E_{i-1}$ with one element, which impose $x_j=0$ on $U$ in both cases, and the same computation proves (i)$\Rightarrow$(ii).
\end{proof}

This result concludes the proof of \Cref{thm:mainIntro}. The following corollary is a natural consequence of this result.

\begin{corollary}
\label[corollary]{cor: GaleDual}
Let $H = (D(y) \star \frac{1}{u})^G - \mathbf{1}$ be a dual vertical system. The following are equivalent:
\begin{enumerate}[label=(\roman*)]
    \item There exists a $G$-spanning flag of length $\ell$.
   \item The dual vertical system $H = (D(y) \star \frac{1}{u})^G - \mathbf{1}$ is generically consistent.
\end{enumerate}
\end{corollary}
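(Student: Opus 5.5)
The plan is to show that, by \Cref{thm: GspanningFlag}, condition (i) is equivalent to the existence of an iterated circuit $(\mathcal C_1,\dots,\mathcal C_k)$ of $M(C)$ with $\dim\big(\sum_i\Delta_{\mathcal C_i}\big)=s$. So it suffices to show that this condition is equivalent to generic consistency of $H$. For that I will apply the linear-algebra criterion of \Cref{thm:genericConsistencyOSkarElisendaBeatriz}, in the form used in the proof of \Cref{thm: GspanningFlag}, directly to $H$.

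\textbf{Step 1: rewrite $H$ as a vertical system.} In $y$, the $j$-th equation of $H$ is $\prod_i \langle d_i,y\rangle^{g_{ij}}=\prod_i u_i^{g_{ij}}$, with parameters $v_j=\prod_i u_i^{g_{ij}}$. These $v_j$ are generic because $G$ has full column rank. Pass to the variables $z_i=\langle d_i,y\rangle$, $i\in[m]$. These coordinates cut out the linear space $\operatorname{im}(D)=\ker(C)$, that is, the equations $Cz=0$, which are linear in $z$ with coefficient matrix $C$. Taking logarithms, the equations $z^G=v$ describe a translate of a subtorus. Hence $V_D(H_u)$ is the intersection, inside $(\mathbb C^*)^m/\mathbb C^*$, of $\mathbb P(\ker C)$ with a generic translate $t\cdot T_G$ of the subtorus $T_G=\{z: z^G=1\}$. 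This subtorus is the image of the monomial map $x\mapsto x^{\overline B}$ (up to finite index).

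\textbf{Step 2: dimension count.} The expected dimension of $V_D(H_u)$ is $(\ell-1)-(m-n-1)=\ell-m+n$, and this equals $\dim V_{\Tn}(F_u)=n-s$, in agreement with \Cref{thm:galeDualityVerticalSystems}. By a Kleiman-type transversality argument over the translates, a generic translate meets $\mathbb P(\ker C)$ in the expected dimension exactly when some point $w$ of $\mathbb P(\ker C)\cap(\mathbb C^*)^m$ has tangent spaces spanning transversally. Concretely, the condition is that $T_w\ker(C)$, pulled back by $\diag(w)^{-1}$, together with $\operatorname{im}\overline B^{\top}$, spans a space of the expected dimension. This amounts to $C\diag(w)\overline B^{\top}$, equivalently $C\diag(w)B^{\top}$ (since $w\in\ker C$), having rank $s$. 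This is exactly criterion (ii) of \Cref{thm:genericConsistencyOSkarElisendaBeatriz} for $F$. So $H$ is generically consistent if and only if $F$ is.

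\textbf{Step 3: conclude.} Combining Step 2 with \Cref{thm:mainTheorem} and \Cref{thm: GspanningFlag} gives the chain (ii) $\iff$ $F$ generically consistent $\iff$ there is an iterated circuit with $\dim=s$ $\iff$ (i).

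\textbf{Main obstacle.} The delicate step is Step 2. I have to justify that generic consistency of $H$, meaning expected dimension for generic $u$, is governed by a transversality rank condition, even though $H$ has the nonstandard parametrization $v=u^G$ and lives in $\mathbb P^{\ell-1}$ minus a hyperplane arrangement. The first approach I would try avoids this: by \Cref{thm:galeDualityVerticalSystems}(i), $\dim V_D(H_u)=\dim V_{\Tn}(F_u)$ for every $u\in(\mathbb C^*)^m$. Generic consistency of $H$ is then literally the statement that this common dimension equals $n-s$ on a dense open set, so $H$ is generically consistent if and only if $F$ is. The remaining details are two checks. First, I will reduce to the case where $\overline B$ has rank $n+1$ by the translation and projection of \Cref{thm: connectedComponentsInvariant}. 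Second, I will confirm that the expected dimension of $H$ is $n-s$, so that the notions of generic consistency for $F$ and $H$ coincide.
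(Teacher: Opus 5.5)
Your proposal is correct, and the route you say you would actually take is exactly the paper's proof: \Cref{thm:galeDualityVerticalSystems}(i) gives $\dim V_D(H_u)=\dim V_{\Tn}(F_u)$ for all $u\in(\mathbb{C}^*)^m$, so $H$ is generically consistent if and only if $F$ is (both expected dimensions equal $\ell-1-(m-n-1)=n-s$), and one then applies \Cref{thm:mainTheorem} and \Cref{thm: GspanningFlag}. The transversality argument of Step~2 is a valid but unnecessary alternative that re-derives the rank criterion of \Cref{thm:genericConsistencyOSkarElisendaBeatriz} for $H$ directly, and no reduction via \Cref{thm: connectedComponentsInvariant} is needed, since $\operatorname{rank}\overline{B}=n+1$ is already a standing assumption in the definition of $H$.
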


\begin{proof}
 This follows by combining \Cref{thm:galeDualityVerticalSystems}, \Cref{thm:mainTheorem} and \Cref{thm: GspanningFlag}.
 \end{proof}

 Under some conditions, the notion of $G$-spanning coincides with the non-splitting flags arising in \cite{cattani2022nonsplittingflagsiteratedcircuits, CURRAN2007115}. For the following definition, we consider the linear spaces
 $$L_{E}^* \coloneqq \spann_{\RR}\langle g_j \: : j \in E \rangle \subset \RR^{m - n - 1},$$
 where $E \subset [m]$ is a flat of $M(D)$.

 \begin{definition}
 \label{def: nonsplitting}
Let $F = C(u \star x^B)$ be a vertical system and let $H = (D(y) \star \frac{1}{u})^{G} - \mathbf{1}$ be its Gale dual system. We say that a flag of flats in $M(D)$
$$\emptyset \subset E_1 \subset \dots \subset E_{k}$$
is \emph{non-splitting} if
$s_i \notin L_{E_{i-1}}^*$ for all $i \in \{1,\dots,k-1\}$, where $s_i$ is defined in \eqref{eq: si}.
 \end{definition}

 \begin{corollary}
 \label[corollary]{cor: nonsplitGspan}
Let $F = C(u \star x^B)$ and let $H = (D(y) \star \frac{1}{u})^{G} - \mathbf{1}$ be its Gale dual. Assume that $s\leq n$. Let $0 \subset E_1 \subset \dots \subset E_\ell = [m]$ be a complete flag of flats in $M(D)$ with $r_D(E_i) = i$ with $i = 1,\dots,\ell - 1$.
\begin{enumerate}[label=(\roman*)]
    \item If the flag is non-splitting, then it is $G$-spanning.
    \item If the flag is $G$-spanning and $
    \dim L_{E_i}^* = i$ for all $i \in \{1,\dots,\ell - 1\}$, then it is non-splitting.
\end{enumerate}
\end{corollary}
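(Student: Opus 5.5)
The plan is to compare, for each $i$, the span of the partial sums with the span of the corresponding vectors. Write $S_i \coloneqq \spann_{\R}\langle s_1,\dots,s_i\rangle$, with $S_0 = 0$. Since $s_j = \sum_{t\in E_j\setminus E_{j-1}} g_t$ and $E_j \subset E_i$ for $j\le i$, each $s_j$ with $j\le i$ is a combination of the $g_t$ with $t\in E_i$. This gives the basic inclusion
\[
S_i \subseteq L^*_{E_i}, \qquad i = 0,\dots,\ell,
\]
with $L^*_{\emptyset}=0$.

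I will also use the dimension of the ambient span. We assumed $\overline B$ has rank $n+1$, so $G$ has rank $m-n-1$ and $\spann_{\R}\langle g_j : j\in[m]\rangle = \R^{m-n-1}$. The hypothesis $s\le n$ gives $\ell - 1 = m-s-1 \ge m-n-1$. The whole argument is a dimension count between $\ell-1$ and $m-n-1$; I expect no step to be a serious obstacle, and the only care needed is in using $s\le n$ in the right direction.

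For (i), assume the flag is non-splitting. For $i\le \ell-1$ we have $s_i\notin L^*_{E_{i-1}}$, and $L^*_{E_{i-1}} \supseteq S_{i-1}$, so $s_i\notin S_{i-1}$. Hence $s_1,\dots,s_{\ell-1}$ are linearly independent vectors of $\R^{m-n-1}$, which forces $\ell-1\le m-n-1$. Combined with $\ell-1\ge m-n-1$ from $s\le n$, this gives equality. So $\ell-1$ independent vectors span the $(m-n-1)$-dimensional space $\spann\langle g_j\rangle$, which is exactly condition \eqref{eq: Gspanning}.

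For (ii), assume the flag is $G$-spanning, so $S_{\ell-1}=\R^{m-n-1}$. Then $\ell-1 \ge \dim S_{\ell-1} = m-n-1$. On the other hand, the inclusion $S_{\ell-1}\subseteq L^*_{E_{\ell-1}}$ together with $\dim L^*_{E_{\ell-1}} = \ell-1$ gives $m-n-1\le \ell-1$ directly, and the hypothesis $s\le n$ makes the two counts agree. So $\ell-1$ vectors span a space of dimension $\ell-1$, and therefore $s_1,\dots,s_{\ell-1}$ are linearly independent.

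Consequently $\dim S_i = i$ for every $i\le \ell-1$. Since $S_i\subseteq L^*_{E_i}$ and $\dim L^*_{E_i} = i$, we get $S_i = L^*_{E_i}$. Finally, for $1\le i\le \ell-1$, independence gives $s_i\notin S_{i-1} = L^*_{E_{i-1}}$, which is the non-splitting condition of \Cref{def: nonsplitting}.
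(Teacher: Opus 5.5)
Your part (i) is correct and follows the paper: $S_{i-1}\subseteq L^*_{E_{i-1}}$ turns the non-splitting condition into linear independence of $s_1,\dots,s_{\ell-1}$. The count $m-n-1\le \ell-1$ coming from $s\le n$ then forces these vectors to span. The last step of (ii) is also the paper's argument: from independence you get $S_i=L^*_{E_i}$, and hence $s_i\notin L^*_{E_{i-1}}$.

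The gap is in the middle of (ii), where you conclude that $s_1,\dots,s_{\ell-1}$ are independent. Every inequality you write there points the same way:
\begin{itemize}
\item $\dim S_{\ell-1}\le \ell-1$ gives $m-n-1\le \ell-1$;
\item the inclusion $S_{\ell-1}\subseteq L^*_{E_{\ell-1}}$ gives $m-n-1\le \ell-1$ again;
\item $s\le n$ is equivalent to $m-n-1\le m-s-1=\ell-1$, the same inequality a third time.
\end{itemize}
So nothing you wrote ``makes the two counts agree.'' The claim that $\ell-1$ vectors span an $(\ell-1)$-dimensional space is therefore unjustified; for $s<n$ the inequality from $s\le n$ is even strict.

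What is missing is the reverse inclusion. Since $L^*_{E_{\ell-1}}$ is spanned by some of the $g_j$, the $G$-spanning hypothesis gives
\[
L^*_{E_{\ell-1}}\subseteq \spann_{\R}\langle g_j : j\in[m]\rangle = S_{\ell-1},
\]
so $\ell-1=\dim L^*_{E_{\ell-1}}\le \dim S_{\ell-1}\le \ell-1$, and $s_1,\dots,s_{\ell-1}$ are independent. This is exactly how the paper argues. Note that this step does not use $s\le n$ at all. In fact, for $s<n$ the hypotheses of (ii) cannot be met, since $\dim L^*_{E_{\ell-1}}\le m-n-1<\ell-1$. Combined with your inclusion $S_{\ell-1}\subseteq L^*_{E_{\ell-1}}$, this gives $S_{\ell-1}=L^*_{E_{\ell-1}}$, and the rest of your argument then goes through.
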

\begin{proof}
Since $s\leq n$, we have
that the rank of $G$ equals $m-n-1\leq m-s-1=\ell-1$. Suppose first that the flag is non-splitting. Since
$$
\spann_{\R}\langle s_1,\dots,s_{i-1}\rangle
\subseteq L_{E_{i-1}}^*,
$$
the condition $s_i\notin L_{E_{i-1}}^*$ for
$1\leq i\leq\ell-1$ implies that
$s_1,\dots,s_{\ell-1}$ are linearly independent.
Together with the rank bound and the fact that $\sum_{i = 1}^{\ell} s_i = 0$, this gives
$$
\spann_{\RR}\langle s_1,\dots,s_{\ell}\rangle
=
\spann_{\RR}\langle g_j:j\in[m]\rangle.
$$
Thus the flag is $G$-spanning.

Conversely, suppose that the flag is $G$-spanning and
$\dim_{\RR}L_{E_i}^*=i$ for $1\leq i\leq\ell-1$.
Then
$$
L_{E_{\ell-1}}^*
\subseteq \spann_{\RR}\langle g_j:j\in[m]\rangle
=
\spann_{\RR}\langle s_1,\dots,s_{\ell-1}\rangle.
$$
Since the left-hand space has dimension $\ell-1$,
the vectors $s_1,\dots,s_{\ell-1}$ are linearly independent.
For each $1\leq i\leq\ell-1$, we therefore have
$$
\spann_{\RR}\langle s_1,\dots,s_i\rangle=L_{E_i}^*,
$$
because the left-hand space is contained in the right-hand
space and both have dimension $i$. Consequently,
$$
s_i\notin
\spann_{\RR}\langle s_1,\dots,s_{i-1}\rangle
=L_{E_{i-1}}^*,
\qquad 1\leq i\leq\ell-1.
$$
Hence the flag is non-splitting.
\end{proof}

The hypothesis $s\leq n$ excludes the case where $M(C)$ coincides with the matroid of the columns of $\overline{B}$, which we will consider in the next section. Indeed, $\spann_{\RR}\langle g_j:\:j\in[m]\rangle$ has dimension $m-n-1=\ell+(s-n-1)$, while a flag of length at most $\ell$ gives at most $\ell-1$ vectors $s_i$ in \eqref{eq: Gspanning}. However, in that case $\dim L_{E_i}^*=r_D(E_i)=i$, and the arguments in the proof above show that a complete flag is non-splitting if and only if $s_1,\dots,s_{\ell-1}$ are linearly independent. Using this, we recover the results in \cite{CURRAN2007115, cattani2022nonsplittingflagsiteratedcircuits} in \Cref{section:4}.

\begin{example}
\label[example]{ex: GaleDual}
The Gale dual of the vertical system in \Cref{example: pyramid} is $H = (D(y) \star \frac{1}{u})^G - \mathbf{1}$ with
	$$D^{\top} = \begin{pmatrix}
		-1 & -1 & 0 & 1 & 0 \\
		0 & -1 & -1 & 0 & 1 \\
	\end{pmatrix}, \quad G^{\top} = \begin{pmatrix}
        -1 & 0 & 1 & -1 & 1\\
	\end{pmatrix}$$
	which corresponds to the polynomial system:
    $$H = \frac{u_1u_4}{u_3u_5}\,y^2 - 1, \qquad y = y_2/y_1.$$
    In particular, the chain of flats $0 \subset E_1 \subset E_2$ with $E_1 = \{3,5\}$ satisfies that
    $$\sum_{j \in E_1}g_j = 1 + 1 \neq 0,$$
    implying that the flag is $G$-spanning and also non-splitting.
\end{example}

\subsection{Root counts and generic consistency}

Finally, we derive a consequence of the previous results for square systems. Namely, if a square system is not generically consistent, then the generic number of zeros in $\Tn$ must be zero. As these root counts have been studied in \cite{checa2026toricvectorbundlesnumber, kaveh2025vectorvaluedlaurentpolynomialequations,genericFlatness}, we may use \Cref{thm:mainTheorem} to state equivalent conditions for consistency of square systems.

 Let $F=C(u\star x^B)$ be a vertical system such that $\rk C=n$. Consider the following set of polytopes:
\begin{equation}\label{eq:Np}\Delta^{(i)} \coloneqq \conv \Big\{\sum_{k \in \mathcal{I}}b_{k} \in \mathbb{Z}^n : \:  \mathcal{I} \subset [m], \: |\mathcal{I}| = i, \: (c_{k})_{k \in \mathcal{I}} \text{ are linearly independent}\Big\} \subset \R^n
	\end{equation}
where $c_j$ is the $j$-th column of $C \in \CC^{n \times m}$. In particular, $\Delta^{(1)} = \Delta$.
In this case of square systems (when $s = n$), recent work of Kaveh, Khovanskii and Spink \cite{kaveh2025vectorvaluedlaurentpolynomialequations} has studied the generic number of roots over $\Tn$ by considering the mixed volume function of the virtual polytopes $\Delta^{(i + 1)} - \Delta^{(i)}$, where the virtual mixed volume is understood by linear extension of the mixed volume function. On the other hand, we let $X_B \subseteq \mathbb{P}^{m-1}$ be the toric variety defined by the toric ideal
\[
I_{\overline{B}} \coloneqq \big\langle\, y^a - y^c \: : \: a,c \in \ZZ_{\geq 0}^{m}, \: \overline{B}a = \overline{B}c \,\big\rangle \subset \mathbb{C}[y_1,\dots,y_m],
\]
which equals the closure of the image of
\begin{equation}
\label{eq: toricVariety}
    (\mathbb{C}^*)^n \xrightarrow[]{} \mathbb{P}^{m-1}, \quad t \xrightarrow[]{} [t^{b_1}:\dots:t^{b_m}].
\end{equation}
Let $Y_{C}$ be the variety defined by $\langle Cy  \rangle \subset \mathbb{C}[y_1,\dots,y_m]$.

\begin{corollary}
	\label[corollary]{thm: equivalentConditions}
	Let $F = C(u \star x^B)$ be a vertical system with $\rk(C) = n$. The following are equivalent:
	\begin{enumerate}[label=(\roman*)]
		\item The tropicalizations of $X_B$ and $Y_{C}$ have a nonempty (stable) intersection.
		\item There exists an iterated circuit $(\mathcal{C}_1,\dots,\mathcal{C}_{k})$ of $M(C)$ such that $$\dim \big( \sum_{i = 1}^{k} \Delta_{\mathcal{C}_i}\big)=n.$$
		\item The mixed volume $\MV(\Delta^{(1)},\Delta^{(2)} - \Delta^{(1)},\dots,\Delta^{(n)} - \Delta^{ (n-1)})$ is positive, where each entry is considered as a multilinear function.
	\end{enumerate}
\end{corollary}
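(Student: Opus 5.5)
The plan is to route every condition through (ii) and the main characterization, using generic consistency of the square system $F$ as the pivot. By \Cref{thm:mainTheorem}, with $s=n$, condition (ii) is equivalent to $F$ being generically consistent. For a square system, \Cref{thm:genericConsistencyOSkarElisendaBeatriz} and the nondegeneracy statement after it show that generic consistency holds if and only if, for generic $u$, $V_{\Tn}(F_u)$ is nonempty and finite with nondegenerate points. If $F$ is not generically consistent, then for generic $u$ either $V_{\Tn}(F_u)$ is empty or all its points are degenerate; since the generic fibre of the incidence variety over a dense set of parameters has the expected dimension $0$, the nonempty case cannot occur generically. So the generic root count is positive exactly when (ii) holds. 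It therefore suffices to show that (i) and (iii) are each equivalent to positivity of the generic root count of $F$ in $\Tn$.

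For (iii), I will invoke the root count of Kaveh--Khovanskii--Spink \cite{kaveh2025vectorvaluedlaurentpolynomialequations}. It states that for generic $u$ the number of isolated solutions of $F_u$ in $\Tn$ equals $\MV(\Delta^{(1)},\Delta^{(2)}-\Delta^{(1)},\dots,\Delta^{(n)}-\Delta^{(n-1)})$. Positivity of this number is then precisely positivity of the generic root count, giving (iii)$\iff$(ii). I need to check two things. First, their hypotheses (the parametrization $u\star x^B$ with a fixed coefficient matrix $C$ of rank $n$) match ours. Second, ``generic'' in their count can be taken inside the Zariski open set of \Cref{thm:genericConsistencyOSkarElisendaBeatriz}.

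For (i), I will use the description of the solutions via the incidence correspondence $x\mapsto u\star x^B$. For $u\in(\mathbb{C}^*)^m$, the solutions of $F_u$ in $\Tn$ correspond to the points of $(u\star X_B^\circ)\cap Y_C$, where $X_B^\circ$ is the torus orbit. The generic translate $u\star X_B$ meets the linear space $Y_C$ properly: the dimensions are $n$ and $m-1-n$ inside $\mathbb{P}^{m-1}$, so the expected dimension of the intersection is $-1$ projectively, i.e.\ finitely many points in the affine cone, and we work with the projective setup that appears in \cite{genericFlatness}. By the tropical stable-intersection theorem for a generic torus translate (Osserman--Payne / the fan displacement rule), the number of intersection points in the torus equals the degree of the stable intersection $\Trop(X_B)\cdot\Trop(Y_C)$. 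Hence the stable intersection is nonempty if and only if the generic root count is positive, which gives (i)$\iff$(ii). I will cite \cite{genericFlatness,checa2026toricvectorbundlesnumber}, where this tropical root count for vertical systems is stated.

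The main obstacle is the first step, specifically the claim that lack of generic consistency forces the generic root count to be zero rather than merely degenerate. I will argue by dimension. Let $\mathcal{W}$ be the incidence variety in $\Tn\times\mathbb{C}^m$; each fibre over $x$ is a linear space of dimension $m-n$, so $\dim\mathcal{W}=m$. If the projection of $\mathcal{W}$ to parameters were dominant, then generic fibres would be finite, and by generic smoothness they would consist of reduced, hence nondegenerate, points. This would contradict the degeneracy statement from \cite{FELIU2025630}. Therefore the projection is not dominant, and generic $u$ has no solutions.
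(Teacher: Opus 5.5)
Your proposal is correct and follows essentially the paper's route. The paper also pivots through generic consistency: it uses \Cref{thm:mainTheorem} for (ii), cites Kaveh--Khovanskii--Spink for (iii), and cites a tropical root-count theorem of Feliu et al.\ for (i), where you instead invoke the generic-translate stable intersection theorem directly; your non-dominance argument only makes explicit the step, left implicit in the paper, that a square system which is not generically consistent has generic root count zero. One small slip: the dimension count in $\mathbb{P}^{m-1}$ is $n+(m-1-n)-(m-1)=0$, not $-1$, and it is this value $0$ that yields finitely many intersection points for generic $u$.
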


\begin{proof}
	By \cite[Theorem 5.1]{kaveh2025vectorvaluedlaurentpolynomialequations}, \cite[Theorem 3.21]{feliu2026rootboundsverticalsystems} and \Cref{thm:mainTheorem}, the three conditions are equivalent to the system being generically consistent over $\Tn$.
\end{proof}

\begin{example}
	Consider the system $F = C(u \star x^B)$ with matrices
	$$C = \begin{pmatrix}
		1 & 1 & 1 \\
		0 & 1 & 1
	\end{pmatrix}, \quad B = \begin{pmatrix}
		0 & 1 & 0 \\
		0 & 0 & 1
	\end{pmatrix}.$$
	Note that the only circuit in $M(C)$ consists of the columns $\{2,3\}$. The Newton polytope $$\Delta = \Delta^{(1)} = \conv \Big\{(0,0),(1,0),(0,1)\Big\} \subset \mathbb{R}^2$$ equals the unit simplex. Moreover, as the second and third columns of $C$ are linearly dependent, $\Delta^{(2)} = \conv\Big\{(1,0),(0,1)\Big\}$. Therefore,
	$$\MV(\Delta^{(1)},\Delta^{(2)} - \Delta^{(1)}) = \MV(\Delta^{(1)},\Delta^{(2)}) - \MV(\Delta^{(1)},\Delta^{(1)}) = 1 - 1 = 0.$$
	Equivalently, it can be checked that the system has no iterated circuits $(\mathcal{C}_1, \mathcal{C}_2)$ such that $\dim  \big(\Delta_{\mC_1} +  \Delta_{\mC_2}\big)=2.$ Note that as the first column of $C$ is not contained in any circuit, we have $\Trop(Y_C) = \varnothing$.
\end{example}

\section{The resultant of a vertical system}
\label{section:4}

In the previous section, we saw that the iterated circuits of $M(C)$ can be used to study the properties of the vertically parametrized polynomial system $F = C(u \star x^B)$. If the system is generically consistent, then the projection of $\mathcal{W}_F$ to the parameter space is dense. In this section we provide a more general formula for the codimension of  this projection which, in particular, applies to the overdetermined systems. When this projection is defined by a single polynomial, we define the resultant of the vertical system and study some of its properties, in analogy with sparse resultants \cite{cannyemiris93, dandrea2020cannyemiris, sturmfels94}.

\subsection{The codimension of the projection to the parameter space}
Let $\mathcal{B}_1,\dots,\mathcal{B}_c\subseteq[m]$ be the connected
components of $M(C)$. By \Cref{thm: connectedComponentsInvariant}, we
consider the parameter space $\mathbb{P}^{\mathcal{B}}
\coloneqq
\mathbb{P}^{\mathcal{B}_1}\times\cdots\times
\mathbb{P}^{\mathcal{B}_c}$  and the incidence variety
\[
\mathcal{W}_F
\coloneqq
\{(x,u)\in\mathbb{T}^n\times\mathbb{P}^{\mathcal{B}}:
C(u\star x^B)=0\}.
\]
We denote by $\pi\colon\mathcal{W}_F\longrightarrow\mathbb{P}^{\mathcal{B}}$ the projection to the second factor. Note that $\mathcal{W}_F$ is the total space of the projectivization in $\PP^{\mathcal{B}}$ of the vector bundle given by the right  kernels of $C \diag(x^B)$ for $x \in \Tn$, as
$$C(u \star x^B) = C\diag(u)x^B = C\diag(x^B)u.$$
In particular, $\mathcal W_F$ is irreducible and
\[
\dim\mathcal W_F=n+m-s-c.
\]
The projection $\overline{\pi(\mathcal{W}_F)}$ is also irreducible. The proof of \Cref{thm:mainCodimension} will make use of the following two lemmas.

\begin{lemma}
\label{lemma: mC}
Let $\mathcal C=(\mathcal C_1,\dots,\mathcal C_k)$ be an iterated
circuit of $M(C)$. There exists $I\subseteq \bigcup_{i=1}^k\mathcal C_i$ such that, after omitting the empty
sets, the sequence
$(\mathcal C_1\setminus I,\dots,\mathcal C_k\setminus I)$
is an iterated circuit of $M(I)$ satisfying
\begin{equation}
\label{eq: rIdimDelta}
r_{I}\Big(\bigcup_{i=1}^k\mathcal C_i\setminus I\Big)
=
\dim\left(\sum_{\mathcal C_i\setminus I\neq\varnothing}
\Delta_{\mathcal C_i\setminus I}\right).
\end{equation}
\end{lemma}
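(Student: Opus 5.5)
The plan is to induct on the \emph{defect}
\[
\delta(\mathcal C)\coloneqq r_C\Big(\bigcup_{i=1}^k\mathcal C_i\Big)-\dim\Big(\sum_{i=1}^k\Delta_{\mathcal C_i}\Big)\geq 0,
\]
and to remove one element of $\bigcup_i\mathcal C_i$ at a time, adding it to $I$. If $\delta=0$ we take $I=\varnothing$.

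For the dimension count, fix a base point $j_i\in\mathcal C_i$ for each $i$. Then $\dim\big(\sum_i\Delta_{\mathcal C_i}\big)=\dim\sum_i L_{\mathcal C_i}$, and this sum is spanned by the vectors $b_j-b_{j_i}$ for $j\in\mathcal C_i\setminus\{j_i\}$. By the definition of iterated circuits there are exactly $\sum_i(|\mathcal C_i|-1)=r_C(\bigcup_i\mathcal C_i)$ such vectors. So if $\delta>0$ they are linearly dependent, and we can pick $j\in\mathcal C_i\setminus\{j_i\}$, for some $i$, such that $b_j-b_{j_i}$ lies in the span of the others. Removing $j$ leaves $\sum_{i'}L_{\mathcal C_{i'}\setminus\{j\}}$ unchanged, because the remaining difference vectors still span the same space.

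The main step is to check that contracting $j$ preserves the iterated-circuit structure. Set $X_{i-1}=\mathcal C_1\cup\dots\cup\mathcal C_{i-1}$.
\begin{itemize}
\item First, $j\notin\cl(X_{i-1})$. Otherwise $j$ would be a loop of $M(X_{i-1})$, so it could not lie in the circuit $\mathcal C_i$, which has size $\geq 2$.
\item For $i'<i$, we have $X_{i'}\subseteq X_{i-1}$, so $j\notin\cl(X_{i'})$. Hence contracting $j$ does not change the rank function on subsets of $X_{i'}$ relative to $X_{i'-1}$, and $\mathcal C_{i'}$ is still a circuit of $M(X_{i'-1}\cup\{j\})$.
\item Since $j$ is not a loop of $M(X_{i-1})$, the set $\mathcal C_i\setminus\{j\}$ is a circuit of $M(X_{i-1}\cup\{j\})$, by the standard fact that $C-e$ is a circuit of $M/e$.
\item For $i'>i$ we have $j\in X_{i'-1}$, so these contractions are literally unchanged.
\end{itemize}
Together with the dimension observation above, the rank $r_{\{j\}}$ of the union drops by exactly one while the Minkowski dimension stays the same. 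Thus $\delta$ decreases by one.

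The one degenerate case, which I expect to be the delicate point, is $|\mathcal C_i|=2$. Then $b_j-b_{j_i}$ lies in the span of the others only if it is zero, i.e.\ $b_j=b_{j_i}$. After removing $j$, the set $\{j_i\}$ is a singleton, so it is not allowed in an iterated circuit; indeed it is a loop in the contraction. In this case we also add $j_i$ to $I$. Contracting a loop changes no ranks, and $\Delta_{\{j_i\}}$ is a point, so both sides of \eqref{eq: rIdimDelta} are unaffected. This is why the statement says ``after omitting the empty sets''.

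Iterating until $\delta=0$ produces the required $I\subseteq\bigcup_i\mathcal C_i$. The bookkeeping is that successive contractions compose, $M(I)$ contracted further by $j'$ equals $M(I\cup\{j'\})$, so the argument applies verbatim at each stage.
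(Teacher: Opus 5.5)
\textbf{Gap in the case $|\mathcal C_i|=2$.} Your step for $|\mathcal C_i|\geq 3$ is correct. The four bullet points show that contracting one element $j\in\mathcal C_i\setminus\{j_i\}$ preserves the iterated-circuit structure, lowers the left side of \eqref{eq: rIdimDelta} by one, and keeps the Minkowski dimension. The gap is the case $|\mathcal C_i|=2$. There you also put $j_i$ into $I$, on the grounds that it is a loop, so contracting it ``changes no ranks''. But $j_i$ is a loop only in the contraction by $I$ together with $\mathcal C_1\cup\dots\cup\mathcal C_{i-1}$, that is, from stage $i$ on. The earlier sets $\mathcal C_{i'}$ with $i'<i$ must be circuits of the contraction by $I\cup X_{i'-1}$, where $j_i$ is in general not a loop. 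Your second bullet cannot be repeated for $j_i$: it would need $j_i\notin\cl(X_{i-1}\cup\{j\})$, whereas in fact $j_i\in\cl(X_{i-1}\cup\{j\})$.

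\textbf{Counterexample.} Take $c_1=e_1$, $c_2=e_2$, $c_3=e_1+e_2$, $c_4=e_3$, $c_5=e_1+e_3$ in $\mathbb C^3$, and $b_1=(0,0)$, $b_2=(1,0)$, $b_3=(0,1)$, $b_4=b_5$. Then $(\{1,2,3\},\{4,5\})$ is an iterated circuit whose union has rank $3$ and whose Minkowski sum has dimension $2$. The only difference vector lying in the span of the others is $b_5-b_4=0$, so your procedure is forced to take $I=\{4,5\}$. Contracting $c_4,c_5$ kills $e_1$, so $1$ becomes a loop and $\{1,2,3\}$ is not a circuit of $M(I)$. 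Moreover $r_I(\{1,2,3\})=3-2=1\neq 2=\dim\Delta_{\{1,2,3\}}$, so \eqref{eq: rIdimDelta} fails as well.

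\textbf{Fix.} Never put the base points into $I$. Leave $j_i$ outside $I$ and simply drop the singleton $\{j_i\}$ from the sequence. It lies in the closure of $I$ together with the preceding sets, and that union only grows in later steps, since removed elements move into $I$. So omitting $\{j_i\}$ changes none of the later contractions and does not change the rank of the union. Being a point, it does not change the Minkowski dimension either.

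With this fix your argument is the paper's construction carried out one element at a time. The paper chooses $K_i\subseteq\mathcal C_i\setminus\{j_i\}$ so that the vectors $b_a-b_{j_i}$, $a\in K_i$, form a basis of the direction space. It then sets $I=\bigcup_i\bigl(\mathcal C_i\setminus(K_i\cup\{j_i\})\bigr)$ and keeps only the sets $K_i\cup\{j_i\}$ with $K_i\neq\varnothing$. Your bullet points give a useful justification of the paper's one-line claim that these sets form an iterated circuit of $M(I)$.

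\textbf{How to read the statement.} In the example above, any $I$ that empties $\mathcal C_2$ makes $1$ a loop of $M(I)$, hence must contain $1$, and so leaves Minkowski dimension at most $1$. So ``omitting the empty sets'' in the statement has to be read as omitting the sets that have become singletons, which is exactly how the paper's proof treats them.
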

\begin{proof}
We choose $j_i\in\mathcal C_i$ and subsets $K_i\subseteq\mathcal C_i\setminus\{j_i\}$ for each $i = 1,\dots,k$ such that the vectors in $\bigcup_{i = 1}^k\bigcup_{a \in K_i}\{b_a - b_{j_i}\}$ form a basis of the direction space of $\sum_i\Delta_{\mathcal C_i}$, and set $I=\bigcup_i(\mathcal C_i\setminus(K_i\cup\{j_i\}))$.
Since $\bigcup_i(\mathcal C_i\setminus\{j_i\})$ is independent in $M(C)$, the sets $K_i\cup\{j_i\}$ with $K_i\neq\varnothing$ (in their original order) form an iterated circuit of $M(I)$ satisfying \eqref{eq: rIdimDelta}.
\end{proof}

\begin{lemma}
\label{lemm: sameCircuits}
Let $C \in \mathbb{C}^{s \times m}$ be a matrix and let $M(C)$ be the corresponding matroid. Assume M(C) has no coloops. Given two elements $i,j \in [m]$, the following are equivalent:
    \begin{enumerate}[label=(\roman*)]
        \item For any circuit $\mathcal{C}$ of $M(C)$, $i \in \mathcal{C}$ if and only if $j \in \mathcal{C}$.
        \item There exists $\lambda \in \mathbb{C}^*$ such that $w_j = \lambda w_i$ for all $w \in \ker(C)$.
    \end{enumerate}
\end{lemma}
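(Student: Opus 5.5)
The plan is to translate both conditions into statements about the coordinate functionals $L_i, L_j$ on $V \coloneqq \ker(C)$, where $L_i(w) = w_i$. The key fact is the standard description of circuits via the kernel. The circuits of $M(C)$ are exactly the supports of the circuit vectors of $V$, that is, the nonzero vectors of $\ker(C)$ with minimal support. Moreover, $\ker(C)$ is spanned by its circuit vectors. The no-coloop assumption guarantees that every element $i \in [m]$ lies in some circuit. Equivalently, $L_i$ is not identically zero on $V$, since a coloop is exactly an index with $w_i = 0$ for all $w \in \ker(C)$.

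\emph{(ii)$\Rightarrow$(i).} Suppose $w_j = \lambda w_i$ on $V$ with $\lambda \neq 0$. Let $\mathcal{C}$ be a circuit, and let $c$ be a circuit vector with $\supp(c) = \mathcal{C}$. Then $c_i \neq 0$ if and only if $c_j = \lambda c_i \neq 0$, so $i \in \mathcal{C}$ if and only if $j \in \mathcal{C}$.

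\emph{(i)$\Rightarrow$(ii).} This direction is the main step. Assume (i). Since every element is in some circuit, $L_i$ and $L_j$ are both nonzero on $V$. It suffices to show $\ker L_i|_V = \ker L_j|_V$: two nonzero functionals with the same kernel are proportional, and the proportionality constant is nonzero because $L_j \neq 0$.

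Let $T = \ker L_i|_V$. I will show that $T$ is spanned by circuit vectors of $V$ whose support avoids $i$. To do this, I apply the matroid fact to $V' \coloneqq T$, viewed as the kernel of $C$ restricted to the columns $[m]\setminus\{i\}$, which is the deletion $M(C)\setminus i$. Its circuits are exactly the circuits of $M(C)$ not containing $i$, and their vectors are circuit vectors of $V$ lying in $T$. Since a kernel is spanned by its circuit vectors, $T$ is spanned by circuit vectors $c$ of $V$ with $i \notin \supp(c)$. By (i), $j \notin \supp(c)$ for each of them, so $c_j = 0$. Hence $T \subseteq \ker L_j|_V$.

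By symmetry of (i), $\ker L_j|_V \subseteq T$, and the kernels coincide. Alternatively, both kernels are hyperplanes in $V$ (each functional is nonzero), so the single inclusion already gives equality.

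The only delicate point is the spanning claim: every vector in a linear space is a sum of elementary (minimal-support) vectors. I will prove it briefly by induction on the support size. Given $w \in V$ that is not elementary, pick an elementary $c$ with $\supp(c) \subsetneq \supp(w)$. Subtract $\mu c$ from $w$, choosing $\mu$ so that one coordinate of $\supp(c)$ is killed. The result has strictly smaller support, and the induction applies.

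Because this reduction works inside $T$ itself, the spanning vectors it produces automatically have supports avoiding $i$. That is exactly what the argument above needs.
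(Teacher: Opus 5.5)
Your proof is correct and follows essentially the paper's route: both reduce (i)$\Rightarrow$(ii) to showing that the nonzero coordinate functionals $L_i, L_j$ on $\ker(C)$ have the same kernel, and both rest on the same support-reduction trick $w-\mu w'$. The difference is only in packaging. The paper argues by contradiction: it takes a minimal-support $w$ with $w_j=0\neq w_i$ and shows that $\operatorname{supp}(w)$ is a circuit containing $i$ but not $j$. You instead prove $\ker L_i\subseteq\ker L_j$ directly, by spanning $\ker L_i$ with circuit vectors of the deletion $M(C)\setminus i$.
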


\begin{proof}
Assume (i) holds and consider the linear forms $L_i: \ker(C) \mapsto \CC$ to be the projection to the $i$-th coordinate which are nonzero since there are no coloops. If the operators $L_i$ and $L_j$ are not proportional, their kernels would be two different hyperplanes of $\ker(C)$, so some $w \in \ker(C)$ would have $w_j = 0 \neq w_i$. Choose such a $w$ with minimal support. Then, $w$ is a circuit vector: if $w'\in\ker(C)\setminus\{0\}$ has $\supp(w')\subsetneq\supp(w)$, then either $w'_i\neq0$ and $w'$ contradicts the minimality, or $w'_i=0$ and $w-\mu w'$ does, for $\mu$ chosen to cancel one coordinate of $\supp(w')$. Therefore, $\supp(w)$ would be a circuit containing $i$ but not $j$, getting a contradiction.

Conversely, if there exists $\lambda \in \mathbb{C}^*$ such that $w_j = \lambda w_i$ then every vector of $\ker(C)$ has either both $i,j$ or neither of them in its support.
\end{proof}

To a vertical system $F = C(u \star x^B)$, we attach the integer
\begin{equation}
\label{eq: iota}
    \iota(F)\coloneqq
    \max_{(\mathcal C_1,\dots,\mathcal C_k)}
    \dim\left(\sum_{j=1}^k\Delta_{\mathcal C_j}\right),
\end{equation}
where the maximum is taken over all iterated circuits of $M(C)$.

 \begin{theorem}
	\label[theorem]{thm:mainCodimension}
	Let $F = C(u \star x^B)$ be a vertically parametrized polynomial system with $\rk(C) = s$. If $\mathcal{W}_F \neq \varnothing$,  then
	\begin{equation}
		\label{codimension maximum}
		\codim(\overline{\pi(\mathcal{W}_F)}) = s - \iota(F).
	\end{equation}

\end{theorem}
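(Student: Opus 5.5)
We compute the dimension of $\overline{\pi(\mathcal W_F)}$ through the generic fibre of $\pi$. Since $\dim \mathcal W_F = n+m-s-c$ and $\dim \mathbb{P}^{\mathcal B} = m-c$, the claim $\codim = s-\iota(F)$ is equivalent to the generic nonempty fibre of $\pi$ having dimension $n-\iota(F)$. So we aim to show two things. First, at a generic point $(x,u)\in\mathcal W_F$ the Jacobian $C\diag(u\star x^B)B^{\top}$ (equivalently $C\diag(w)B^{\top}$ with $w=u\star x^B$ a generic element of $\ker C$, after rescaling $x$ to $\mathbf 1$ by the torus action) has rank exactly $\iota(F)$. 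Second, the fibre dimension is governed by this rank.

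For the second point we use the differential of $\pi$. The tangent space of $\mathcal W_F$ at $(x,u)$ consists of pairs $(\dot x,\dot u)$ with $C\diag(x^B)\dot u + C\diag(w)B^{\top}\dot x' = 0$, where $\dot x'$ is the logarithmic tangent vector. Hence the kernel of $d\pi$ is $\{\dot x': C\diag(w)B^{\top}\dot x'=0\}$, up to the scaling directions already quotiented. This kernel has dimension $n-\rank(C\diag(w)B^{\top})$. By generic smoothness in characteristic zero, the generic fibre dimension equals $n$ minus the generic rank $\rho\coloneqq \max_{w\in\ker C}\rank C\diag(w)B^{\top}$. Here we use that $\mathcal W_F$ is irreducible and that, via $x\mapsto u\star x^B$, generic points of $\mathcal W_F$ map to generic points of $\ker C$ (as a cone bundle). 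Thus $\codim\overline{\pi(\mathcal W_F)} = s-\rho$, and we must prove $\rho=\iota(F)$.

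To get $\rho\ge\iota(F)$, take an iterated circuit achieving $\iota(F)$. Apply \Cref{lemma: mC} to pass to $I$ and an iterated circuit $(\mathcal C_i\setminus I)$ of $M(I)$ whose rank equals the dimension of the Minkowski sum. We then mimic the (ii)$\Rightarrow$(i) argument of \Cref{thm:mainTheorem}. Choose $\mathcal D$ as in that proof, now an independent set of size $\iota(F)$ complemented by a basis of $I$. Writing $C$ in echelon form with respect to this basis, the coefficient of the monomial $\prod\lambda_i^{d_i}$ in the appropriate $\iota(F)\times\iota(F)$ minor (rows indexed by $\mathcal D$) is a nonzero product times a nonzero wedge. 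This gives rank at least $\iota(F)$.

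The main obstacle is the reverse inequality $\rho\le\iota(F)$. We plan an induction on $\ell$, generalizing \Cref{prop: contractedCircuit}. Take a nonzero $\rho\times\rho$ minor $P$ of $C\diag(w)B^{\top}$ on $\ker C$. Its degree $\rho$ may be smaller than $m-\ell$, so \Cref{lemma: derivativeCircuit} does not apply directly. To fix this, restrict to the $\rho$ rows involved, pass to the submatrix of $C$ given by those rows, and contract or delete so that the relevant matroid has rank $\rho$. We expect this reduction to require care, since row restriction changes the matroid. An alternative is to choose $\mathcal D$-adapted echelon rows, so that the minor lives on a contraction of rank $\rho$. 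Then \Cref{lemma: derivativeCircuit} produces a circuit $\mathcal C_1$ with $\partial_c^{r}P\neq0$. As in \Cref{prop: contractedCircuit}, this splits the rank as $\dim L_{\mathcal C_1}$ plus the rank of the projected system $F_{\mathcal C_1}$, with $\dim L_{\mathcal C_1}\le r_C(\mathcal C_1)$ and possibly strict. Induction on $F_{\mathcal C_1}$ then yields an iterated circuit of $M(\mathcal C_1)$ whose projected Minkowski sum has dimension at least the remaining rank. Prepending $\mathcal C_1$ and using $\ker\tau_{\mathcal C_1}=L_{\mathcal C_1}$ gives an iterated circuit of $M(C)$ with Minkowski sum dimension at least $\rho$, so $\rho\le\iota(F)$. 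Finally, the hypothesis $\mathcal W_F\neq\varnothing$ is needed only so that the fibre-dimension computation is meaningful.
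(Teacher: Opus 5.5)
Your reduction to the generic rank $\rho$ of $C\diag(w)B^{\top}$ on $\ker(C)$ matches the paper, and so does your lower bound $\rho\ge\iota(F)$ via \Cref{lemma: mC} and the triangular coefficient argument of \Cref{thm:mainTheorem}. One small correction: $\mathcal W_F\neq\varnothing$ is also what excludes coloops, and you need that for $u\star x^B$ to sweep out a dense subset of $\ker(C)$.

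The gap is the reverse inequality $\rho\le\iota(F)$. This is the real content of the theorem, and the proposal does not establish it. You correctly note that a nonzero $\rho\times\rho$ minor $P$ has degree $\rho<s=m-\ell$, so \Cref{lemma: derivativeCircuit} cannot be used. Its conclusion really does fail in low degree: for $\ell=1$ its hypothesis holds for every polynomial of degree $<m-1$.

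Your first fix does not repair this. If $R$ indexes the rows of $P$, then $P$ is a polynomial on $\ker(C_R)\supseteq\ker(C)$. Applying \Cref{lemma: derivativeCircuit} there produces circuits of the quotient $M(C_R)$, which in general are not circuits of $M(C)$. For example, with $C=\begin{pmatrix}1&0&1&1\\0&1&1&2\end{pmatrix}$ and $R=\{1\}$ you get $\{1,3\},\{1,4\},\{3,4\}$, whereas the circuits of $M(C)$ are the $3$-element subsets. So the iterated circuit your induction builds does not enter the maximum defining $\iota(F)$.

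Your alternative, a contraction of rank $\rho$ carrying the minor, is left unspecified. You would need two things you do not supply. First, that some flat $X$ of rank $s-\rho$ has the rows of $C_X$ supporting a nonzero $\rho$-minor. Second, a way to lift iterated circuits of $M(X)$ to iterated circuits of $M(C)$ without losing Minkowski dimension. In particular, the $\mathcal C_1$ you propose to prepend would be a circuit of $M(X)$, not of $M(C)$.

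The paper never uses \Cref{lemma: derivativeCircuit} in this direction; it inducts on $m$ in two cases.
\begin{enumerate}[label=(\roman*)]
\item If some class $\mathcal A$ of elements lies in exactly the same circuits, it merges these columns into one via \Cref{lemm: sameCircuits}, projecting all exponents by $\tau_{\mathcal A}$. This gives $\rank C\diag(w)B^{\top}\le\dim\Delta_{\mathcal A}+\iota(\widehat F)$ and $\iota(\widehat F)\le\iota(F)-\dim\Delta_{\mathcal A}$.
\item Otherwise the coordinate forms $L_j$ on $\ker(C)$ are pairwise non-proportional. Induction applied to the deletions $F_j$, whose kernels are the hyperplanes $T_j$ and which satisfy $\iota(F_j)\le\iota(F)$, shows that every $(\iota(F)+1)$-minor vanishes on each $T_j$. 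Such a minor is then divisible by $m$ pairwise non-proportional linear forms while having degree $\iota(F)+1\le s<m$, so it is zero. Here the small degree is exactly what closes the argument, rather than an obstruction.
\end{enumerate}

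If you want to stay closer to your own route, there is a short repair. For generic $w_0\in\ker(C)$ all coordinates are nonzero, so $C\diag(w_0)$ maps onto $\mathbb{C}^s$. You can therefore append $s-\rho$ integer rows $\beta_k^{\top}$ to $B$ such that the vectors $C\diag(w_0)\beta_k$ are independent modulo the column space of $C\diag(w_0)B^{\top}$. The enlarged system is then generically consistent by \Cref{thm:genericConsistencyOSkarElisendaBeatriz}. So \Cref{thm:mainTheorem} gives an iterated circuit of $M(C)$ whose Minkowski sum has dimension $s$ in $\mathbb{R}^{n+s-\rho}$. Forgetting the $s-\rho$ added coordinates leaves dimension at least $\rho$, hence $\iota(F)\ge\rho$.
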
\begin{proof}
Let $\xi=(x,u)\in\mathcal W_F$ be a generic point and by generic smoothness, the fibre through $\xi$ is smooth of dimension $n-\rank J_xF(\xi)$. Furthermore, $\dim\mathcal W_F=n+m-s-c$ (where $c$ is the number of connected components of $M(C)$) and hence, by the fiber dimension theorem it suffices to show that $\rank J_xF(\xi)=\iota(F)$. Here, $J_xF(\xi)=C\diag(w)B^{\top}\diag(x)^{-1}$ with $w=u\star x^B\in\ker(C)$.

As $\mathcal W_F\neq\varnothing$, $M(C)$ has no coloops: if $i$ were a coloop, then $w_i=0$ for every $w\in\ker(C)$, so any $(x,u)\in\mathcal W_F$ would have $u_ix^{b_i}=0$ and hence $u_i=0$, while $\{i\}$ is a connected component and $\PP^{\mathcal B_i}=\PP^0$ forces $u_i\neq0$. Taking $x=\mathbf 1$, $w$ then ranges over a dense subset of $\ker(C)$. So we must show that the generic rank of $C\diag(w)B^{\top}$ on $\ker(C)$ is $\iota(F)$.

Take an iterated circuit attaining $\iota(F)$, and $I$, $Y=\bigcup_i\mC_i\setminus I$ as in \Cref{lemma: mC}. After removing linearly dependent rows, the system given by the columns $Y$ of $C$ is generically consistent by \Cref{thm:mainTheorem}. By \Cref{thm:genericConsistencyOSkarElisendaBeatriz}, its matrix $C_I\diag(w')B^{\top}$ has rank $\iota(F)$ for some $w'$ in its kernel. Extending $w'$ to $\mathbb{C}^m$ by adding zeros outside the columns of $C_I$ lift it to some $w\in\ker(C)$. Writing $C$ as in \eqref{equation:contraction}, these rows appear among the rows of $C\diag(w)B^{\top}$. Then $\rank C\diag(w)B^{\top} \geq \iota(F)$.

We show that the rank of $ C\diag(w)B^{\top}$ is less or equal than $\iota(F)$ for all $w\in\ker(C)$ and any $C$, by induction on $m$. The smallest $m$ for which we can have $\mathcal{W}_F \neq \varnothing$ is $m = s + 1$ and we only have $\mathcal{W}_F \neq \varnothing$ when $[m]$ is the unique circuit of $M(C)$. In that case, the rank equals the dimension of the affine span of $\Delta_{[m]}$ i.e. $\iota(F)$. Moreover, we may delete zero columns and coordinates vanishing on $\ker(C)$, and assume $\iota(F)<s$.

\begin{enumerate}[label=(\roman*)]
\item Suppose two elements of $M(C)$ lie in exactly the same circuits, let $\mathcal A \subset [m]$ be the set of all elements lying in the same circuits as them, and fix $i\in\mathcal A$. By \Cref{lemm: sameCircuits}, there are $v_j\neq0$ with $w_j=v_jw_i$ for all $w\in\ker(C)$ and $j\in\mathcal A$. Let $\widehat F=\widehat C(u\star x^{\widehat B})$, where $\widehat C$ is obtained from $C$ by replacing the columns in $\mathcal A$ by the single column $\sum_{j\in\mathcal A}v_jc_j$, and $\widehat B$ is $B_{\mathcal A}$ with the extra column $\tau_{\mathcal A}(b_i)$. If $\sum_{j\in\mathcal A}v_jc_j = 0$ the new column is a loop of $M(\widehat C)$; deleting this column does not change $\ker(\widehat C)$ or $\iota(\widehat F)$, so we may assume that $\widehat C$ has no zero column. Since $\tau_{\mathcal A}(b_j)=\tau_{\mathcal A}(b_i)$ for $j\in\mathcal A$,
\[
  \sum_{k=1}^m w_k\,c_k\,\tau_{\mathcal A}(b_k)^{\top}=\widehat C\diag(\widehat w)\widehat B^{\top},\qquad \widehat w=\big((w_k)_{k\notin\mathcal A},w_i\big)\in\ker(\widehat C).
\]
The left-hand side is $C\diag(w)B^{\top}$ composed with $\tau_{\mathcal A}^{\top}$. As $\ker\tau_{\mathcal A}=L_{\mathcal A}$, the rank of $C\diag(w)B^{\top}$ is at most $\dim\Delta_{\mathcal A}$ plus the rank of this matrix, hence at most $\dim\Delta_{\mathcal A}+\iota(\widehat F)$ by induction. Note that $w\mapsto\widehat w$ is an isomorphism between $\ker(C)$ and $\ker(\widehat C)$, and the relations $w_j=v_jw_i$ for $j\in\mathcal A$ are preserved when we contract by a set not intersecting $\mathcal A$. Since the circuits of a represented matroid are the minimal supports of nonzero kernel vectors and, by \Cref{lemm: sameCircuits}, the support of every $w\in\ker(C)$ contains either all of $\mathcal A$ or none of it, the isomorphism $w\mapsto\widehat w$ carries $\supp(w)$ to the set obtained from it by replacing $\mathcal A$ by the new column, and hence induces a bijection between the circuits of $M(C)$ and those of $M(\widehat C)$. As the relations $w_j=v_jw_i$ persist after contracting by a set disjoint from $\mathcal A$, the same bijection holds for all the successive contractions. Therefore, the iterated circuits of $M(\widehat C)$ are the iterated circuits $\mC$ of $M(C)$ where we replace $\mathcal A$ by the new column. As $\tau_{\mathcal A}(b_j)=\tau_{\mathcal A}(b_i)$ for all $j\in\mathcal A$, the corresponding direction spaces of $\widehat F$ are $\tau_{\mathcal A}(L_{\mC_t})$, implying that such an iterated circuit has value
\[
\dim\tau_{\mathcal A}(\Sigma)=\dim(\Sigma+L_{\mathcal A})-\dim\Delta_{\mathcal A}, \quad \text{where } \Sigma\coloneqq\sum_tL_{\mC_t}.
\]
If some $\mC_i$ in $\mC$ contains $\mathcal A$, this is at most $\iota(F)-\dim\Delta_{\mathcal A}$. Otherwise, $\mathcal A$ lies in a further circuit $\mathcal{C}_{k + 1}$ of the contraction with
$$\dim(\Sigma+L_{\mathcal C_{k+1}})\ge\dim(\Sigma+L_{\mathcal A}).$$
Hence, $\iota(\widehat F)\le\iota(F)-\dim\Delta_{\mathcal A}$.

\item Suppose finally that no two elements lie in exactly the same circuits. By \Cref{lemm: sameCircuits}, the $m$ linear forms $L_j$ on $\ker(C)$ are nonzero and pairwise non-proportional for $j \in [m]$. For each $j \in [m]$, the hyperplane $T_j=\{w\in\ker(C):w_j=0\}$ is the kernel of the matrix obtained from $C$ by deleting the column $j \in [m]$, and for $w\in T_j$ the matrix $C\diag(w)B^{\top}$ is the one of the vertical system $F_j$ obtained from $F$ by deleting the column $j$. Every iterated circuit of the matroid of $F_j$ is an iterated circuit of $M(C)$, so $\iota(F_j)\le\iota(F)$. By induction, $\rank C\diag(w)B^{\top}\le\iota(F)$ for all $w\in T_j$. Now let $P$ be an $(\iota(F)+1)$-minor of $C\diag(w)B^{\top}$. As in the proof of \Cref{lemma: derivativeCircuit}, $P$ vanishes on every $T_j$. Therefore, the product of the $m$ pairwise non-proportional forms $L_j$ divides $P$. Since $\deg P=\iota(F)+1\le s<m$, we get $P=0$.
\end{enumerate}

\end{proof}

\begin{corollary}
\label[corollary]{prop: codimFlags}
Let $F=C(u\star x^B)$ be a vertical system with $ \rk(C)=s$ and $\mathcal W_F\neq\varnothing$. For a complete flag $\varnothing=E_0\subset E_1\subset\dots\subset E_\ell=[m]$ of flats of $M(D)$ with $r_D(E_i)=i$, let $s_1,\dots,s_\ell$ be as in \eqref{eq: si}. Then
\[
\codim\big(\overline{\pi(\mathcal W_F)}\big)=(m-n-1)-\max\,\dim\spann_{\RR}\langle s_1,\dots,s_{\ell-1}\rangle,
\]
where the maximum is taken over all such complete flags.
\end{corollary}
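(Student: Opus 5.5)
The plan is to combine \Cref{thm:mainCodimension} with the rank--nullity computation in the proof of \Cref{thm: GspanningFlag}, carried out for an arbitrary complete flag instead of only a $G$-spanning one. As in the Gale duality subsection, I assume that $\overline{B}$ has rank $n+1$. Then $G$ has rank $m-n-1$ and $\spann_{\RR}\langle g_j : j\in[m]\rangle=\RR^{m-n-1}$. Given \Cref{thm:mainCodimension}, it suffices to prove
\[
\iota(F)= s-(m-n-1)+\max\,\dim\spann_{\RR}\langle s_1,\dots,s_{\ell-1}\rangle ,
\]
with the maximum over complete flags of flats of $M(D)$.

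\emph{Step 1: from a complete flag to an iterated circuit.} Fix a complete flag $\varnothing=E_0\subset\dots\subset E_\ell=[m]$ and write $A_i\coloneqq E_i\setminus E_{i-1}$. By \Cref{prop: dualityIteratedFlags}, the blocks $A_i$ with $|A_i|\ge 2$, listed in decreasing order of $i$, form an iterated circuit $(\mathcal C_1,\dots,\mathcal C_k)$. The singleton blocks are the elements of the independent part. Consider the subspace
\[
U'\coloneqq\Big\{x\in\RR^m : \textstyle\sum_{j\in A_i}x_j=0 \text{ for } i=1,\dots,\ell\Big\}.
\]
On a singleton block this condition reads $x_j=0$, so $U'$ is exactly the space $U$ from the proof of \Cref{thm: GspanningFlag}. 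The $A_i$ partition $[m]$ into $\ell$ blocks, so $\dim U'=m-\ell=s$. As in that proof:
\begin{itemize}
\item $\overline B(U')\cong\sum_t L_{\mathcal C_t}$, so $\dim\overline B(U')=\dim\sum_t\Delta_{\mathcal C_t}$;
\item $x=Gy$ lies in $U'$ if and only if $\langle s_i,y\rangle=0$ for all $i$.
\end{itemize}
Since $G$ is injective, the second item gives $\dim(U'\cap\ker\overline B)=(m-n-1)-\dim\spann\langle s_1,\dots,s_\ell\rangle$. Rank--nullity then yields
\[
\dim\sum_t\Delta_{\mathcal C_t}=s-(m-n-1)+\dim\spann\langle s_1,\dots,s_{\ell}\rangle .
\]
By \Cref{rk: excludesl}, $\sum_i s_i=0$, so we may drop $s_\ell$ from the span. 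This shows that every complete flag gives the value of some iterated circuit, which proves the inequality "$\ge$" in the display above.

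\emph{Step 2: realizing $\iota(F)$ by a complete flag.} Take an iterated circuit attaining $\iota(F)$. If its union has rank less than $s$, keep appending circuits of the successive contractions until the rank is $s$. This is always possible because the contraction of a rank-positive part has no coloops, since $\mathcal W_F\neq\varnothing$ rules out coloops of $M(C)$ (as noted in the proof of \Cref{thm:mainCodimension}). Appending Minkowski summands cannot decrease the dimension of the sum, so the extended iterated circuit still attains $\iota(F)$. By \Cref{prop: dualityIteratedFlags}, it corresponds to a flag with $E_{\ell-k}$ independent. As in the proof of \Cref{thm: GspanningFlag}, this flag extends to a complete flag by adding a chain of independent sets inside $E_{\ell-k}$; the new blocks are singletons and do not change the iterated circuit. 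Applying Step 1 to this flag gives the inequality "$\le$".

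\emph{Main obstacle.} The delicate point is Step 2: checking that the extension to full rank is always possible. A secondary point is that the convention in Step 1 (singleton blocks impose $x_j=0$) really matches $U$ and contributes nothing to the Newton polytopes. Everything else is the linear algebra already done in the proof of \Cref{thm: GspanningFlag}.
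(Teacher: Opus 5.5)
Your proposal is correct and follows the paper's proof. You combine $\codim = s-\iota(F)$ with the rank--nullity identity $\dim\sum_t\Delta_{\mathcal C_t}=s-(m-n-1)+\dim\spann\langle s_1,\dots,s_\ell\rangle$ from the proof of \Cref{thm: GspanningFlag}, use the absence of coloops to realize $\iota(F)$ by a full-rank iterated circuit and hence by a complete flag, and drop $s_\ell$ because $\sum_i s_i=0$. One small correction: singleton blocks $E_i\setminus E_{i-1}$ can occur at any level of the flag, not only in the independent bottom part, but your definition of $U'$ imposes $x_j=0$ on every singleton block wherever it occurs, so the argument is unaffected.
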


\begin{proof}
By \Cref{thm:mainCodimension}, the codimension equals $s-\iota(F)$. Since $\mathcal W_F\neq\varnothing$, $M(C)$ has no coloops, so an iterated circuit whose union has rank smaller than $s$ can be extended by a circuit of the contraction with at least two elements. This does not decrease the dimension of the Minkowski sum, so $\iota(F)$ is attained by an iterated circuit whose union has rank $s$. For such an iterated circuit and the complete flag \eqref{eq: completeFlag}, the proof of \Cref{thm: GspanningFlag} shows that $\dim\big(\sum_i\Delta_{\mathcal C_i}\big)=s-\dim(U\cap\ker\overline B)$ and that $U\cap\ker\overline B$ is isomorphic to $\{y\in\RR^{m-n-1}:\langle s_i,y\rangle=0\text{ for all }i\}$. Hence
\[
\dim\Big(\sum_i\Delta_{\mathcal C_i}\Big)=s-(m-n-1)+\dim\spann_{\RR}\langle s_1,\dots,s_{\ell}\rangle.
\]
Conversely, as in the proof of \Cref{thm: GspanningFlag}, every complete flag arises in this way from an iterated circuit whose union has rank $s$, after omitting the sets $E_i\setminus E_{i-1}$ with one element. Finally, $\sum_is_i=\sum_jg_j=0$, so $s_\ell$ lies in the span of $s_1,\dots,s_{\ell-1}$. Taking the maximum over all complete flags proves the claim.
\end{proof}

\subsection{Defective toric varieties}

 \Cref{thm:mainCodimension} generalizes known results on defectivity of toric varieties. In particular, if $B \in \mathbb{Z}^{n \times m}$ is a matrix, consider the system
    \begin{equation}
    \label{eq: FB}
    F = (F_B,x_1\frac{\partial F_B}{\partial x_1},\dots,x_n\frac{\partial F_B}{\partial  x_n}), \quad F_B = \sum_{i = 1}^m u_ix^{b_i}.
    \end{equation}
    We note that the system $F = \overline{B}(u \star x^B)$ is vertical where $\overline{B} \in \ZZ^{(n + 1) \times m}$ and let $G \in \mathbb{Z}^{m \times \ell}$ be the Gale dual of $\overline{B}$. We say that the toric variety $X_B$ (as in \eqref{eq: toricVariety}) is \textit{defective} if $\codim (\overline{\pi(\mathcal{W}_F)}) > 1$. The following theorem recovers \cite[Theorem 18]{CURRAN2007115} and \cite[Theorem 3.8]{cattani2022nonsplittingflagsiteratedcircuits}, using our previous results.

    \begin{theorem}
\label{thm: dualDefects}
        Let $B \in \mathbb{Z}^{n \times m}$ be a matrix and $F$ be the vertical system in \eqref{eq: FB}. The following are equivalent:
        \begin{enumerate}[label=(\roman*)]
            \item $X_B$ is defective.
            \item For all iterated circuits  $\mathcal{C} = (\mathcal{C}_1,\dots,\mathcal{C}_{k})$ in $M(\overline{B})$, we have
            $$\dim\big(\sum_{i = 1}^{k} \Delta_{\mathcal{C}_i}\big) < n.$$
            \item All non-splitting flags in $M(G)$ have length $< \ell$.
            \item For every complete flag of flats in $M(G)$, the vectors $s_1,\dots,s_{\ell-1}$ in \eqref{eq: si} are linearly dependent.
        \end{enumerate}
    \end{theorem}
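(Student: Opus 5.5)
The plan is to deduce everything from \Cref{thm:mainCodimension} and \Cref{prop: codimFlags}, applied with $C=\overline{B}$, so $s=n+1$ (assuming, as usual, that $\overline{B}$ has rank $n+1$). Here $D=G$, $\ell=m-n-1$, and the Gale dual of $\overline{B}$ in the sense of the $g_j$ is again $G$. First we check that $\mathcal W_F\neq\varnothing$. The point $x=\mathbf 1$ lies in the incidence variety as soon as $u\in\ker(\overline B)$ has nonzero entries along each connected component, and such $u$ exist once we discard coloops of $M(\overline{B})$. A coloop corresponds to a vertex not lying on any affine dependence, i.e.\ a pyramid apex, and pyramids are handled by the standard reduction. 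We then get $\codim\overline{\pi(\mathcal W_F)}=n+1-\iota(F)$. Hence $X_B$ is defective if and only if $\iota(F)<n$, that is, if and only if every iterated circuit of $M(\overline B)$ has Minkowski sum of dimension $<n$. Note that $\iota(F)\le n$ automatically, since the $b_j$ lie in $\RR^n$. This gives (i)$\Leftrightarrow$(ii).

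For (i)$\Leftrightarrow$(iv) we use \Cref{prop: codimFlags}, where now $m-n-1=\ell$. The codimension equals $\ell-\max\dim\spann\langle s_1,\dots,s_{\ell-1}\rangle$. This is $>1$ exactly when, for every complete flag, the $\ell-1$ vectors $s_1,\dots,s_{\ell-1}$ span a space of dimension $<\ell-1$, i.e.\ are linearly dependent. Here the flats are flats of $M(D)=M(G)$, since $G$ plays both roles.

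For (iii)$\Leftrightarrow$(iv) we use the remark after \Cref{cor: nonsplitGspan}. In this setting $L^*_{E_i}=\spann\langle g_j:j\in E_i\rangle$ has dimension $r_G(E_i)=i$, since the matroid of the rows of $G$ is exactly the matroid $M(D)$. We also have $\spann\langle s_1,\dots,s_i\rangle\subseteq L^*_{E_i}$. So a complete flag is non-splitting if and only if $s_i\notin L^*_{E_{i-1}}$ for $i\le\ell-1$. Independence of $s_1,\dots,s_{\ell-1}$ forces $\spann\langle s_1,\dots,s_i\rangle=L^*_{E_i}$ by a dimension count, which gives non-splitting. The converse is immediate from the inclusion.

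It remains to match ``length $<\ell$'' with ``complete flags''. A non-splitting flag of length $\ell$ ends at a flat of rank $\ell$, namely $[m]$, so it is complete. Hence (iii) says that no complete flag is non-splitting, which is (iv).

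The main obstacle is the bookkeeping in the first step: justifying $\mathcal W_F\neq\varnothing$ and the absence of coloops, or reducing the coloop case to a lower-dimensional configuration. A second delicate point is checking that the projectivization over the connected components used in $\mathbb P^{\mathcal B}$ matches the classical definition of defectivity, which uses the dual variety in $\mathbb P^{m-1}$. I would handle the latter by noting that when $M(\overline B)$ is connected the two parameter spaces coincide. When it is not connected, $B$ is a join-type configuration: rescaling parameters per component is an extra torus symmetry, and it does not change the codimension of the closure of the image.
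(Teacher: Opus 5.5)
Your argument is the paper's proof. You get (i)$\Leftrightarrow$(ii) from \Cref{thm:mainCodimension} with $C=\overline B$ and $s=n+1$. You get (i)$\Leftrightarrow$(iv) from \Cref{prop: codimFlags} with $m-n-1=\ell$ and $D=G$. Finally, (iii)$\Leftrightarrow$(iv) follows from $\dim L^*_{E_i}=r_G(E_i)=i$, together with the fact that a flag of length $\ell$ is complete. All of these steps are correct.

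The one step that does not go through as written is the degenerate case where $M(\overline B)$ has a coloop $j$. There $\mathcal W_F$ really is empty: the component $\{j\}$ forces $u_j\neq 0$, while every kernel vector has $w_j=0$. So neither \Cref{thm:mainCodimension} nor \Cref{prop: codimFlags} applies. A ``reduction'' to the base of the pyramid is also not the right fix. The base can be non-defective while the pyramid always is defective here, so the conditions do not transfer.

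Instead, check the four conditions directly, as the paper does.
\begin{itemize}
\item (i) holds because $\pi(\mathcal W_F)=\varnothing$.
\item (ii) holds because $j$ remains a coloop in every contraction $M(X)$ with $j\notin X$, so no iterated circuit contains $j$. Hence every $\Delta_{\mathcal C_i}$ lies in the $(n-1)$-dimensional affine span of $\{b_i\}_{i\neq j}$, and the Minkowski sum has dimension $<n$.
\item (iii) and (iv) hold vacuously, because $g_j=0$ is a loop of $M(G)$, so there is no flag of flats starting at $E_0=\varnothing$.
\end{itemize}

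Your second worry, comparing $\mathbb P^{\mathcal B}$ with the classical $\mathbb P^{m-1}$, is unnecessary for this statement. The paper defines defectivity directly as $\codim\overline{\pi(\mathcal W_F)}>1$ inside $\mathbb P^{\mathcal B}$.
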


\begin{proof}
    The equivalence between (i) and (ii) follows from \Cref{thm:mainCodimension}. Here, $C=\overline{B}$, so $s=n+1$ and $\ell=m-n-1$, and we may take $D=G$. In particular, $M(D)=M(G)$ and $\dim L_E^*=r_G(E)$ for every $E\subseteq[m]$. If $\mathcal W_F=\varnothing$, then $M(G)$ has loops, so there are no complete flags with $E_0=\varnothing$, and (i), (iii) and (iv) hold. Otherwise, by \Cref{prop: codimFlags}, the codimension of $\overline{\pi(\mathcal W_F)}$ equals $\ell-\max\dim\spann_{\RR}\langle s_1,\dots,s_{\ell-1}\rangle$. So it is larger than one if and only if $s_1,\dots,s_{\ell-1}$ are linearly dependent for every complete flag, proving that (i) and (iv) are equivalent. Finally, a flag of length $\ell$ is complete, and since $\dim L_{E_i}^*=i$, the arguments in the proof of \Cref{cor: nonsplitGspan} show that a complete flag is non-splitting if and only if $s_1,\dots,s_{\ell-1}$ are linearly independent. This proves the equivalence between (iii) and (iv).
\end{proof}

\subsection{The resultant of a vertical system}

As a consequence of \Cref{thm:mainCodimension}, we can combinatorially describe the codimension of $\overline{\pi(\mathcal{W}_F)}$. This applies also to the case where this codimension equals $1$, which we can further analyze in the following theorem.

\begin{theorem}
	\label[theorem]{thm: codimensionOne}
	Let $F = C(u \star x^B)$ be a vertical system and assume that $F$ is not generically consistent and $\mathcal{W}_F \neq \varnothing$. The projection $\overline{\pi(\mathcal{W}_{F})}$ has codimension $1$ if and only if there exists $i \in [m]$ such that the system $F_{(i)} = C_{(i)}(u \star x^B)$ is generically consistent.
\end{theorem}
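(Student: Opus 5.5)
By \Cref{thm:mainCodimension}, $\codim \overline{\pi(\mathcal W_F)} = s-\iota(F)$. Since $F$ is not generically consistent, \Cref{thm:mainTheorem} gives $\iota(F)<s$. So the statement reduces to the following claim: $\iota(F)=s-1$ if and only if some $F_{(i)}$ is generically consistent. Here I read $C_{(i)}$ as the matrix \eqref{equation:contraction} representing the contraction $M(\{i\})$. It has rank $s-1$ and columns indexed by $[m]\setminus\{i\}$. Because $L_{\{i\}}=0$, the exponent matrix needs no projection. By \Cref{thm:mainTheorem} applied to $F_{(i)}$, generic consistency of $F_{(i)}$ is equivalent to the existence of an iterated circuit of $M(\{i\})$ whose Minkowski sum of Newton polytopes has dimension $s-1$. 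As in the proof of \Cref{thm:mainCodimension}, the assumption $\mathcal W_F\neq\varnothing$ ensures that $M(C)$ has no coloops.

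\emph{($\Leftarrow$)} I would mimic the lower-bound step in the proof of \Cref{thm:mainCodimension}.
\begin{enumerate}[label=(\roman*)]
\item Write $C$ in echelon form with $c_i$ as a pivot column. The last $s-1$ rows are then $(0\mid C_{(i)})$.
\item By \Cref{thm:genericConsistencyOSkarElisendaBeatriz}, there is $w'\in\ker C_{(i)}$ with $\rank C_{(i)}\diag(w')B'^{\top}=s-1$, where $B'$ is $B$ with column $i$ deleted.
\item Since $c_i$ is a pivot, $w'$ lifts to some $w\in\ker(C)$ by choosing the coordinate $w_i$ appropriately.
\item The last $s-1$ rows of $C\diag(w)B^{\top}$ are exactly $C_{(i)}\diag(w')B'^{\top}$, because $c_i$ vanishes in those rows.
\end{enumerate}
Hence the generic rank of $C\diag(w)B^{\top}$ on $\ker C$ is at least $s-1$. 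The proof of \Cref{thm:mainCodimension} identifies this generic rank with $\iota(F)$. Combined with $\iota(F)<s$, this gives $\iota(F)=s-1$.

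\emph{($\Rightarrow$)} Assume $\iota(F)=s-1$.
\begin{enumerate}[label=(\roman*)]
\item As in the proof of \Cref{prop: codimFlags}, the absence of coloops lets me pick an iterated circuit $(\mC_1,\dots,\mC_k)$ attaining $\iota(F)$ whose union has rank $s$. By the definition of iterated circuits, $\sum_t(|\mC_t|-1)=s$.
\item I then run the construction in the proof of \Cref{lemma: mC}. Choose $j_t\in\mC_t$ and $K_t\subseteq \mC_t\setminus\{j_t\}$ so that the vectors $b_a-b_{j_t}$ with $a\in K_t$ form a basis of the direction space of $\sum_t\Delta_{\mC_t}$. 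This basis has $s-1$ elements.
\item Since $\bigcup_t(\mC_t\setminus\{j_t\})$ is an $s$-element independent set, the set $I=\bigcup_t(\mC_t\setminus (K_t\cup\{j_t\}))$ consists of a single element $i$.
\item \Cref{lemma: mC} then says that the nonempty sets among $\mC_t\setminus\{i\}$ form an iterated circuit of $M(\{i\})$ whose union has rank $r_{\{i\}}=s-1$, which equals the dimension of the corresponding Minkowski sum.
\item Since contraction by a single element does not alter $B$, \Cref{thm:mainTheorem} applied to $F_{(i)}$ shows that $F_{(i)}$ is generically consistent.
\end{enumerate}

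The main obstacle is step (iv) of ($\Rightarrow$). I must check carefully that removing $i$ really produces an iterated circuit of $M(\{i\})$. I also need the Minkowski sum of the smaller polytopes $\Delta_{\mC_t\setminus\{i\}}$ still to have dimension $s-1$. This holds because those polytopes still contain all the chosen basis directions $b_a-b_{j_t}$. Finally, I must check that $F_{(i)}$, as defined in \eqref{eq: systemFI}, matches this contraction, including the possible removal of loop columns.
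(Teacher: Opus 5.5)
Your proposal is correct. The $(\Rightarrow)$ direction is essentially the paper's argument. You extend an optimal iterated circuit until its union has rank $s$, choose $j_t, K_t$ as in \Cref{lemma: mC}, and note that exactly one element $i$ of the basis $\bigcup_t(\mathcal{C}_t\setminus\{j_t\})$ lies outside $\bigcup_t K_t$; you then contract it.

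One small correction there. With $I=\{i\}$ no set $\mathcal{C}_t\setminus\{i\}$ becomes empty, but if $\mathcal{C}_{t_0}=\{i,j_{t_0}\}$ then $\mathcal{C}_{t_0}\setminus\{i\}=\{j_{t_0}\}$ is a loop of $M(\{i\})$. It must be dropped, which the paper does explicitly (``omitting it if it has one element''). This is harmless: in that case $K_{t_0}=\varnothing$, so no chosen direction $b_a-b_{j_t}$ is lost, and contracting a loop does not change the later circuits.

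For $(\Leftarrow)$ you take a genuinely different route. The paper argues directly on the incidence variety. After row reduction with respect to a basis containing $i$, $F$ is $F_{(i)}$ plus one extra equation, the only one involving $u_i$, and $u_i$ enters it linearly with coefficient $x^{b_i}\neq 0$. So for generic $(u_j)_{j\neq i}$ one takes a zero of $F_{(i)}$ and solves for $u_i$. Hence $\pi(\mathcal{W}_F)$ dominates the remaining coordinates, and the codimension is at most one.

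You instead lift a rank witness $w'\in\ker C_{(i)}$ to $w\in\ker C$ and read $C_{(i)}\diag(w')B'^{\top}$ off the bottom block of $C\diag(w)B^{\top}$. This shows the generic rank is at least $s-1$, and you conclude via the identification ``codimension $= s-{}$generic rank'' from the proof of \Cref{thm:mainCodimension}.

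The paper's version needs only the definition of generic consistency of $F_{(i)}$ and a dimension count. Its ``solve for $u_i$'' mechanism is also exactly what is reused in \Cref{corollary: parametersThatAppear}. Yours stays inside the Jacobian-rank framework and gives $\iota(F)\ge s-1$ directly, so the whole theorem becomes a statement about $\iota(F)$ alone. The price is dependence on the generic-smoothness/fibre-dimension step of \Cref{thm:mainCodimension}. Only that step is needed for codimension at most one, not the full identification of the generic rank with $\iota(F)$.
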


\begin{proof}
Suppose first that $\overline{\pi(\mathcal W_F)}$ has codimension $1$. By \Cref{thm:mainCodimension}, we have
$\iota(F)=s-1$. Choose an iterated circuit
$\mathcal C=(\mathcal C_1,\ldots,\mathcal C_k)$
such that
$\dim\left(\sum_{j=1}^k\Delta_{\mathcal C_j}\right)=s-1.$ Since $\mathcal W_F\neq\varnothing$, $M(C)$ has no coloops, so we may extend $\mathcal C$ until $r_C(\bigcup_j\mathcal C_j)=s$, and the dimension of the Minkowski sum remains $s-1$.

Choose $j_t\in\mathcal C_t$ and $K_t\subseteq\mathcal C_t\setminus\{j_t\}$ as in the proof of \Cref{lemma: mC}, so that $\sum_t|K_t|=s-1$. Then $\mathcal D=\bigcup_t(\mathcal C_t\setminus\{j_t\})$ is a basis of $M(C)$ with exactly one element $i\notin\bigcup_tK_t$, say $i\in\mathcal C_{t_0}$. As $\mathcal D$ is independent, contracting $i$ does not change $\mathcal C_1,\dots,\mathcal C_{t_0-1}$, and $\mathcal C_{t_0}\setminus\{i\}$ is a circuit of $M(\mathcal C_1\cup\dots\cup\mathcal C_{t_0-1}\cup\{i\})$. Omitting it if it has one element, we obtain an iterated circuit of $M(\{i\})$ whose Minkowski sum contains the vectors $b_a-b_{j_t}$, $a\in K_t$, and hence has dimension $s-1$. By \Cref{thm:mainTheorem}, $F_{(i)}$ is generically consistent.

Conversely, suppose that $F_{(i)}$ is generically consistent for some
$i\in[m]$. The system $F$ is obtained from $F_{(i)}$ by imposing one
additional equation, which is the only one containing $u_i$ when $C$ is written in row-reduced echelon form with respect to a basis containing $i$. So, for generic $(u_j)_{j\neq i}$ and a zero $x$ of $F_{(i)}$, this equation can be solved for $u_i$. Hence $\overline{\pi(\mathcal W_F)}$ has codimension at most 1. Since $F$ is not generically consistent,
$\overline{\pi(\mathcal W_F)}$ is a proper subvariety of the parameter
space, and therefore has positive codimension. It follows that
$\overline{\pi(\mathcal W_F)}$ has codimension $1$.
\end{proof}

As we know that $\overline{\pi(\mathcal{W}_F)}$ is irreducible, we may proceed with the following definition.
\begin{definition}
	\label[definition]{def: resultant}
	Let $F = C(u \star x^B)$ be a vertically parametrized system such that $s - \iota(F) = 1$. We call a \emph{resultant} $\Res_F$ of the vertical system $F$ to be any irreducible polynomial in $\mathbb{C}[u]$ that vanishes over $\overline{\pi(\mathcal{W}_F)}$ in $\mathbb{P}^{\mathcal{B}}$.
\end{definition}

We are aware that, in the terminology of \cite{gkz1994}, the sparse resultant is defined as a unique (up to sign) irreducible polynomial with coefficients in $\mathbb{Z}$ with the gcd of the coefficients being $1$. In our case, $\Res_F$ is defined up to scalar and the rest of statements in this section do not depend on this scalar. The following result describes the parameters on which $F$ depends, generalizing \cite[Corollary 1.1]{sturmfels94}.

\begin{corollary}
	\label[corollary]{corollary: parametersThatAppear}
	Let $F = C(u \star x^B)$ be a vertical system and assume that $\codim (\overline{\pi(\mathcal{W}_F)}) = 1$. For each $i \in \{1,\dots,m\}$, the following are equivalent:
	\begin{enumerate}[label=(\roman*)]
		\item $\Res_F \in \mathbb{C}[u_j : j \neq i]$;
		\item The system $F_{(i)}$ is generically inconsistent.
	\end{enumerate}
\end{corollary}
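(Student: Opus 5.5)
The plan is to prove both directions through one geometric relation between the incidence varieties $\mathcal W_F$ and $\mathcal W_{F_{(i)}}$. I work with affine parameters $u\in\CC^m$ (or on the dense open set where the relevant coordinates are nonzero). By \Cref{thm: connectedComponentsInvariant}, passing to $\PP^{\mathcal B}$ only rescales blocks, so it does not affect whether $\Res_F$ involves $u_i$.

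Write $C$ in row-reduced echelon form with respect to a basis $\mathcal D$ of $M(C)$ containing $i$. The row indexed by $i$ is then the only equation containing $u_i$, and it reads
\[
u_i x^{b_i} + \sum_{j\notin\mathcal D} P_{i,j}\, u_j x^{b_j}=0 .
\]
The remaining $s-1$ rows, after deleting column $i$, form the matrix $C_{\{i\}}$. These rows are exactly the equations of $F_{(i)}$, up to the monomial change $\tau_{\{i\}}$, which is harmless since $L_{\{i\}}=0$. So $(x,u)\mapsto (x,(u_j)_{j\neq i})$ maps $\mathcal W_F$ into $\mathcal W_{F_{(i)}}$. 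Conversely, every point of $\mathcal W_{F_{(i)}}$ lifts uniquely to $\mathcal W_F$ by solving the displayed equation for $u_i=-x^{-b_i}\sum_{j\notin\mathcal D}P_{i,j}u_jx^{b_j}$. Hence the two incidence varieties are isomorphic, and $\overline{\pi(\mathcal W_F)}$ is the closure of the graph of a rational function in $u_i$ over $\overline{\pi(\mathcal W_{F_{(i)}})}$. The key consequence is that the coordinate projection $\rho_i$ forgetting $u_i$ satisfies
\[
\overline{\rho_i(\pi(\mathcal W_F))}=\overline{\pi(\mathcal W_{F_{(i)}})}.
\]

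(ii)$\Rightarrow$(i): if $F_{(i)}$ is generically inconsistent, then by \Cref{thm:mainCodimension} and \Cref{thm:mainTheorem} the variety $\overline{\pi(\mathcal W_{F_{(i)}})}$ is a proper subvariety of the $u_{j\neq i}$ parameter space. Its dimension is at most $m-c-2$ (one less than the full parameter space), and $\overline{\pi(\mathcal W_F)}$ has dimension $m-c-1$. Both are irreducible. Therefore $\overline{\pi(\mathcal W_F)}$ must equal $\rho_i^{-1}\big(\overline{\pi(\mathcal W_{F_{(i)}})}\big)$, i.e. it is a cylinder in the $u_i$ direction. Its defining irreducible polynomial $\Res_F$ then lies in $\CC[u_j:j\neq i]$. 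To make this rigorous I will check that the cylinder is irreducible and of the right dimension, which is what forces equality.

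(i)$\Rightarrow$(ii): if $\Res_F$ does not involve $u_i$, the hypersurface $V(\Res_F)$ is a cylinder, so $\rho_i$ of it is a proper hypersurface in the remaining coordinates. By the displayed identity, $\pi(\mathcal W_{F_{(i)}})$ is then not dense. By \Cref{thm:mainCodimension} its codimension $(s-1)-\iota(F_{(i)})$ is therefore positive, so $\iota(F_{(i)})<s-1$, and \Cref{thm:mainTheorem} shows $F_{(i)}$ is not generically consistent. This direction is essentially the contrapositive of the argument used in the converse part of \Cref{thm: codimensionOne}.

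The main obstacle is bookkeeping rather than ideas. First, $\PP^{\mathcal B}$ for $F_{(i)}$ may have a different number of connected components than for $F$, since contracting $i$ can split a component. I will handle this by arguing throughout with affine cones over the open torus of parameters, where density and cylinder statements are unaffected. Second, I need to ensure $\mathcal W_{F_{(i)}}\neq\varnothing$ so that \Cref{thm:mainCodimension} applies. This follows from the isomorphism above together with $\mathcal W_F\neq\varnothing$, which holds because the codimension is $1$.
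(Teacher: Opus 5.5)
Your argument is correct and is essentially the paper's. Both rest on row-reducing $C$ with respect to a basis containing $i$, so that $F$ is $F_{(i)}$ plus one equation that is linear in $u_i$ with coefficient $x^{b_i}\neq 0$; your cylinder and dimension count for (ii)$\Rightarrow$(i) is a geometric rephrasing of the paper's observation that an irreducible $p\in\mathbb{C}[u_j:j\neq i]$ vanishing on $\overline{\pi(\mathcal W_{F_{(i)}})}$, together with a $\Res_F$ involving $u_i$, would cut out a set of codimension at least two.

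One aside is false but unused: $\overline{\pi(\mathcal W_F)}$ is not in general the closure of the graph of a rational function in $u_i$, since the solved value of $u_i$ depends on $x$, and in case (ii) you yourself show the closure is a cylinder. Only the identity $\overline{\rho_i(\pi(\mathcal W_F))}=\overline{\pi(\mathcal W_{F_{(i)}})}$ is needed, and it follows directly from the unique lift of points of $\mathcal W_{F_{(i)}}$.
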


\begin{proof}
We denote by $\mathcal{W}_i$ the incidence variety of $F_{(i)}$. If the system $F_{(i)}$ is generically inconsistent, then $\overline{\pi(\mathcal{W}_i)}$is not the whole parameter space. As a consequence,
	\begin{enumerate}[label=(\roman*)]
		\item There exists an irreducible polynomial $p \in \mathbb{C}[u_j : \: j \neq i]$ that vanishes in $\overline{\pi(\mathcal{W}_{i})}$, and hence on $\overline{\pi(\mathcal{W}_F)}$, since the equations of $F_{(i)}$ are among those of $F$ after row reduction with respect to the column $i$.
		\item As $\Res_F$ is irreducible, we have that the variable $u_i$ does not divide $\Res_F$.
	\end{enumerate}
	In particular, if $\Res_F \notin \mathbb{C}[u_j : \: j \neq i]$, then $\Res_F$ is a nonzero divisor in $\mathbb{C}[u]/\langle p\rangle$. As both $\Res_F$ and $p$ vanish on $\overline{\pi(\mathcal{W}_{F})}$, $\codim (\overline{\pi(\mathcal{W}_{F})}) \geq 2$, leading to a contradiction. Conversely, if $\Res_F \in \mathbb{C}[u_j : \: j \neq i]$, then $\overline{\pi(\mathcal{W}_{i})}$ is not the whole parameter space: otherwise, as in the proof of \Cref{thm: codimensionOne}, for generic $(u_j)_{j\neq i}$ there is $u_i$ leading to $u\in\pi(\mathcal W_F)$, and $\Res_F$ would vanish identically. This implies that the system $F_{(i)}$ is not generically consistent.
\end{proof}

\begin{example}
	Consider the following system:
	\[F=\{u_1 x y + u_2 x,u_3 z w + u_4 z,u_5 y + u_6 w\}.\]
	The corresponding $C$ and $B$ matrices are given by:
	\begin{equation}
		\begin{split}
			C = \begin{pmatrix}
				1 & 1 & 0 & 0 & 0 & 0 \\
				0 & 0 & 1 & 1 & 0 & 0\\
				0 & 0 & 0 & 0 & 1 & 1
			\end{pmatrix}, \quad B = \begin{pmatrix}
				1 & 1 & 0 & 0 & 0 & 0\\
				1 & 0 & 0 & 0 & 1 & 0 \\
				0 & 0 & 1 & 1 & 0 & 0 \\
				0 & 0 & 1 & 0 & 0 & 1
			\end{pmatrix}
		\end{split}
	\end{equation}
	The circuits of $M(C)$ are given by $\mC_1:=\{1,2\},\mC_2:=\{3,4\},$ and $\mC_3:=\{5,6\}$ and any iterated circuit of $M(C)$ is given by a subset of $\{\mC_1,\mC_2,\mC_3\}.$ From this it is easy to check
	$\iota(F)=2$ and $\codim(\overline{\pi(\mathcal{W}_{F})})=3-2=1.$ In fact, $\overline{\pi(\mathcal{W}_{F})}$ is generated by $u_2u_3u_5+u_1u_4u_6=0.$
\end{example}

To conclude this section, we record two situations in which the vertical
resultant can be obtained from familiar constructions. Determining the degree of $\Res_F$ requires a finer comparison between resultants and
eliminants (for instance, \cite{dandrea2020cannyemiris}). We leave
this question for future work.

First, suppose that $s=n+1$. Let $\Res_B$ denote the unmixed sparse
resultant associated with the exponent configuration given by the
columns of $B$. Thus, $\Res_B$ is the resultant of the system
\[
F_0=\cdots=F_n=0,
\qquad
F_i=\sum_{j=1}^m\bar u_{i,j}x^{b_j},
\qquad
i=0,\ldots,n.
\]
By \cite[Theorem~1.1]{dalbec1995introduction}, $\Res_B$ is the Chow
form of $X_B\subseteq\mathbb P^{m-1}$. In particular, there exists a
polynomial
\[
P_B\in\mathbb Z[t_I:
I\subseteq[m],\ |I|=n+1]
\]
such that
\begin{equation}
\label{eq: chowForm}
\Res_B=P_B\bigl([j_0\cdots j_n]\bigr), \quad [j_0\cdots j_n]
\coloneqq
\det
\begin{pmatrix}
\bar u_{0,j_0} & \cdots & \bar u_{0,j_n}\\
\vdots & & \vdots\\
\bar u_{n,j_0} & \cdots & \bar u_{n,j_n}
\end{pmatrix}.
\end{equation}

For $I\subseteq[m]$, we write $u_I\coloneqq\prod_{j\in I}u_j$.

\begin{proposition}
\label[proposition]{prop: sparseResultant}
Let $F=C(u\star x^B)$ be a vertical system with
$C\in\mathbb C^{(n+1)\times m}$ of rank $n+1$, and assume that
$\overline{\pi(\mathcal W_F)}$ has codimension one. If $P_B\bigl(\det(C_I)u_I\bigr)\neq0$ and $\deg(\Res_B)=\deg(\Res_F)$, then
\[
\Res_F=P_B\bigl(\det(C_I)u_I\bigr),
\]
up to a nonzero scalar, where $C_I$ denotes the submatrix of $C$ with columns indexed by $I \subset [m]$.
\end{proposition}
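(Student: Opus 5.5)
The idea is to realize the vertical system as a specialization of the unmixed sparse system and then read off the resultant from the Chow form formula \eqref{eq: chowForm}. Given $u$, set $\bar u_{i,j}\coloneqq C_{i,j}u_j$ for $i=0,\dots,n$ and $j\in[m]$. Then
\[
F_i=\sum_j \bar u_{i,j}x^{b_j}
\]
is exactly the $i$-th entry of $C(u\star x^B)$. So the vertical family is the pullback of the sparse family under the linear map $\psi\colon u\mapsto (C_{i,j}u_j)_{i,j}$. For each $I=\{j_0,\dots,j_n\}$, multilinearity of the determinant in the columns gives
\[
[j_0\cdots j_n]\circ\psi=\det(C_I)\,u_I.
\]
Hence $\Res_B\circ\psi=P_B\bigl(\det(C_I)u_I\bigr)$, and this is a nonzero polynomial by hypothesis.

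The next step is to show that $\Res_B\circ\psi$ vanishes on $\overline{\pi(\mathcal W_F)}$. Let $u\in\pi(\mathcal W_F)$ with $u$ in the torus. There is then $x\in\Tn$ with $C(u\star x^B)=0$, so the sparse system with coefficients $\psi(u)$ has a common root in $\Tn$. By the defining property of the sparse resultant, $\Res_B(\psi(u))=0$. By density this extends to the closure.

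Irreducibility of $\Res_F$ and the codimension-one hypothesis then give
\[
\Res_F \mid P_B\bigl(\det(C_I)u_I\bigr),
\]
because the zero set of the irreducible polynomial $\Res_F$ is $\overline{\pi(\mathcal W_F)}$ (working in the affine cone over $\mathbb P^{\mathcal B}$). Here one needs a little care with the projective parameter space. By \Cref{thm: connectedComponentsInvariant}, $\Res_F$ is multihomogeneous in the blocks $\mathcal B_i$, and $\Res_B$ is multihomogeneous in the rows, so vanishing on the cone is the same as vanishing on the projective locus.

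It remains to compare degrees. The hypothesis $\deg(\Res_B)=\deg(\Res_F)$ concerns the total degrees. Since $\psi$ is linear, substituting it does not raise the degree, so
\[
\deg\bigl(P_B(\det(C_I)u_I)\bigr)\le\deg(\Res_B)=\deg(\Res_F).
\]
Combined with the divisibility, the quotient must be a nonzero constant. For this the pulled-back polynomial must be nonzero, which is exactly the assumption $P_B(\det(C_I)u_I)\neq0$.

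I expect the delicate point to be the vanishing step. There I must make sure that a point of $\pi(\mathcal W_F)$ yields a root in the torus for the sparse system, which is immediate. I must also make sure that $\Res_B$, as the Chow form, vanishes whenever such a torus root exists, even if $\psi(u)$ lies in a special position (for example when some $u_j=0$). This holds because the resultant vanishes on the closure of the incidence projection. So I would prove the vanishing on the dense subset where all $u_j\neq0$ and conclude by continuity.
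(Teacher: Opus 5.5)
Your proposal is correct and follows essentially the same route as the paper. You specialize $\bar u_{i,j}=c_{i,j}u_j$ so that the sparse system becomes $F_u$, compute each bracket as $\det(C_I)u_I$ by multilinearity, deduce that $\Res_F$ divides $P_B\bigl(\det(C_I)u_I\bigr)$ by irreducibility, and conclude by the degree hypothesis; your extra remarks on closures and the projective parameter space only make explicit what the paper's short argument uses implicitly.
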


\begin{proof}
Let $u\in\mathbb C^m$ satisfy $\Res_F(u)=0$. Then $F_u$ has a solution
in $\mathbb T^n$. Consider the specialization
\[
\bar u_{i,j}\coloneqq c_{i,j}u_j.
\]
The resulting sparse system is precisely $F_u$, and hence
\[
\Res_B\bigl((c_{i,j}u_j)_{i,j}\bigr)=0.
\]
For $I=\{j_0,\ldots,j_n\}$, we have
\[
[j_0\cdots j_n]
=
\det
\begin{pmatrix}
c_{0,j_0}u_{j_0} & \cdots & c_{0,j_n}u_{j_n}\\
\vdots & & \vdots\\
c_{n,j_0}u_{j_0} & \cdots & c_{n,j_n}u_{j_n}
\end{pmatrix}
=
\det(C_I)u_I.
\]
Thus $\Res_F$ divides $P_B(\det(C_I)u_I)$. Since these two polynomials
have the same degree, they are equal up to a nonzero scalar.
\end{proof}

Secondly, we consider the case in which the corank
$\ell=m-s$ of $C$ is small. The following formulas extend the familiar
low-codimension formulas for $\mathcal A$-discriminants in
\cite[Chapter~9, Proposition~1.8]{gkz1994} and
\cite[Theorem~4.2]{dickenstein2002elimination}.

\begin{proposition}
\label[proposition]{prop: lowcodimensionComputation}
Let $F=C(u\star x^B)$ be a vertical system such that
$\overline{\pi(\mathcal W_F)}$ has codimension one.

\begin{enumerate}[label=(\roman*)]
\item Suppose that $\ell=1$. Let
$D \in \mathbb{C}^{m \times 1}$ be a Gale dual of $C$, and let $G \in \mathbb{Z}^{m \times 1}$
be a Gale dual of $\overline B$. Then,
\begin{equation}
\label{eq: ResFl1}
\Res_F=
\prod_{g_j>0}d_j^{g_j}
\prod_{g_j<0}u_j^{-g_j}
-
\prod_{g_j<0}d_j^{-g_j}
\prod_{g_j>0}u_j^{g_j}.
\end{equation}

\item Suppose that $\ell=2$, that $G=(g_{ji})\in\mathbb Z^{m\times2}$,
and that no two columns of
\[
D=
\begin{pmatrix}
d_{11} & \cdots & d_{1m}\\
d_{21} & \cdots & d_{2m}
\end{pmatrix}
\]
are proportional. For $i=1,2$, define
\begin{equation}
\label{eq: ResFl2}
h_i(y)
\coloneqq
\prod_{g_{ji}>0}(d_{1j}+d_{2j}y)^{g_{ji}}
\prod_{g_{ji}<0}u_j^{-g_{ji}}
-
\prod_{g_{ji}<0}(d_{1j}+d_{2j}y)^{-g_{ji}}
\prod_{g_{ji}>0}u_j^{g_{ji}}.
\end{equation}
Then there exists $e\in\mathbb Z_{>0}$ such that
\[
\Res_y(h_1,h_2)=\Res_F^e,
\]
up to multiplication by a nonzero scalar and a monomial in $u$.
\end{enumerate}
\end{proposition}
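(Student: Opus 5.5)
The plan is to use Gale duality (\Cref{thm:galeDualityVerticalSystems}) to replace the incidence problem for $F$ by the one for the dual system $H=(D(y)\star\frac1u)^G-\mathbf 1$. By \Cref{thm:galeDualityVerticalSystems}(i), for every $u\in(\mathbb C^*)^m$ the system $F_u$ has a zero in $\Tn$ if and only if $V_D(H_u)\neq\varnothing$. It therefore suffices to describe the closure of the set of $u$ for which $H_u$ has a solution $y\in\PP^{\ell-1}$ with all $\langle d_i,y\rangle\neq0$.

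\emph{Case $\ell=1$.} Here $\PP^{0}$ is a single point and $D$ is a column $(d_j)$. We may take $G$ with a single column, since codimension one together with $\ell=1$ forces $m-n-1=1$; if the rank of $G$ were larger we would obtain more independent conditions. \Cref{rk: equationsOfDual} then turns the condition $H_u=0$ directly into
\[
\prod_{g_j>0}(d_j/u_j)^{g_j}=\prod_{g_j<0}(d_j/u_j)^{-g_j}.
\]
Clearing denominators by $\prod_j u_j^{|g_j|}$ gives exactly the binomial \eqref{eq: ResFl1}. This binomial is irreducible because the gcd of the entries of $G$ is $1$ (\Cref{rk: equationsOfDual}), and the $d_j$ are nonzero since there are no coloops. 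Hence it generates the ideal of the hypersurface $\overline{\pi(\mathcal W_F)}$.

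\emph{Case $\ell=2$.} Work in the affine chart $y=(1,y)$ of $\PP^1$. After clearing denominators, the two equations of $H_u$ are $h_1(y)=h_2(y)=0$. The univariate resultant $\Res_y(h_1,h_2)$ vanishes exactly when $h_1,h_2$ have a common root in $\PP^1$, where roots at infinity are accounted for by the leading coefficients. I would proceed in three steps.

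First, show that the spurious common roots contribute only monomial factors. These are the points where some $d_{1j}+d_{2j}y=0$, or the point at infinity. Because no two columns of $D$ are proportional, each such point kills exactly one linear form $d_{1j}+d_{2j}y$. At that point each $h_i$ reduces to a single monomial in $u$ times a nonzero constant, unless $g_{ji}=0$, in which case that linear form does not appear in $h_i$ at all. Using the Poisson product formula $\Res_y(h_1,h_2)=\prod h_2(\text{roots of }h_1)$, I would argue that the contributions from these points are monomials in $u$. This is where the non-proportionality hypothesis enters.

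Second, for the remaining common roots, which are genuine points of $V_D(H_u)$, conclude that $\Res_y(h_1,h_2)$ vanishes on $\pi(\mathcal W_F)$ and, up to a monomial, only there. Hence its non-monomial part defines $\overline{\pi(\mathcal W_F)}$ set-theoretically. Since this set is irreducible of codimension one with ideal $\langle\Res_F\rangle$, the non-monomial part equals $\Res_F^e$ for some $e\ge1$.

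Third, show $\Res_y(h_1,h_2)\not\equiv0$. Otherwise $h_1,h_2$ would share a root for all $u$, which would make $F$ generically consistent and contradict codimension one.

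The main obstacle I expect is the first step of the $\ell=2$ case: tracking the behaviour at the zeros of the $d_{1j}+d_{2j}y$ and at infinity. This includes degree drops of $h_i$, which change the resultant by a power of a leading coefficient, itself a monomial in $u$. The goal there is to verify that nothing non-monomial appears besides the genuine solutions.
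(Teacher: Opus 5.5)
Your overall route is the paper's: Gale duality; for $\ell=1$ the single binomial, irreducible because the entries of $G$ have $\gcd$ one; for $\ell=2$ the Sylvester resultant of $h_1,h_2$ with the boundary points ruled out. Part (i) is fine.

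The gap is in your first step for $\ell=2$, which is where the content of the proof lies. You correctly observe that $h_a(y_j)$ is a nonzero constant times a monomial in $u$ when $g_{ja}\neq 0$. This shows that $y_j$ is never a common root over $(\mathbb{C}^*)^m$ as long as the row $g_j$ is nonzero. You leave open the case $g_{j1}=g_{j2}=0$, and there your claim that the contribution is a monomial is false. The vanishing linear form appears in neither $h_1$ nor $h_2$, so $h_1(y_j)$ and $h_2(y_j)$ are both binomials in $u$. Their coefficients are nonzero by non-proportionality of the columns of $D$, so $y_j$ is a common root for all $u$ in a nonempty subset of the torus.

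The missing idea is the paper's codimension argument. The exponent differences of these two binomials are the two columns of $G$, which are linearly independent, so this locus has codimension two. The zero set of $\Res_y(h_1,h_2)$ in $(\mathbb{C}^*)^m$ is pure of codimension one, so such a locus cannot be a component. Hence the only codimension-one component is $V(\Res_F)$. This set-theoretic purity argument is what must replace your Poisson-formula heuristic. The roots of $h_1$ move with $u$, so the product formula does not split $\Res_y(h_1,h_2)$ into polynomial factors attached to the fixed points $y_j$.

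Your treatment of the point at infinity is also wrong. The columns of $G$ lie in $\ker\overline B$ and $\overline B$ has a row of ones, so $\sum_j g_{ji}=0$. Hence the two products in $h_i$ have the same degree in $y$. When all $d_{2j}\neq 0$ (the paper's normalization), the leading coefficient of $h_i$ is the value of its homogenization at $[0:1]$. This is a binomial in $u$, not a monomial, and $[0:1]$ is a genuine point of $V_D(H_u)$ rather than a spurious one.

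To fix this, homogenize and treat $[0:1]$ like any other point of $\mathbb{P}^1$. It is a boundary point only if some $d_{2j}=0$, and then the analysis above applies to it. With these two corrections, your second and third steps go through as in the paper. This includes $\Res_y(h_1,h_2)\not\equiv 0$, which itself needs the fact that boundary common roots occur only in codimension two.
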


\begin{proof}
We treat the two cases separately.

\begin{enumerate}[label=(\roman*)]
\item Since $\ell=1$, the Gale-dual system consists of the binomial
equation
\[
\prod_{g_j>0}
\left(\frac{d_j}{u_j}\right)^{g_j}
-
\prod_{g_j<0}
\left(\frac{d_j}{u_j}\right)^{-g_j}
=0.
\]
After clearing denominators, this is the polynomial in
\eqref{eq: ResFl1}. The entries of $G$ have greatest common divisor one,
so this binomial is irreducible. Its zero set is the resultant
hypersurface, proving the claim.
\item Without loss of generality, we assume that $d_{2j}$ $\neq 0$ for all $j\in[m]$ and let $
y_j=-\frac{d_{1j}}{d_{2j}}$. The common zero set of $h_1$ and $h_2$ in
\[
\mathbb C\setminus\{y_1,\ldots,y_m\}
\]
is the zero set of the Gale-dual system. Hence, by Gale duality, its
projection to the parameter space is the resultant hypersurface
$V(\Res_F)$.

We claim that no boundary point $y_i$ contributes a hypersurface factor
to $\Res_y(h_1,h_2)$. Since no two columns of $D$ are proportional,
\[
d_{1j}+d_{2j}y_i\neq0
\qquad\text{for every }j\neq i.
\]
If $g_{ia}\neq0$ for some $a\in\{1,2\}$, then exactly one of the two
monomials defining $h_a(y_i)$ vanishes, whereas the other is nonzero.
Thus, $h_a(y_i)\neq0$. If $g_i=0$, then $h_1(y_i)$ and $h_2(y_i)$ are Laurent binomials in $u$
whose exponent differences are the two independent columns of $G$.
Their common zero locus has codimension two. Therefore the boundary
points $y_1,\ldots,y_m$ do not contribute a codimension-one component
to the Sylvester resultant. The zero locus of $\Res_y(h_1,h_2)$ in $(\CC^*)^m$ consequently has the unique
codimension-one component $V(\Res_F)$. Since $\Res_F$ is irreducible,
there exists $e\in\mathbb Z_{>0}$ such that
\[
\Res_y(h_1,h_2)=\Res_F^e
\]
up to multiplication by a nonzero scalar and a monomial in $u$. \qedhere
\end{enumerate}
\end{proof}

The framework introduced in
\cite{esterov2026engineeredcompleteintersectionsalgorithmic} for
generalizing mixed subdivisions to vertical systems may provide, in
future work, subdivision-based formulas for resultants of vertical systems
analogous to the known formulas for sparse resultants
\cite{cannyemiris93,dandrea2020cannyemiris}.
\subsection{Iterated circuits and linear degeneracy}
\label{subsection: linearDegeneracy}

One of the key motivations for studying these systems arises from reaction network theory. Vertically parametrized systems play a central role in studying the properties of these networks. Given a reaction network, the associated ODEs that describe the evolution of species in the network over time are given by parametrized polynomials. These polynomials give a vertical system. For these systems, we are often interested in determining if the dynamics are robust at the steady state for certain species, i.e. the solution of the system is contained in the hyperplane parallel to one of the coordinate hyperplanes. This property in the literature is called absolute concentration robustness (ACR). We refer to \cite{localglobal,feliu2025genericgeometrysteadystate, Kaihnsa_Nguyen_Shiu_2024} for further study of this property in the biological context.

Although in biochemistry, this property is only relevant to studying the variety over $\mathbb{R}_{>0}^n$ and when the hyperplanes are parallel to the coordinate axes, in this section, we give definitions and results in further generality. In particular, for a given vertically parametrized polynomial system, we are interested in studying when the variety $V_{\Tn}(F_{u}) \subset \mathbb{C}^n$ is contained in a finite union of parallel hyperplanes.

\begin{definition}
	\label[definition]{def: linearlyDegenerate}
	A vertically parametrized polynomial system $F = C(u \star x^B)$ is \emph{linearly degenerate} with respect to $x_i$ if there exist $a_1,\dots,a_k \in \mathbb{C}^*$ such that $V_{\Tn}(F_{u}) \subset \bigcup_{j = 1}^k \{x_i = a_j\}$ for generic values of $u \in \mathbb{C}^m$.
\end{definition}

For $i \in \{1,\dots,n\}$, let $B_i \in \mathbb{Z}^{(n-1) \times m}$ be the matrix $B$ after removing the $i$-th row and let $\tau_i: \mathbb{R}^n \xrightarrow{} \mathbb{R}^{n-1}$ be the projection $\tau_i(x_1,\dots,x_n) = (x_1,\dots,x_{i-1},x_{i+1},\dots,x_n)$. We consider the vertical system given by $\widehat{F}_i = C(\overline{u} \star x^{B_i})$, where $\overline{u}=(\overline{u}_1,\ldots,\overline{u}_m)\in \CC^m$. Let $\mathcal{W}_i$ be the incidence variety of $\widehat{F}_i$. If $\codim ( \overline{\pi(\mathcal{W}_i)}) = 1$, we consider the polynomial $\Res_{\widehat{F}_i}(\overline{u})$ vanishing on $\overline{\pi(\mathcal{W}_i)}$. We denote by \( R_i(x_i, u) \) the polynomial in $ \mathbb{C}[x_i^{\pm}, u_1, \dots, u_m]$ obtained from \( \operatorname{Res}_{\widehat{F}_i}(\overline{u}) \) via the substitution
\[
\overline{u}_j = u_j x_i^{b_{ij}}, \qquad j = 1, \dots, m.
\]
That is, we set
\begin{equation}
\label{eq:Ri}
R_i(x_i, u) := \Res_{\widehat{F}_i}\big(u_1 x_i^{b_{i1}}, \dots, u_m x_i^{b_{im}}\big),
\end{equation}
for fixed exponents \( b_{ij} \in \mathbb{Z} \). If $\codim(\overline{\pi(\mathcal{W}_i)}) > 1$, then we set $R_i \coloneqq 0$.

With this definition in place (\eqref{eq:Ri}), we prove the following result, which gives a criterion for linear degeneracy in the variable \( x_i \).

\begin{theorem}
\label[theorem]{thm: linearDegeneracy}
Let \(F=C(u\star x^B)\) be a generically consistent vertical system with
\(\operatorname{rank}(C)=s\). The following are equivalent:
\begin{enumerate}[label=(\roman*)]
    \item The vertically parametrized family \(F\) is linearly
    degenerate with respect to \(x_i\).

    \item For every iterated circuit
    \((\mathcal{C}_1,\dots,\mathcal{C}_k)\) of \(M(C)\),
    \[
    \dim\left(
        \sum_{j=1}^k\tau_i(\Delta_{\mathcal{C}_j})
    \right)<s.
    \]

    \item The polynomial \(R_i\) is nonzero and $\langle F\rangle\cap\CC(u)[x_i^{\pm}]=\langle R_i\rangle$.
\end{enumerate}
\end{theorem}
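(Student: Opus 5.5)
The plan is to route everything through the ``hidden variable'' system $\widehat F_i = C(\overline u\star x^{B_i})$ in the $n-1$ variables $x_j$, $j\neq i$. Its exponent vectors are exactly $\tau_i(b_j)$. So the polytopes appearing in (ii) are the Newton polytopes $\Delta_{\mathcal C}$ of $\widehat F_i$ for the same matroid $M(C)$, and $\iota(\widehat F_i)=\max_{\mathcal C}\dim\big(\sum_j\tau_i(\Delta_{\mathcal C_j})\big)$. Condition (ii) therefore says $\iota(\widehat F_i)<s$. By \Cref{thm:mainTheorem} this is the same as $\widehat F_i$ not being generically consistent, and by \Cref{thm:mainCodimension} it means $\overline{\pi(\mathcal W_i)}$ is a proper subvariety. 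Since $F$ is generically consistent, $\iota(F)=s$, and projecting by $\tau_i$ lowers the dimension of a Minkowski sum by at most one. Hence under (ii) we get $\iota(\widehat F_i)=s-1$, so $\codim\overline{\pi(\mathcal W_i)}=1$ and $R_i$ is defined and nonzero as a polynomial. This dichotomy, $\iota(\widehat F_i)\in\{s-1,s\}$, is the backbone of the argument.

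\textbf{(ii)$\Rightarrow$(iii).} For fixed generic $u$, a point $x\in V_{\Tn}(F_u)$ gives a solution $(x_j)_{j\neq i}$ of $\widehat F_i$ at the parameter $\overline u_j=u_jx_i^{b_{ij}}$. Therefore $R_i(x_i,u)=0$, and $R_i\in\langle F\rangle\cap\CC(u)[x_i^{\pm}]$ up to the usual radical and saturation issues. The substitution $\overline u\mapsto (u_jx_i^{b_{ij}})_j$ is dominant onto the parameter space of $\widehat F_i$ (with $x_i$ as an extra torus coordinate). So $R_i$ is a nonzero Laurent polynomial in $x_i$ over $\CC(u)$, and its zero set is exactly the projection of $V(F_u)$ to the $x_i$-axis.

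To prove that $R_i$ generates the elimination ideal, I would show two things:
\begin{itemize}
\item $R_i$ is irreducible over $\CC(u)$, since it is irreducible in $\CC[u,x_i^{\pm}]$ up to monomials, being the pullback of the irreducible $\Res_{\widehat F_i}$ under a monomial change of coordinates on a torus.
\item The elimination ideal is principal and nonzero, generated by the minimal polynomial of $x_i$ on the generic fibre.
\end{itemize}
Since $R_i$ vanishes on every component of the incidence correspondence and is irreducible, the generator must equal $R_i$ up to units. I expect this to be the main obstacle: irreducibility over $\CC(u)$ and the radical and multiplicity bookkeeping. I would handle it through the irreducibility of $\mathcal W_F$ together with the fact that $\Res_{\widehat F_i}$ is irreducible and reduced.

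\textbf{(iii)$\Rightarrow$(i).} This is immediate. For generic $u$, the polynomial $R_i(\cdot,u)$ is a nonzero Laurent polynomial with finitely many roots $a_1,\dots,a_k\in\CC^*$, and $V_{\Tn}(F_u)$ lies in $\bigcup_j\{x_i=a_j\}$.

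\textbf{(i)$\Rightarrow$(ii).} I argue by contrapositive. Suppose some iterated circuit has $\dim\sum\tau_i(\Delta_{\mathcal C_j})=s$. Then $\widehat F_i$ is generically consistent by \Cref{thm:mainTheorem}, so $\pi(\mathcal W_i)$ is dense. Using the dominance of the substitution map, for generic $u$ the set of $t\in\CC^*$ for which $\widehat F_i$ at $(u_jt^{b_{ij}})$ is solvable is Zariski dense in $\CC^*$. This set is the image of $V_{\Tn}(F_u)$ under $x\mapsto x_i$, so it cannot be finite, contradicting (i). The delicate point is the density claim, since the image of $t\mapsto(u_jt^{b_{ij}})$ is only a curve. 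I would instead use the incidence variety: if $x_i$ took finitely many values generically, the $x_i$-coordinate would be constant on the irreducible $\mathcal W_F$ in the appropriate sense. The toric rescaling that identifies $\mathcal W_F$ with $\CC^*_{x_i}\times\mathcal W_i$ then forces $\pi(\mathcal W_i)$ to be a proper subvariety, contradicting the generic consistency of $\widehat F_i$.
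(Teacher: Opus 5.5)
There is one genuine gap, in (ii)$\Rightarrow$(iii), at exactly the point you flagged. Your cycle (ii)$\Rightarrow$(iii)$\Rightarrow$(i)$\Rightarrow$(ii) is otherwise sound, and your (i)$\Rightarrow$(ii) argument via the isomorphism $\mathcal W_F\cong\mathcal W_i\times\mathbb{C}^*$ and a dimension count is correct.

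Knowing that $R_i$ vanishes on $\mathcal W_F$ only gives $R_i\in\sqrt{\langle F\rangle}$, and irreducibility of $\mathcal W_F$ only says that this radical is prime. Your phrase ``the minimal polynomial of $x_i$ on the generic fibre'' silently assumes that $\mathbb{C}(u)[x^{\pm}]/\langle F\rangle$ is a domain. As written, your argument proves only $\sqrt{\langle F\rangle\cap\mathbb{C}(u)[x_i^{\pm}]}=\langle R_i\rangle$, and it does not exclude a generator $R_i^e$ with $e>1$. The irreducibility and reducedness of $\Res_{\widehat F_i}$ cannot close this, since they concern $R_i$, not the ideal $\langle F\rangle$.

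The missing ingredient is that $\langle F\rangle$ is radical, indeed prime. The paper obtains radicality from the nondegeneracy of the solutions for generic $u$ \cite[Theorem~3.7]{FELIU2025630}. In your setting there is also a direct argument:
\begin{itemize}
\item After writing $C=(\Id_s\mid P)$, the monomials $x^{b_k}$ are units. So $\langle F\rangle$ is generated by $u_k+x^{-b_k}\sum_{j}P_{kj}u_{s+j}x^{b_{s+j}}$ for $k\in[s]$.
\item Hence $\mathbb{C}[u,x^{\pm}]/\langle F\rangle\cong\mathbb{C}[u_{s+1},\dots,u_m,x^{\pm}]$, which is a domain.
\item Since $\langle F\rangle\cap\mathbb{C}[u]=0$ by generic consistency, the extension to $\mathbb{C}(u)[x^{\pm}]$ and the contraction to $\mathbb{C}(u)[x_i^{\pm}]$ stay prime.
\end{itemize}
Then $g\mid R_i$, and irreducibility of $R_i$ over $\mathbb{C}(u)$ gives $\langle g\rangle=\langle R_i\rangle$. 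That irreducibility needs Gauss's lemma, plus the remark that $R_i$ is not of the form $a(u)x_i^k$, since such a polynomial could not vanish on $\mathcal W_F$ while $\pi(\mathcal W_F)$ is dense.

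With this repaired, your proof is a genuinely different and more self-contained route than the paper's. The paper imports (i)$\Leftrightarrow$``$\widehat F_i$ is generically inconsistent''$\Leftrightarrow$``the elimination ideal is generated by one nonconstant polynomial'' from \cite[Theorem~4.4]{feliu2025genericgeometrysteadystate}. With that, (i)$\Leftrightarrow$(ii) is immediate from \Cref{thm:mainTheorem}. It then identifies the generator with $R_i$ by comparing zero sets for generic $u$ and using squarefreeness of both polynomials. The zero-set comparison relies on $\overline{\pi(\mathcal W_i)}\setminus\pi(\mathcal W_i)$ having codimension at least two.

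You instead do three things differently:
\begin{itemize}
\item You reprove the needed direction of the cited theorem with the rescaling isomorphism.
\item You get codimension one from \Cref{thm:mainCodimension} together with the observation that $\tau_i$ lowers the dimension of a Minkowski sum by at most one. Applying that theorem requires $\mathcal W_i\neq\varnothing$, which follows from $\mathcal W_F\neq\varnothing$ by the same rescaling.
\item You identify the generator algebraically through irreducibility. This makes your unproved side claim, that the zeros of $R_i(\cdot,u)$ are exactly the $x_i$-values of $V_{\Tn}(F_u)$, unnecessary.
\end{itemize}
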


\begin{proof}
By \cite[Theorem~4.4]{feliu2025genericgeometrysteadystate}, the family
\(F\) is linearly degenerate with respect to \(x_i\) if and only if
\(\widehat{F}_i\) is generically inconsistent. By
\Cref{thm:mainTheorem}, this is equivalent to
\[
\dim\left(
    \sum_{j=1}^k\tau_i(\Delta_{\mathcal{C}_j})
\right)<s
\]
for every iterated circuit
\((\mathcal{C}_1,\dots,\mathcal{C}_k)\) of \(M(C)\). This proves the
equivalence of (i) and (ii). Moreover, \cite[Theorem~4.4]{feliu2025genericgeometrysteadystate}
shows that these conditions are equivalent to the elimination ideal
\(\langle F\rangle\cap\CC(u)[x_i^{\pm}]\) being generated by one nonconstant polynomial $g$. If (iii) holds, then $R_i$ is such a generator, as $\langle F\rangle\neq(1)$ since $F$ is generically consistent. Conversely, assume (i). For $x\in\Tn$, let $x'$ be $x$ without its $i$-th coordinate and $\overline u=(u_jx_i^{b_{ij}})_j$, so that $C(u\star x^B)=C(\overline u\star x'^{B_i})$. The map $(u,x_i)\mapsto\overline u$ has one-dimensional fibres, $V_{\Tn}(F_u)$ is nonempty for generic $u$, and $\widehat F_i$ is not generically consistent. Hence $\overline{\pi(\mathcal W_i)}$ has codimension one, and $R_i$ is well-defined. For generic $u$, the zeros of $R_i(\cdot,u)$ are exactly the values of $x_i$ on $V_{\Tn}(F_u)$, since the points of $\overline{\pi(\mathcal W_i)}$ outside $\pi(\mathcal W_i)$ form a subset of codimension at least two. Moreover, $R_i$ is squarefree, being the pullback of the irreducible polynomial $\Res_{\widehat F_i}$ under the smooth map $(u,x_i)\mapsto\overline u$, and $g$ is squarefree, since for generic $u$ the zeros of $F_u$ are nondegenerate \cite[Theorem~3.7]{FELIU2025630}, so that $\langle F\rangle$ is radical. Hence $R_i$ and $g$ agree up to a unit. This proves the equivalence with (iii).
\end{proof}

We illustrate with the following examples the computations of $R_i$ and how to identify the planes that contain the variety $V_{\Tn}(F_u)$. The following example considers the system corresponding to the smallest network that exhibits both non-degenerate multistationarity and ACR \cite[Example 4.10]{Kaihnsa_Nguyen_Shiu_2024}.
\begin{example}
\label[example]{ex: reactionNetwork}
We consider the following polynomial system:
\begin{align*}
F_1:=	&\frac{dx_1}{dt} ~=~
	-u_1x_1x_2 + u_2 x_3\\
F_2:=	&\frac{dx_2}{dt} ~=~
	-u_1x_1x_2 + 2u_3 x_3^2\\
F_3:=	&\frac{dx_3}{dt} ~=~
	2u_1x_1x_2 - u_2x_3 - 2u_3x_3^2.
\end{align*}
The system above describes an ODE system that models evolution of concentrations of some biochemical species $X_1,X_2,$ and $X_3$. In particular, the above system arises from the following network:
\[
	G~=~
	\{X_1+X_2 \xrightarrow{u_1} 2X_3, \quad X_3\xrightarrow{u_2} X_1 , \quad 2X_3\xrightarrow{u_3} 2X_2\}~.
	\]
    For more details on the reaction networks aspects, the reader is referred to \cite{dickenstein2016biochemical}.
Since there is a linear relation between $F_1,F_2,$ and $F_3$, we only consider the system given by $F:=\{F_1,F_2\}$. This system is linearly degenerate in $x_3$. Here, the $C$ and $B$ matrices are:
\begin{equation}
	\begin{split}
		C = \begin{pmatrix}
			-1 & 1 & 0 \\
			-1 & 0 & 2\\
		\end{pmatrix}, \quad B = \begin{pmatrix}
			1 & 0 & 0\\
			1 & 0 & 0\\
			0 & 1 & 2
		\end{pmatrix}
	\end{split}
\end{equation}
So the system $\widehat{F}_3$ is generated by the matrices $C$ and $B_3$ obtained by removing the third row in the matrix $B$. In this case, the system $\widehat{F}_3$ is given by $\{-\overline{u}_1x_1x_2+\overline{u}_2,-\overline{u}_1x_1x_2+2 \overline{u}_3\}$. The resultant of these two equations is given by $\overline{u}_2-2\overline{u}_3$. Hence, the polynomial $R_3(x_3,u)$ is $u_2 x_3 - 2 u_3 x_3^2$. This shows that the solution of $V_{\Tn}(F_u)$ is contained in the plane $x_3=\frac{u_2}{2u_3}$ for generic values of $u$.
\end{example}

\printbibliography[]

@article{ArdilaPostnikov2010,
  author    = {Ardila, F. and Postnikov, A.},
  title     = {Combinatorics and geometry of power ideals},
  journal   = {Trans. Amer. Math. Soc.},
  volume    = {362},
  number    = {8},
  pages     = {4357--4384},
  year      = {2010}
}

@article{bihan2021optimaldescartes,
  author  = {Bihan, F. and Dickenstein, A. and Forsg{\aa}rd, J.},
  title   = {Optimal {D}escartes' rule of signs for systems supported on circuits},
  journal = {Math. Ann.},
  volume  = {381},
  number  = {3-4},
  pages   = {1283--1307},
  year    = {2021}
}

@article{bender2021koszultype,
  author  = {Bender, M. R. and Faug{\`e}re, J.-C. and Mantzaflaris, A. and Tsigaridas, E.},
  title   = {Koszul-type determinantal formulas for families of mixed multilinear systems},
  journal = {SIAM J. Appl. Algebra Geom.},
  volume  = {5},
  number  = {4},
  pages   = {589--619},
  year    = {2021}
}

@inproceedings{cannyemiris93,
  author    = {Canny, J. F. and Emiris, I. Z.},
  title     = {An efficient algorithm for the sparse mixed resultant},
  booktitle = {Appl. Algebra Algebraic Algorithms Error-Correcting Codes},
  volume    = {673},
  pages     = {89--104},
  year      = {1993}
}

@techreport{cannypedersen,
  author      = {Canny, J. F. and Pedersen, P.},
  title       = {An algorithm for the {N}ewton resultant},
  institution = {Cornell University},
  address     = {USA},
  year        = {1993}
}

@article{cattani2022nonsplittingflagsiteratedcircuits,
  author  = {Cattani, E. and Dickenstein, A.},
  title   = {Non-splitting flags, iterated circuits, $\sigma$-matrices and {C}ayley configurations},
  journal = {Vietnam J. Math.},
  volume  = {50},
  pages   = {1--28},
  year    = {2022}
}

@article{mixeddiscriminants,
  author  = {Cattani, E. and Cueto, M. and Dickenstein, A. and Di Rocco, S. and Sturmfels, B.},
  title   = {Mixed discriminants},
  journal = {Math. Z.},
  volume  = {274},
  year    = {2011}
}

@inproceedings{checaemiris,
  author    = {Checa, C. and Emiris, I. Z.},
  title     = {A greedy approach to the {C}anny-{E}miris formula},
  booktitle = {Proceedings of ISSAC '22},
  pages     = {283--291},
  publisher = {ACM},
  address   = {New York, NY, USA},
  year      = {2022}
}

@misc{checa2025effectivecriterionmultiplepositive,
  author        = {Checa, C. and Feliu, E.},
  title         = {An effective criterion for multiple positive zeros of vertically parametrized polynomial systems},
  year          = {2025},
  howpublished  = {\texttt{arXiv:2512.07560}},
  eprint        = {2512.07560},
  archivePrefix = {arXiv}
}

@misc{checa2026toricvectorbundlesnumber,
  author        = {Checa, C.},
  title         = {Toric vector bundles and the number of zeros of vertical systems},
  year          = {2026},
  howpublished  = {\texttt{arXiv:2608.27019}},
  eprint        = {2608.27019},
  archivePrefix = {arXiv}
}

@book{coxlitosh,
  author    = {Cox, D. and Little, J. and O'Shea, D.},
  title     = {Using Algebraic Geometry},
  volume    = {185},
  publisher = {Springer},
  year      = {2015}
}

@article{CURRAN2007115,
  author  = {Curran, R. and Cattani, E.},
  title   = {Restriction of {A}-discriminants and dual defect toric varieties},
  journal = {J. Symbolic Comput.},
  volume  = {42},
  number  = {1},
  pages   = {115--135},
  year    = {2007}
}

@article{dandrea2002,
  author  = {D'Andrea, C.},
  title   = {{M}acaulay style formulas for sparse resultants},
  journal = {Trans. Amer. Math. Soc.},
  volume  = {354},
  year    = {2001}
}

@article{dandrea2025sparse,
  author  = {D'Andrea, C. and Jeronimo, G.},
  title   = {Sparse {N}ullstellensatz, resultants, and determinants of complexes},
  journal = {Int. Math. Res. Not.},
  volume  = {2025},
  number  = {12},
  pages   = {rnaf174},
  year    = {2025}
}

@article{dandrea2020cannyemiris,
  author  = {D'Andrea, C. and Jeronimo, G. and Sombra, M.},
  title   = {The {C}anny--{E}miris conjecture for the sparse resultant},
  journal = {Found. Comput. Math.},
  volume  = {23},
  pages   = {1--61},
  year    = {2022}
}

@incollection{dalbec1995introduction,
  author    = {Dalbec, J. and Sturmfels, B.},
  title     = {Introduction to {C}how forms},
  booktitle = {Invariant Methods in Discrete and Comput. Geom.},
  pages     = {37--58},
  publisher = {Kluwer Academic Publishers},
  address   = {Dordrecht},
  year      = {1995}
}

@article{precision,
  author  = {de Greef, T. and Masroor, S. and Peletier, M. and Pendavingh, R.},
  title   = {Precision and sensitivity in detailed-balance reaction networks},
  journal = {SIAM J. Appl. Math.},
  volume  = {76},
  pages   = {2123--2153},
  year    = {2016}
}

@incollection{dickenstein2016biochemical,
  author    = {Dickenstein, A.},
  title     = {Biochemical reaction networks: an invitation for algebraic geometers},
  booktitle = {Mathematical Congress of the Americas},
  series    = {Contemp. Math.},
  volume    = {656},
  pages     = {65--83},
  year      = {2016}
}

@article{iterateddiscriminants,
  author  = {Dickenstein, A. and Di Rocco, S. and Morrison, R.},
  title   = {Iterated and mixed discriminants},
  journal = {J. Comb. Algebra},
  volume  = {7},
  pages   = {45--81},
  year    = {2023}
}

@article{dickenstein2002elimination,
  author  = {Dickenstein, A. and Sturmfels, B.},
  title   = {Elimination theory in codimension two},
  journal = {J. Symbolic Comput.},
  volume  = {34},
  number  = {2},
  pages   = {119--135},
  year    = {2002}
}

@article{diroccodefect,
  author  = {Di Rocco, S.},
  title   = {Projective duality of toric manifolds and defect polytopes},
  journal = {Proc. London Math. Soc.},
  volume  = {93},
  year    = {2006}
}

@article{tropicaldiscriminants,
  author  = {Dickenstein, A. and Feichtner, E. M. and Sturmfels, B.},
  title   = {Tropical discriminants},
  journal = {J. Amer. Math. Soc.},
  volume  = {20},
  year    = {2005}
}

@misc{esterov2026engineeredcompleteintersectionsalgorithmic,
  author        = {Esterov, A. and Mohr, R. and Mukhina, Y.},
  title         = {Engineered complete intersections: algorithmic aspects},
  year          = {2026},
  howpublished  = {\texttt{arXiv:2607.23622}},
  eprint        = {2607.23622},
  archivePrefix = {arXiv}
}

@article{esterov2010newton,
  author  = {Esterov, A.},
  title   = {{N}ewton polyhedra of discriminants of projections},
  journal = {Discrete Comput. Geom.},
  volume  = {44},
  pages   = {96--148},
  year    = {2008}
}

@misc{esterov2024engineeredcompleteintersectionsslightly,
  author        = {Esterov, A.},
  title         = {Engineered complete intersections: slightly degenerate {B}ernstein--{K}ouchnirenko--{K}hovanskii},
  year          = {2024},
  howpublished  = {\texttt{arXiv:2401.12099}},
  eprint        = {2401.12099},
  archivePrefix = {arXiv}
}

@book{feinberg-book,
  author    = {Feinberg, M.},
  title     = {Foundations of Chemical Reaction Network Theory},
  publisher = {Springer},
  year      = {2019}
}

@misc{feliu2025toricinvarianceverticallyparametrized,
  author        = {Feliu, E. and Henriksson, O.},
  title         = {Toric invariance of vertically parametrized systems},
  year          = {2025},
  howpublished  = {\texttt{arXiv:2411.15134}},
  eprint        = {2411.15134},
  archivePrefix = {arXiv}
}

@misc{feliu2026rootboundsverticalsystems,
  author        = {Feliu, E. and Helminck, P. A. and Henriksson, O. and Ren, Y. and Schr{\"o}ter, B. and Telek, M. L.},
  title         = {Root bounds of vertical systems using tropical geometry},
  year          = {2026},
  howpublished  = {\texttt{arXiv:2605.07645}},
  eprint        = {2605.07645},
  archivePrefix = {arXiv}
}

@article{FELIU2025630,
  author  = {Feliu, E. and Henriksson, O. and Pascual-Escudero, B.},
  title   = {Generic consistency and nondegeneracy of vertically parametrized systems},
  journal = {J. Algebra},
  volume  = {677},
  pages   = {630--666},
  year    = {2025}
}

@misc{feliu2025genericgeometrysteadystate,
  author        = {Feliu, E. and Henriksson, O. and Pascual-Escudero, B.},
  title         = {The generic geometry of steady state varieties},
  year          = {2025},
  howpublished  = {\texttt{arXiv:2412.17798}},
  eprint        = {2412.17798},
  archivePrefix = {arXiv}
}

@article{10.1007/s10801-018-0816-4,
  author  = {Forsg{\aa}rd, J.},
  title   = {Defective dual varieties for real spectra},
  journal = {J. Algebraic Comb.},
  volume  = {49},
  number  = {1},
  pages   = {49--67},
  year    = {2019},
  doi     = {10.1007/s10801-018-0816-4}
}

@article{localglobal,
  author  = {Feliu, E. and Pascual-Escudero, B.},
  title   = {Local and global robustness at steady state},
  journal = {Math. Methods Appl. Sci.},
  volume  = {45},
  number  = {1},
  pages   = {359--382},
  year    = {2022}
}

@book{gkz1994,
  author    = {Gelfand, I. M. and Kapranov, M. M. and Zelevinsky, A. V.},
  title     = {Discriminants, Resultants, and Multidimensional Determinants},
  publisher = {Birkh{\"a}user},
  address   = {Boston, MA},
  year      = {1994}
}

@misc{helmincktropical,
  author        = {Helminck, P. A. and Henriksson, O. and Ren, Y.},
  title         = {A tropical method for solving parametrized polynomial systems},
  year          = {2024},
  howpublished  = {\texttt{arXiv:2409.13288}},
  eprint        = {2409.13288},
  archivePrefix = {arXiv}
}

@article{genericFlatness,
  author  = {Helminck, P. A. and Ren, Y.},
  title   = {Generic root counts and flatness in tropical geometry},
  journal = {J. Lond. Math. Soc.},
  volume  = {111},
  number  = {5},
  pages   = {e70171},
  year    = {2025}
}

@article{HoltzRon2011,
  author  = {Holtz, O. and Ron, A.},
  title   = {Zonotopal algebra},
  journal = {Adv. Math.},
  volume  = {227},
  number  = {2},
  pages   = {847--894},
  year    = {2011}
}

@article{Kaihnsa_Nguyen_Shiu_2024,
  author  = {Kaihnsa, N. and Nguyen, T. and Shiu, A.},
  title   = {Absolute concentration robustness and multistationarity in reaction networks: conditions for coexistence},
  journal = {Eur. J. Appl. Math.},
  volume  = {35},
  number  = {4},
  pages   = {566--600},
  year    = {2024}
}

@misc{kaveh2025vectorvaluedlaurentpolynomialequations,
  author        = {Kaveh, K. and Khovanskii, A. and Spink, H.},
  title         = {Vector-valued {L}aurent polynomial equations, toric vector bundles and matroids},
  year          = {2025},
  howpublished  = {\texttt{arXiv:2507.09793}},
  eprint        = {2507.09793},
  archivePrefix = {arXiv}
}

@article{khovanskiitoroidal,
  author  = {Khovanskii, A.},
  title   = {Newton polyhedra and toroidal varieties},
  journal = {Funct. Anal. Appl.},
  volume  = {11},
  pages   = {289--296},
  year    = {1977}
}

@article{Khovanskii2016NewtonPA,
  author  = {Khovanskii, A.},
  title   = {Newton polytopes and irreducible components of complete intersections},
  journal = {Izv. Math.},
  volume  = {80},
  pages   = {263--284},
  year    = {2016}
}

@inproceedings{mantzaflaris:hal-01528377,
  author    = {Mantzaflaris, A. and Schost, {\'E}. and Tsigaridas, E.},
  title     = {Sparse rational univariate representation},
  booktitle = {Proceedings of ISSAC '17},
  address   = {Kaiserslautern, Germany},
  year      = {2017}
}

@book{oxley,
  author    = {Oxley, J. G.},
  title     = {Matroid Theory},
  publisher = {Oxford University Press},
  address   = {USA},
  year      = {2006}
}

@misc{rodriguezpajares2025combinatorialdescriptionselfassociatedset,
  author        = {Rodr{\'i}guez-Pajares, G. and Ruano, D. and Salizzoni, F.},
  title         = {A combinatorial description of when a self-associated set of points fails to be arithmetically {G}orenstein},
  year          = {2025},
  howpublished  = {\texttt{arXiv:2512.16766}},
  eprint        = {2512.16766},
  archivePrefix = {arXiv}
}

@book{sottile2011real,
  author    = {Sottile, F.},
  title     = {Real Solutions to Equations from Geometry},
  series    = {Univ. Lecture Ser.},
  volume    = {57},
  publisher = {American Mathematical Society},
  address   = {Providence, RI},
  year      = {2011}
}

@article{sturmfels94,
  author  = {Sturmfels, B.},
  title   = {On the {N}ewton polytope of the resultant},
  journal = {J. Algebraic Combin.},
  volume  = {3},
  pages   = {207--236},
  year    = {1994}
}

\Addresses

\end{document}